\documentclass{article}

\usepackage{amsmath,amsfonts,bm, amsthm}

\def\eqref#1{equation~\ref{#1}}

\def\1{\bm{1}}

\DeclareMathAlphabet{\mathsfit}{\encodingdefault}{\sfdefault}{m}{sl}
\SetMathAlphabet{\mathsfit}{bold}{\encodingdefault}{\sfdefault}{bx}{n}

\usepackage{url}
\usepackage{amsthm}
\usepackage{amsmath}
\usepackage{amssymb}
\usepackage[colorlinks,linkcolor=violet, citecolor = violet]{hyperref}
\usepackage[nameinlink]{cleveref}
\usepackage{nameref}
\usepackage{mywidebar}
\usepackage{amsfonts}       
\usepackage{tikz-cd}
\usepackage{xcolor} 
\usepackage{graphicx}  
\usepackage{subcaption}  
\usepackage{multirow}
\usepackage{booktabs}
\usepackage{mathrsfs} 
\usepackage{natbib}
\usepackage{authblk}
\usepackage{dsfont}
\usepackage{verbatim}
\usepackage{graphicx} 
\usepackage{listings}
\usepackage{standalone}

\newcommand{\rank}{\operatorname{rank}}

\newcommand{\Sym}{\operatorname{Sym}}
\newcommand{\diag}{\operatorname{diag}}
\newcommand{\Res}{\operatorname{Res}}
\newcommand{\PP}{\mathbb{P}}

\newcommand{\CC}{\mathbb{C}}

\theoremstyle{definition}
\newtheorem{theorem}{Theorem}
\newtheorem{definition}[theorem]{Definition}
\newtheorem{proposition}[theorem]{Proposition}
\newtheorem{lemma}[theorem]{Lemma}
\newtheorem{corollary}[theorem]{Corollary}

\theoremstyle{definition}
\newtheorem{remark}[theorem]{Remark}
\newtheorem{example}[theorem]{Example}

\newcommand{\exampleqed}{\hfill $\triangle$}
\AtEndEnvironment{example}{\exampleqed}

\usepackage[margin=1in]{geometry}

\title{Algebraic Invariants of Lightning Self-Attention}
\author[1,4]{Yulia Alexandr}
\author[2]{Hao Duan}
\author[1,2,3]{Guido Montúfar}
\affil[1]{Department of Mathematics, UCLA}
\affil[2]{Department of Statistics \& Data Science, UCLA}
\affil[3]{Max Planck Institute for Mathematics in the Sciences, Leipzig}
\affil[4]{School of Mathematics and Statistics, University of Melbourne}
\date{}

\begin{document}

\title{Algebraic Invariants of Lightning Self-Attention}

\maketitle

\begin{abstract}
We study the polynomial coefficients of lightning self-attention as coordinates of an algebraic variety. In the single-token case, we characterize the coefficient variety as a rank-constrained Chow-type variety and derive algebraic equations from this description. For multiple tokens, linear relations reduce the nonlinear geometry to coefficients involving interactions between distinct tokens. We characterize the resulting variety by a common linear factor together with a low-rank condition on the remaining factors. This structure yields explicit families of determinantal, Veronese-type, and Sylvester resultant-based invariants. In the rank-one 
case,  natural pencil and flattening equations define the variety set-theoretically. We complement these results with computations in small dimensions, where the theoretical constructions recover the generators of the defining ideals. Overall, the paper gives a structural description of the algebraic constraints imposed on polynomial coefficient arrays by lightning self-attention and provides explicit certificates of non-realizability. 
\end{abstract}

\vspace{1em}
\noindent \textbf{Keywords}: lightning self-attention, algebraic invariants, implicitization, transformer architecture, determinantal constraints, Chow varieties, Veronese constraints, Sylvester resultants

\vspace{0.5em}
\noindent \textbf{2020 Mathematics Subject Classification}: 13P25, 68T07, 14M12, 15A69

\section{Introduction}

Self-attention is a central component of transformer architectures and, more broadly, modern sequence models. 
It maps a sequence of input vectors, called tokens, to a new sequence by combining information across positions, with weights determined by pairwise interactions between the input tokens and normalized through a softmax.
In this paper, we study the \emph{lightning self-attention} module, also called linear self-attention, obtained by replacing the softmax nonlinearity with the identity map. 
In this setting, the module becomes a cubic polynomial map in the input coordinates, whose coefficients are themselves cubic polynomials in the parameters, making it amenable to analysis with algebraic~methods. 

This polynomial structure leads to a natural problem: which coefficient arrays can be realized by a lightning self-attention module? 
Equivalently, what algebraic relations, within and across output coordinates, must be satisfied by the polynomial coefficients of the attention module? 
From the perspective of algebraic statistics and applied algebraic geometry, this is an implicitization problem for a structured parametrization. 
Algebraic invariants 
encode polynomial relations satisfied by the model and certify non-realizability. 
From the machine learning perspective, the same problem asks what structural constraints are built into the attention mechanism. 
Studying the corresponding algebraic invariants makes these constraints explicit, clarifies the expressive limits of linear self-attention, and provides potential tools for model comparison and verification. 

We treat the polynomial coefficients of lightning self-attention as coordinates of a parametrized algebraic variety, which we call the \textit{attention variety}. 
Our goal is to identify and organize the algebraic invariants satisfied by these coefficients. 
We show that this coefficient geometry is governed by linear relations together with structural factorization and low-rank constraints, which give rise to Chow-type, determinantal, Veronese-type, and Sylvester resultant-type relations. 
To illustrate the nature of these relations, \Cref{fig:quartic} shows a two-dimensional slice of one of the Veronese-type invariants arising in our analysis. 
Even in this small example, the geometry exhibits nontrivial higher-order dependencies among the coefficients, in this case of degree four. 

\begin{figure}[!ht]
    \centering
   \includegraphics[width=0.35\linewidth]{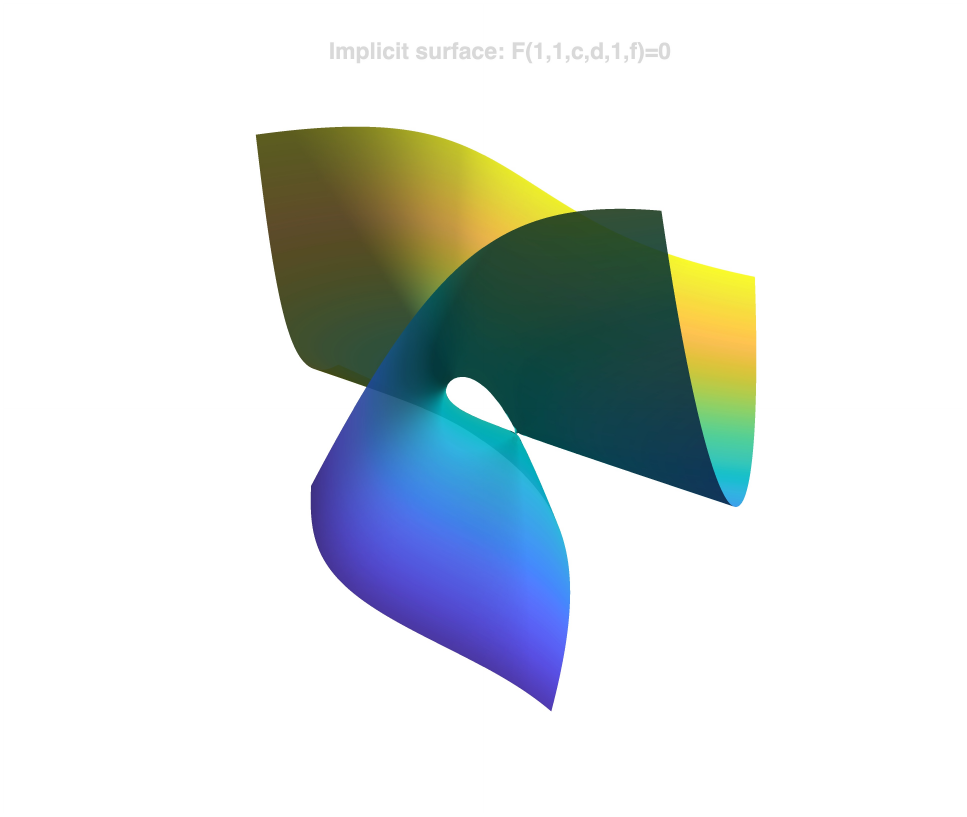}
    \caption{A two-dimensional slice of the Veronese-type quartic of the self-attention module in \Cref{ex:quartic}. 
    } 
    \label{fig:quartic}
\end{figure}

\vspace{-1em}
\subsection*{Main contributions}

This paper identifies mechanisms that force algebraic constraints on the coefficient space of lightning self-attention. 
The invariants 
arise from 
structural properties of the parametrization, especially factorization and symmetrization. 
Factorization appears in several forms: 
in the output polynomials themselves, 
in coefficient tensors and slices, 
and in auxiliary forms built from structured products and contractions of the attention and value parameters. 
In low-rank regimes, 
these factorizations also produce determinantal constraints. 
Symmetrization arises because polynomial coefficients forget the ordering of repeated input variables, while the attention map distinguishes target and context positions. 
%
These mechanisms lead to the linear, Chow-type, low-rank, Veronese-type, and resultant invariants studied below. 
We write $t$ for the sequence length, $d$ for the token dimension, and $a$ for the rank of the attention matrix.
The main contributions are as follows. 

\begin{itemize}

\item We formulate the coefficient characterization problem for lightning self-attention as an implicitization problem. Using scaled polynomial coefficients, we introduce the associated parametrization and its Zariski closure, which we call the \textit{attention variety}. 

\item We identify the linear structure of this variety. For each fixed output row, we prove sequence-copy relations across sequence positions and, 
for $t\geq2$, internal symmetrization relations within a fixed output coordinate, showing that every single-column coefficient is a linear combination of cross-column~coefficients.

\item We characterize the single-column and cross-column coefficient varieties. In the single-token case, the single-column cubics are exactly the products of a linear form and a quadratic form of rank at most $2a$. 
For $t\geq2$, a representative cross-column block is exactly a tuple of quadrics sharing a common linear factor, whose remaining linear factors span a space of dimension at most $a$. After imposing the linear relations, the attention variety is linearly isomorphic to this cross-column variety.

\item We derive families of nonlinear algebraic invariants from these structural descriptions. In the single-token case, we obtain Chow-type relations of split type $(2,1)$, together with additional rank constraints in the low-rank regime. For the cross-column coefficients, we obtain pencil and flattening constraints, determinantal syzygies, Veronese-type constraints, and quartic invariants arising from Sylvester resultants. In the rank-one cross-column case, the pencil and flattening equations cut out the cross-column variety set-theoretically.

\item We complement the theoretical results with reproducible \texttt{Macaulay2} computations for the attention coefficient parametrization. The code constructs the proposed invariants directly from the coefficient formulas, without first computing the full implicit ideal. In all small-dimensional cases where the full defining ideal can be computed for comparison, its minimal generators are explained by the theoretical constructions in this paper. The code and computations supporting the examples are available at \url{https://github.com/yuliaalexandr/algebraic-invariants-of-lightning-self-attention}. 

\end{itemize}

These results give a coefficient-space, or image-space, view of lightning self-attention. 
They reveal an interpretable algebraic structure in the coefficient geometry 
where factorization, low rank, and symmetrization constrain the model image inside the ambient polynomial space and create dependencies across coefficient slices. 
%
From the machine learning perspective, this makes the expressive constraints of the model explicit, giving certificates of non-realizability and suggesting tools for optimization in coefficient space, model comparison and exact verification. 


\subsection*{Related work}

Since the introduction of the transformer~\cite{vaswani2017attention}, attention mechanisms have become a central component of modern machine learning. A growing theoretical literature studies attention from a range of mathematical perspectives, including measure-theoretic formulations~\cite{vuckovic2020mathematical}, Lipschitz continuity and regularity~\cite{kim2021lipschitz}, interacting-particle and mean-field descriptions~\cite{geshkovski2025mathematical}, and spline-theoretic interpretations~\cite{lai2024attention}. 
Other recent works investigate the expressive power and limitations of self-attention and transformer architectures, including universality and approximation~\cite{Yun2020Are}, limitations of relative-position-based attention~\cite{NEURIPS2022_1ba5f641}, structural constraints on self-attention matrices~\cite{10.1609/aaai.v37i7.26055}, and learning-theoretic properties of a single attention layer~\cite{fu2023what}. 
Despite this progress, fundamental questions about optimization and the geometry of the functions realized by attention architectures remain open. 
Closest to the present work is the recent algebro-geometric study of lightning self-attention~\cite{henry2025geometry}, which investigates the parametrization from the point of view of identifiability, image dimension, and singularities. Our perspective is complementary: rather than studying fibers and dimension of the parametrization, we study the image in coefficient space and derive algebraic invariants satisfied by the realized polynomial maps. 

Algebraic invariants have long played an important role in algebraic statistics, particularly graphical models and phylogenetics, where polynomial relations encode structural constraints on the corresponding family of distributions or hypotheses~\cite{allman2003phylogenetic, allman2008phylogenetic, cavender1987invariants, cueto2010geometry, drton2007algebraic, GARCIA2005331, lake1987phylogenetic, seigal2017mixtures}. 
In machine learning, a growing body of work applies algebraic, geometric, and tropical methods to neural networks. 
This literature has addressed questions of expressivity \cite{alexandr2025constraining,finkel2025activation,kileel2019expressive,kubjas2024geometry,pmlr-v80-zhang18i}, 
identifiability 
\cite{allman2009identifiability,henry2025geometry,doi:10.1137/16M1077489,usevich2025identifiability}, 
and optimization 
\cite{brandenburg2024tropical,kohn2022geometry,kohn2024,Trager2019PureAS,venturi2019spurious}. 
This direction is sometimes referred to as \textit{neuroalgebraic geometry}; see~\cite{marchetti2025algebra} for a recent perspective.
Within this area, explicit algebraic invariants and defining equations for neural models have begun to be studied in their own right~\cite{alexandr2025constraining,grosdos2025algebraicgeometryrationalneural,shahverdi2025algebraic}. 
Recent work on ReLU networks~\cite{alexandr2025constraining}, for example, shows that the piecewise-multilinear dependence of the network outputs on the parameters imposes nontrivial polynomial constraints on the set of realizable outputs. 
The present paper develops this perspective for lightning~self-attention.

Beyond their intrinsic geometric interest, algebraic invariants can serve as certificates of non-realizability: any candidate coefficient array or output pattern that violates an invariant cannot be realized by the model. 
They also characterize the feasible set of model behaviors through structured algebraic constraints. 
This perspective has connections to formal verification~\cite{albarghouthi2021intro}, where the goal is to establish provable guarantees on network behavior under prescribed input or parameter perturbations. 
Existing verification methods exploit several forms of model structure: satisfiability and constraint-solving methods give exact certificates for piecewise-linear networks~\cite{ehlers2017formal,katz2017reluplex}, 
mixed-integer formulations encode verification as discrete optimization~\cite{tjeng2019mip}, 
abstract interpretation propagates sound bounds through the network~\cite{gehr2018ai2,singh2019deeppoly}, 
and convex relaxations together with branch-and-bound provide scalable certification algorithms~\cite{bunel2020bab,dvijotham2018dual,wang2021beta,wong2018convex}. %
Recent work~\cite{alexandr2026verification} formulates robustness verification as an algebraic distance minimization problem and studies architecture-dependent complexity measures such as the Euclidean distance degree. 
This points to a broader role for algebraic invariants in exact certification and in quantifying the intrinsic complexity of verification. From this perspective, identifying explicit invariants of attention mechanisms provides a step toward algebraic approaches to verification and robustness in transformer-based~models.

\subsection*{Organization of the paper}

\Cref{sec:setup} introduces lightning self-attention, the coefficient parametrization, and the attention variety. 
\Cref{sec:linear} studies the linear relations among the coefficients. \Cref{sec:single-column} turns to the single-column coefficients, characterizes the corresponding coefficient variety, and derives Chow-type relations together with additional determinantal constraints in the low-rank regime. 
\Cref{sec:cross-column} studies the cross-column variety and derives pencil, flattening, Veronese-type, and Sylvester resultant invariants from this structure. 
Throughout, we include examples illustrating the main constructions and results. 
Finally, \Cref{sec:summary} summarizes the structural results and families of invariants and discusses open problems.

\section{The attention variety of lightning self-attention}\label{sec:setup}

We begin with the standard self-attention construction and then specialize to
\emph{lightning self-attention}, obtained by taking the score transformation to be the identity map. 
This specialization is the one relevant for our
purposes, since it makes the attention map polynomial in the input coordinates and the parameters. 

\vspace{-0.5em}
\begin{definition}[Self-attention module]
Consider matrices $Q,K \in \mathbb{R}^{a \times d}$, $V \in \mathbb{R}^{d' \times d}$, and input tokens $x_i \in \mathbb{R}^d$, $i \in [t]$. 
%
%
The self-attention module associated with weights $W=(Q,K,V)$ is the map
\begin{align}
\varphi_W:\mathbb{R}^{d\times t} &\to \mathbb{R}^{d'\times t},\nonumber\\
(x_i)_{i\in[t]} \quad &\mapsto \quad 
\left(
\sum_{j\in[t]} [\sigma(s_i)]_j\, Vx_j
\right)_{i\in[t]},
\end{align}
where $s_i = ({x_j^\top K^\top Qx_i}/{\sqrt{a}} )_{j\in[t]}\in\mathbb{R}^t$ is the vector of \emph{attention scores} for input token $i\in[t]$, 
and $\sigma\colon \mathbb{R}^t\to\mathbb{R}^t$ is a score transformation.

\end{definition} 

Thus, the $i$th output token is a weighted sum of the \emph{value vectors} $Vx_j$, $j \in [t]$, with attention weights 
$[\sigma(s_i)]_j$, $j \in [t]$. 
In standard self-attention, $\sigma$ is the softmax map.

\subsection{Lightning self-attention}

Lightning self-attention is obtained by replacing the softmax map with the
identity. 
The factor $1/\sqrt{a}$ in the attention scores can be absorbed into $Q$ or $K$, and we omit it throughout the remainder of the paper. 
%
%
Let $X = [x_1 \ \cdots \ x_t] \in \mathbb{R}^{d \times t}$
be the matrix whose columns are the input tokens, 
and define the \emph{attention matrix} 
$A = K^\top Q \in \mathbb{R}^{d \times d}$, $\rank(A)\leq a$. 
Then the lightning self-attention module takes the form 
\[
\varphi_W(X) = VX(X^\top A X) \in \mathbb{R}^{d' \times t}. 
\]
Thus each output coordinate of $\varphi_W(X)$ is a homogeneous cubic polynomial in the entries of $X$, with coefficients depending polynomially on the parameters $W=(Q,K,V)$. Our goal is to characterize the algebraic relations satisfied by these coefficients. 
The following proposition makes this polynomial structure explicit. 

\begin{proposition}
\label{prop:param-ij}
For any $i\in[d']$ and $j\in[t]$, the $(i,j)$th output coordinate is given by
\[
\varphi_W(X)[i,j]
=
\sum_{k,m,l\in[d],\,n\in[t]}
(a_{ml}v_{ik})\,x_{kn}x_{mn}x_{lj}.
\]
\end{proposition}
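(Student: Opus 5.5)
The plan is to compute the $(i,j)$ entry of the matrix product $\varphi_W(X) = VX(X^\top A X)$ directly, by expanding each factor in coordinates. Write $V = (v_{ik})$, $A = (a_{ml})$ and $X = (x_{kn})$, with $k,m,l \in [d]$ and $n \in [t]$. The triple product is associative, so we may group it as $(VX)\cdot(X^\top A X)$. This is the natural choice, because $VX$ collects the value vectors and $X^\top A X$ is the $t\times t$ matrix of attention scores. Its $(n,j)$ entry is $s_j$'s $n$th component, namely $x_n^\top A x_j$ (recalling that $A = K^\top Q$, so $x_n^\top K^\top Q x_j$ is exactly the score with the $\sqrt a$ dropped).

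\textbf{Step 1.} Compute $(VX)[i,n] = \sum_{k\in[d]} v_{ik}x_{kn}$.

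\textbf{Step 2.} Compute $(X^\top A X)[n,j] = \sum_{m,l\in[d]} x_{mn}\,a_{ml}\,x_{lj}$.

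\textbf{Step 3.} Form the product:
\[
\varphi_W(X)[i,j] = \sum_{n\in[t]} (VX)[i,n]\,(X^\top A X)[n,j] = \sum_{n\in[t]}\Bigl(\sum_{k} v_{ik}x_{kn}\Bigr)\Bigl(\sum_{m,l} x_{mn}a_{ml}x_{lj}\Bigr).
\]
Distributing these finite sums and collecting the scalar parameters $a_{ml}v_{ik}$ in front gives the stated expression.

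As a consistency check, I would also confirm that this agrees with the definition of $\varphi_W$ when $\sigma$ is the identity. The $j$th output column is then $\sum_n [s_j]_n V x_n$, and its $i$th entry is $\sum_n (x_n^\top A x_j)\sum_k v_{ik}x_{kn}$, which is the same sum. This identifies the matrix form $VX(X^\top AX)$ with the defining formula.

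There is no real obstacle here. The only points needing care are index bookkeeping: the transpose in $X^\top$ must turn the row index of $X^\top$ into the column index $n$ of $X$, and the token index $n$ must be shared between the value factor and the left factor of the score, while $j$ stays attached to the right factor $x_{lj}$.
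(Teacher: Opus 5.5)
Your proof is correct and is essentially the paper's argument: both expand the triple product entrywise. The only cosmetic difference is that you group it as $(VX)(X^\top AX)$, while the paper first computes $X(X^\top AX)$ and then multiplies by $V$; your extra check against the defining formula with $\sigma$ the identity is a step the paper leaves implicit.
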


\begin{proof}
Since $\varphi_W(X)=VX(X^\top AX)$, we have
\[
\varphi_W(X)[i,j]
=
\sum_{k\in[d]} v_{ik}\bigl(X(X^\top AX)\bigr)_{k,j}.
\]
Now
\[
(X^\top AX)_{n,j}
=
\sum_{m,l\in[d]} x_{mn}a_{ml}x_{lj},
\]
so
\[
\bigl(X(X^\top AX)\bigr)_{k,j}
=
\sum_{n\in[t]} x_{kn}(X^\top AX)_{n,j}
=
\sum_{m,l\in[d],\,n\in[t]} a_{ml}x_{kn}x_{mn}x_{lj}.
\]
Multiplying by $v_{ik}$ and summing over $k$ gives the result.
\end{proof}

In the notation above, the index $j$ labels the output token being computed, so we refer to it as the \emph{target position}. 
In a mixed monomial of the form $x_{kn}x_{mn}x_{lj}$ with $n \ne j$, the index $n$ labels another input token contributing to that output, and we call it a \emph{context position}. 
We use this terminology because our arguments track the roles of sequence positions, represented by the columns of the input matrix $X$, 
in contrast to the \emph{query}, \emph{key}, and \emph{value} terminology associated with the parameter matrices $Q$, $K$, and $V$.

For fixed $(i,j)$, \Cref{prop:param-ij} shows that only monomials of the form
$x_{kn}x_{mn}x_{lj}$ can appear. In the next subsection, we organize these
monomials and introduce coefficient coordinates for the resulting cubic
expansions.

\subsection{Monomials and coefficient coordinates}
Our main objective is to determine the algebraic relations satisfied by the coefficients of the lightning self-attention map. By \Cref{prop:param-ij}, each output coordinate $\varphi_W(X)[i,j]$ is a homogeneous cubic polynomial in the entries of $X$, with coefficients depending polynomially on the weight matrices $W=(Q,K,V)$.

To study these relations, we first describe the monomial support of the output polynomials. For each output coordinate $(i,j)\in[d']\times[t]$, let $\mathcal M_{i,j}$ denote the set of cubic monomials that appear in $\varphi_W(X)[i,j]$,~namely
\begin{equation}\label{eq:monomial-supports}
    \mathcal M_{i,j}=\{x_{kn}x_{mn}x_{lj}\mid k,m,l\in[d],\ n\in[t]\}.
\end{equation}

\begin{proposition}
\label{prop:column-support}
The monomial supports $\mathcal M_{i,j}$ satisfy the following properties:
\begin{itemize}
    \item $\mathcal M_{i,j}=\mathcal M_{i',j}$ for all $i,i'\in[d']$ and $j\in[t]$.
    \item $\mathcal M_{i,j}\cap \mathcal M_{i',j'}=\varnothing$ for all $j\neq j'$ and all $i,i'\in[d']$.
\end{itemize}
\end{proposition}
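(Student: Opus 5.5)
The first item is immediate from the definition in \eqref{eq:monomial-supports}: the set $\mathcal M_{i,j}=\{x_{kn}x_{mn}x_{lj}\mid k,m,l\in[d],\ n\in[t]\}$ is described without any reference to the index $i$. Hence $\mathcal M_{i,j}=\mathcal M_{i',j}$ for all $i,i'\in[d']$. (If one instead reads $\mathcal M_{i,j}$ as the monomials with nonzero coefficient for generic $W$, the same holds, because by \Cref{prop:param-ij} the coefficient of a monomial in row $i$ is a sum of products $a_{ml}v_{ik}$. The pattern of which monomials receive a nonzero contribution does not depend on $i$ for generic parameters.) The substance is therefore the second item. Since the sets depend only on $j$, it suffices to show $\mathcal M_{\cdot,j}\cap\mathcal M_{\cdot,j'}=\varnothing$ for $j\neq j'$.

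For this I will attach to each cubic monomial a column-count invariant. For a monomial $\mu$ in the entries of $X$ and each $n\in[t]$, let $c_n(\mu)$ be the total degree of $\mu$ in the variables $x_{1n},\dots,x_{dn}$ of column $n$. For $\mu=x_{kn}x_{mn}x_{lj}\in\mathcal M_{\cdot,j}$ there are two cases:
\begin{itemize}
\item if $n=j$, then $c_j(\mu)=3$ and every other column count is $0$;
\item if $n\neq j$, then $c_n(\mu)=2$, $c_j(\mu)=1$, and every other column count is $0$.
\end{itemize}
In either case, $j$ is recovered from $\mu$ as the unique column index $p$ with $c_p(\mu)$ odd, and it is unique because the counts sum to $3$ and take the values $(3)$ or $(2,1)$.

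This parity argument finishes the proof. Suppose $\mu\in\mathcal M_{\cdot,j}\cap\mathcal M_{\cdot,j'}$. Then $j$ and $j'$ are both the unique column with odd degree in $\mu$, so $j=j'$. Because monomials are commutative, one should not try to match the factors of $\mu$ positionally. The invariant $c_n$ sidesteps this, since it depends only on the multiset of variables in $\mu$.

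The only point I regard as a possible obstacle is this commutativity subtlety. A naive argument that tries to identify "the third factor" of $\mu$ is ill-defined when, for example, $n=j$ and the factors coincide. The column-degree parity argument avoids the issue, so I expect the whole proof to be short.
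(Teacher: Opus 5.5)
Your proof is correct and follows essentially the same route as the paper. The first item is read off from the definition, and for the second the paper likewise notes that the column-index multiset of $x_{kn}x_{mn}x_{lj}$ is $\{n,n,j\}$, so the monomial determines $j$; your ``unique column of odd degree'' is a uniform way of stating the paper's two-case observation (``the index appearing once, or the only index present'').
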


\begin{proof}
The first statement is immediate from the definition of $\mathcal M_{i,j}$ in (\ref{eq:monomial-supports}) which does not depend on $i$.

For the second statement, every monomial in $\mathcal M_{i,j}$ has the form
$x_{kn}x_{mn}x_{lj}$, so its column-index multiset is $\{n,n,j\}$. Thus $j$
is either the unique column index appearing once, or else all three column
indices are equal to $j$. In either case, the column $j$ is determined uniquely
by the monomial. Therefore a monomial cannot lie in both $\mathcal M_{i,j}$ and
$\mathcal M_{i',j'}$ when $j\neq j'$.
\end{proof}

\begin{remark}\label{rmk:reduction-d'=1}
By \Cref{prop:param-ij}, for each fixed output row $i\in[d']$, the
coefficients in that row depend on the attention matrix $A$ and on the $i$th
row of $V$ only. In particular, the coefficient parametrization has the same
form for every output row, up to relabeling the 
rows of $V$. 

Motivated by this observation, we focus primarily on \emph{rowwise
invariants}, i.e., polynomial relations among coefficient variables within a single fixed output row. 
Since every row is governed by the same parametrization, it is enough to derive these relations for $d'=1$ and then restore the row index~$i$. 
In \Cref{sec:linear}, we further reduce to a single fixed output coordinate. \Cref{app:cross-row} briefly discusses cross-row~invariants. 
\end{remark}

The monomial support of lightning self-attention 
occupies only a small subset of the cubic monomials in the variables $x_{ij}$. 
The next proposition makes this sparsity explicit by counting the monomials that can occur within a single output coordinate, and across the entire output matrix.

\begin{proposition}\label{prop:monomial-count}
Let $N_\varphi(d,t)$ denote the total number of distinct cubic monomials
appearing across all output coordinates of $\varphi_W(X)$. Then
\[
N_\varphi(d,t)=\frac{1}{6}dt(d+1)(3dt-2d+2).
\]
The monomial supports associated with distinct target positions $j$ are disjoint and have the same cardinality, while for each fixed target position $j$ the support is identical across output rows $i$. Hence,
each fixed output coordinate $\varphi_W(X)[i,j]$ contains
\[
\frac{N_\varphi(d,t)}{t}
=
\frac{1}{6}d(d+1)(3dt-2d+2)
\]
distinct monomials. 
Since the total number of cubic monomials in the polynomial ring
$\mathbb{R}[x_{ij}]_{i\in[d],\,j\in[t]}$ is
\[
N(d,t)=\binom{dt+2}{3}=\frac{dt(dt+1)(dt+2)}{6},
\]
we have that for fixed $d$,
\[
\frac{N_\varphi(d,t)}{N(d,t)}
=
\frac{(d+1)(3dt-2d+2)}{(dt+1)(dt+2)}
=
\frac{3(d+1)}{d}\frac{1}{t}
+
O\left(\frac{1}{t^2}\right)
\qquad\text{as } t\to\infty.
\]
\end{proposition}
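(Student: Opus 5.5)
The plan is to count $|\mathcal M_{i,j}|$ for a single fixed target position $j$ directly, and then use \Cref{prop:column-support} to pass to the whole output matrix. By the first item of \Cref{prop:column-support}, $\mathcal M_{i,j}$ does not depend on $i$, so write $\mathcal M_j$. By the second item, the sets $\mathcal M_j$ for different $j$ are pairwise disjoint. Relabeling columns gives a bijection $\mathcal M_j\to\mathcal M_{j'}$, so all $\mathcal M_j$ have the same size. Hence $N_\varphi(d,t)=t\,|\mathcal M_j|$, and it suffices to show
\[
|\mathcal M_j|=\tfrac16 d(d+1)(3dt-2d+2).
\]

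To count $\mathcal M_j$, split the monomials $x_{kn}x_{mn}x_{lj}$ according to whether $n=j$ or $n\neq j$.

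\textbf{Case $n\neq j$.} The monomial is determined by the context position $n$ (one of $t-1$ choices), the unordered pair $\{k,m\}$ (a degree-two monomial in $d$ variables, $\binom{d+1}{2}$ choices), and $l$ ($d$ choices). These data are recovered uniquely from the monomial: the column appearing twice is $n$, and the column appearing once is $j$. So monomials with different data are distinct, and this case contributes $(t-1)\,d\cdot\frac{d(d+1)}{2}$ monomials.

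\textbf{Case $n=j$.} Here we get every cubic monomial in the $d$ variables of column $j$. This uses that $k,m,l$ range over all of $[d]$ independently, so every multiset of size three occurs. These monomials are disjoint from the first case, since their column multiset is $\{j,j,j\}$ rather than $\{n,n,j\}$. This case contributes $\binom{d+2}{3}=\frac{d(d+1)(d+2)}{6}$ monomials.

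Summing the two cases,
\[
\frac{d(d+1)}{6}\bigl(3d(t-1)+d+2\bigr)=\frac{d(d+1)(3dt-2d+2)}{6},
\]
as claimed. Multiplying by $t$ gives the formula for $N_\varphi(d,t)$.

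The count $N(d,t)=\binom{dt+2}{3}$ is the standard number of degree-three monomials in $dt$ variables. Taking the quotient gives
\[
\frac{N_\varphi}{N}=\frac{(d+1)(3dt-2d+2)}{(dt+1)(dt+2)}.
\]
For the asymptotics with $d$ fixed, the numerator is $3d(d+1)t+O(1)$ and the denominator is $d^2t^2\bigl(1+O(1/t)\bigr)$. Their ratio is therefore $\frac{3(d+1)}{d}\cdot\frac1t+O(1/t^2)$.

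The main point needing care is the injectivity in the case $n\neq j$: that the data $(n,\{k,m\},l)$ are read off uniquely from the monomial. This is the same column-multiset argument used in the proof of \Cref{prop:column-support}. The rest is routine arithmetic.
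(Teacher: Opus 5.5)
Your proof is correct and follows essentially the same route as the paper: fix $j$, split the monomials $x_{kn}x_{mn}x_{lj}$ into the cases $n=j$ (giving $\binom{d+2}{3}$) and $n\neq j$ (giving $(t-1)\binom{d+1}{2}d$), then multiply by $t$ using \Cref{prop:column-support}. You are somewhat more explicit than the paper about injectivity in the cross-column case, the equal size of the supports, and the asymptotic expansion, and all of these details check out.
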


\begin{proof}
By \Cref{prop:column-support}, the supports for different output
columns are disjoint, and the supports within a fixed output column are the
same across output rows. It is therefore enough to count the monomials in one
fixed output coordinate $\varphi_W(X)[i,j]$, and then multiply by $t$.

Fix $j\in[t]$. Every monomial in $\mathcal M_{i,j}$ has the form
$x_{kn}x_{mn}x_{lj}$ with $k,m,l\in[d]$ and $n\in[t]$. There are two cases. If $n=j$, all three variables lie in the $j$th column, so the number of
distinct monomials is the number of multisets of size $3$ from $[d]$, namely
$\binom{d+2}{3}$. If $n\neq j$, then two variables lie in column $n$ and one lies in column $j$.
There are $t-1$ choices for $n$, $\binom{d+1}{2}$ choices for the unordered
pair $\{k,m\}$, and $d$ choices for~$l$. Thus this contributes
$(t-1)\binom{d+1}{2}d$ monomials. Therefore
\[
|\mathcal M_{i,j}|
=
\binom{d+2}{3}+(t-1)\binom{d+1}{2}d
=
\frac{1}{6}d(d+1)(3dt-2d+2).
\]
Multiplying by $t$ gives
\[
N_\varphi(d,t)=t\,|\mathcal M_{i,j}|=\frac{1}{6}dt(d+1)(3dt-2d+2).
\]

Finally, the total number of cubic monomials in $dt$ variables is
$\binom{dt+2}{3}$, so the stated ratio follows. Thus, for fixed $d$, only a proportion on the order of $1/t$ of all cubic
monomials can appear in the output of lightning self-attention.
\end{proof}

We now introduce coefficient coordinates for the monomials in
$\mathcal M_{i,j}$. 
Let $\operatorname{Mult}_r([d])$ denote the multisets of
cardinality $r$ in $[d]$, and, for a multiset $\mathcal A$, let $\operatorname{Perm}(\mathcal{A})$ denote the set of distinct ordered tuples obtained by permuting its elements. 
For each $\mathcal K\in \operatorname{Mult}_3([d])$,
we write $\prod_{u\in \mathcal K} x_{uj}$ for the corresponding single-column
monomial in the $j$th column. For each $\mathcal A\in
\operatorname{Mult}_2([d])$, $b\in[d]$, and $n\neq j$, we write
$\left(\prod_{u\in \mathcal A} x_{un}\right)x_{bj}$ for the corresponding
cross-column monomial. Thus, for each output coordinate $(i,j)\in[d']\times[t]$, we may~write
\[
\varphi_W(X)[i,j]
=
\sum_{\mathcal K\in \operatorname{Mult}_3([d])}
c^{[i]}_{j}(\mathcal K)\prod_{u\in \mathcal K} x_{uj}
+
\sum_{n\neq j}
\sum_{\mathcal A\in \operatorname{Mult}_2([d])}
\sum_{b\in[d]}
c^{[i]}_{n,j}(\mathcal A,b)
\left(\prod_{u\in \mathcal A} x_{un}\right)x_{bj},
\]
where $c^{[i]}_{j}(\mathcal K)$ and $c^{[i]}_{n,j}(\mathcal A,b)$ denote the coefficients in the standard monomial basis.

\begin{proposition}
\label{prop:sums-monomials}
The standard polynomial coefficients are given as follows:
\begin{enumerate}
    \item For $\mathcal K\in \operatorname{Mult}_3([d])$,
    \[
    c^{[i]}_{j}(\mathcal K)
    =
    \sum_{(p_1,p_2,p_3)\in \operatorname{Perm}(\mathcal K)}
    a_{p_2p_3}v_{ip_1}.
    \]

    \item For $\mathcal A\in \operatorname{Mult}_2([d])$, $b\in[d]$, and $n\neq j$,
    \[
    c^{[i]}_{n,j}(\mathcal A,b)
    =
    \sum_{(p_1,p_2)\in \operatorname{Perm}(\mathcal A)}
    a_{p_2b}v_{ip_1}.
    \]
\end{enumerate}
\end{proposition}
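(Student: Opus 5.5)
The plan is to start from the expansion in \Cref{prop:param-ij},
\[
\varphi_W(X)[i,j]=\sum_{k,m,l\in[d],\,n\in[t]} (a_{ml}v_{ik})\,x_{kn}x_{mn}x_{lj},
\]
and collect terms by monomial. By \Cref{prop:column-support}, the monomials with $n=j$ are exactly the single-column monomials of column $j$. The monomials with $n\neq j$ are exactly the cross-column monomials $\bigl(\prod_{u\in\mathcal A}x_{un}\bigr)x_{bj}$, and these two families do not overlap. So I will split the sum into the $n=j$ part and the $n\neq j$ parts and treat each separately. The coefficient of a given monomial is then the sum of $a_{ml}v_{ik}$ over all index triples $(k,m,l)$ whose product of variables equals that monomial.

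For the single-column case, fix $\mathcal K\in\operatorname{Mult}_3([d])$. The term indexed by $(k,m,l)$ with $n=j$ contributes to the monomial $x_{kj}x_{mj}x_{lj}$. This equals $\prod_{u\in\mathcal K}x_{uj}$ exactly when the ordered triple $(k,m,l)$ is a permutation of the multiset $\mathcal K$, that is, $(k,m,l)\in\operatorname{Perm}(\mathcal K)$. Each distinct ordered tuple occurs exactly once as an index of the triple sum. Renaming $(p_1,p_2,p_3)=(k,m,l)$ therefore gives $c^{[i]}_j(\mathcal K)=\sum_{\operatorname{Perm}(\mathcal K)} a_{p_2p_3}v_{ip_1}$.

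For the cross-column case, fix $n\neq j$, $\mathcal A\in\operatorname{Mult}_2([d])$ and $b\in[d]$. A term $(k,m,l)$ with this $n$ gives $x_{kn}x_{mn}x_{lj}$. Because the variables sit in different columns, this equals the target monomial if and only if $l=b$ and $(k,m)\in\operatorname{Perm}(\mathcal A)$. Again each ordered pair appears once, so the coefficient is $\sum_{(p_1,p_2)\in\operatorname{Perm}(\mathcal A)}a_{p_2b}v_{ip_1}$.

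The only delicate point is the bookkeeping of multiplicities. Summing over distinct ordered tuples in $\operatorname{Perm}$, rather than over all $3!$ or $2!$ permutations, is exactly what matches the free index sum, in which each ordered tuple appears once. This handles repeated elements, such as $\mathcal A=\{p,p\}$, correctly.
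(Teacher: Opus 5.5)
Your proposal is correct and follows essentially the same route as the paper: expand via \Cref{prop:param-ij}, split into the $n=j$ and $n\neq j$ terms, and observe that the ordered index tuples producing a given monomial are exactly the distinct elements of $\operatorname{Perm}(\mathcal K)$, respectively of $\operatorname{Perm}(\mathcal A)$ with $l=b$ fixed. Your explicit remark that summing over distinct ordered tuples, rather than over all $3!$ or $2!$ permutations, matches the free index sum is a point the paper leaves implicit.
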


\begin{proof}
By \Cref{prop:param-ij}, the ordered expansion of
$\varphi_W(X)[i,j]$ is
\[
\varphi_W(X)[i,j]
=
\sum_{k,m,l\in[d],\,n\in[t]}
(a_{ml}v_{ik})\,x_{kn}x_{mn}x_{lj}.
\]
To obtain the standard polynomial coefficients, we group together all ordered
terms that yield the same monomial. If $n=j$, then all three variables lie in column $j$, so the monomial is of the
form $\prod_{u\in\mathcal K} x_{uj}$ for some
$\mathcal K\in\operatorname{Mult}_3([d])$. The ordered triples
$(k,m,l)$ that produce this monomial are precisely the distinct permutations of
$\mathcal K$. Summing the corresponding coefficients gives
\[
c^{[i]}_{j}(\mathcal K)
=
\sum_{(p_1,p_2,p_3)\in\operatorname{Perm}(\mathcal K)}
a_{p_2p_3}v_{ip_1}.
\]
If $n\neq j$, then $x_{lj}$ is the unique factor in column $j$, so its row index
$b=l$ is fixed, while the two factors in column $n$ determine a multiset
$\mathcal A=\{k,m\}\in\operatorname{Mult}_2([d])$. Thus the ordered pairs
$(k,m)$ that produce the monomial
$\left(\prod_{u\in\mathcal A}x_{un}\right)x_{bj}$
are precisely the distinct permutations of $\mathcal A$. Summing the
corresponding coefficients gives
\[
c^{[i]}_{n,j}(\mathcal A,b)
=
\sum_{(p_1,p_2)\in\operatorname{Perm}(\mathcal A)}
a_{p_2b}v_{ip_1}.
\]
\end{proof}

\subsection{Scaled coefficients and the attention variety}

While $c_j^{[i]}(\mathcal K)$ and $c_{n,j}^{[i]}(\mathcal A,b)$ are the standard polynomial coefficients, 
it is useful for our analysis to arrange
them into symmetric tensors. 
To ensure 
that contraction of these tensors recovers the corresponding polynomials, 
we 
normalize each coefficient by the size of its permutation orbit. Accordingly, we introduce 
scaled coefficient coordinates  
%
\begin{equation}
\label{eq:scaled-coefficients}
y^{[i]}_{j}(\mathcal K)
=
\frac{1}{|\operatorname{Perm}(\mathcal K)|}\,c^{[i]}_{j}(\mathcal K),
\qquad
y^{[i]}_{n,j}(\mathcal A,b)
=
\frac{1}{|\operatorname{Perm}(\mathcal A)|}\,c^{[i]}_{n,j}(\mathcal A,b).
\end{equation}
As before, these are indexed by $\mathcal K\in \operatorname{Mult}_3([d])$, $\mathcal A\in \operatorname{Mult}_2([d])$, and $b\in[d]$. 
We call 
$y^{[i]}_{j}(\mathcal K)$ the \textit{single-column coefficients}, and 
$y^{[i]}_{n,j}(\mathcal A,b)$ the \textit{cross-column coefficients}. 
%
%
We now package these scaled coefficients into a polynomial parametrization, and denote their joint value at $W$ by $y(W)$. 

\begin{definition}[Parametrization map]
\label{def:parametrization}
Since each output row has $N_\varphi(d,t)$ coefficient coordinates, the
scaled coefficient array $y(W)$ determines a point of
$\mathbb{R}^{d'N_\varphi(d,t)}$. We define the parametrization map
$$
\mu:\mathbb{R}^{a\times d}\times \mathbb{R}^{a\times d}\times
\mathbb{R}^{d'\times d}
\to \mathbb{R}^{d'N_\varphi(d,t)},\qquad
W=(Q,K,V)\mapsto y(W).
$$
\end{definition} 

\noindent Since $\mu$ has real coefficients, we use the same notation for its complexification.

\begin{definition}[Attention variety]
\label{def:attention-variety}
The \emph{attention variety} is the Zariski closure
$$
\mathcal{V}:=\overline{\operatorname{im}(\mu)}
\subseteq \mathbb{C}^{d'N_\varphi(d,t)}.
$$
We write $J\subseteq \mathbb{C}[y]$ for its vanishing ideal.
\end{definition}

\section{Linear relations}
\label{sec:linear} 

The coefficients of the lightning self-attention map satisfy two types of linear relations. 
The first are \emph{sequence-copy relations}, which identify groups of coefficient variables in the output coefficient array 
that coincide when the column indices of the input matrix $X$ are varied. 
The second are \emph{internal symmetrization relations}, which occur within a fixed output coordinate when $t\geq2$ 
and express each single-column coefficient as a linear combination of cross-column coefficients. 

\paragraph{Sequence-copy relations.}
Fix $i\in[d']$. By \Cref{prop:sums-monomials}, under the
parametrization map $\mu$ the coordinate $y^{[i]}_j(\mathcal K)$ takes a value
that depends only on the multiset $\mathcal K\in \operatorname{Mult}_3([d])$
and not on the output column $j$. Likewise, the coordinate
$y^{[i]}_{n,j}(\mathcal A,b)$ depends only on
$\mathcal A\in \operatorname{Mult}_2([d])$ and $b\in[d]$, and is independent of
the choice of $(n,j)$ subject to $n\neq j$. Hence the defining ideal $J$
contains the linear relations
\begin{equation*}
y^{[i]}_{j}(\mathcal K)-y^{[i]}_{j'}(\mathcal K)=0
\qquad
\text{for all } \mathcal K \in \operatorname{Mult}_3([d]),\ j,j' \in [t],
\end{equation*}
and
\begin{equation*}
y^{[i]}_{n,j}(\mathcal A,b)-y^{[i]}_{n',j'}(\mathcal A,b)=0
\qquad
\text{for all } \mathcal A \in \operatorname{Mult}_2([d]),\ b \in [d],\
n\neq j,\ n'\neq j'.
\end{equation*}
These relations identify repeated coefficient blocks obtained by varying the
column indices.

\paragraph{Internal symmetrization relations.}
When $t\geq2$, after quotienting out the sequence-copy relations, additional linear invariants already appear inside a single fixed output coordinate $(i,j)$. They arise because the coefficient of a single-column monomial is a sum over all permutations of the three row indices in $\mathcal K$, whereas the coefficient of a cross-column monomial is a sum only over permutations of the two row indices attached to the context column $n$ (that is, the repeated input column), with the target column $j$ fixed. 
As a result, every single-column coefficient can be written as an exact linear combination of cross-column coefficients.

\begin{proposition}[Linear invariants]
\label{prop:symmetrization}
Assume $t\geq2$. Fix an output coordinate $(i,j)\in[d']\times[t]$ and a context column
$n\in[t]$ with $n\neq j$. Let $\mathcal K\in\operatorname{Mult}_3([d])$.
Then the defining ideal $J$ contains the linear polynomial
\[
|\operatorname{Perm}(\mathcal K)|\,y^{[i]}_j(\mathcal K)
-
\sum_{\substack{\mathcal A\cup\{b\}=\mathcal K\\|\mathcal A|=2}}
|\operatorname{Perm}(\mathcal A)|\,y^{[i]}_{n,j}(\mathcal A,b),
\]
where the sum runs over all distinct decompositions of the multiset
$\mathcal K$ into a multiset $\mathcal A$ of size $2$ and a singleton $\{b\}$.
\end{proposition}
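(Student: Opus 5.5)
Since the attention variety $\mathcal V$ is the Zariski closure of $\operatorname{im}(\mu)$, a polynomial lies in $J$ as soon as it vanishes identically on $\operatorname{im}(\mu)$, i.e. as a polynomial in the parameters $W$. The plan is therefore to substitute the formulas of \Cref{prop:sums-monomials}, via the scaling \eqref{eq:scaled-coefficients}, into the displayed linear form and check that the result is zero in $\mathbb{C}[a_{ml},v_{ik}]$. By the scaling, $|\operatorname{Perm}(\mathcal K)|\,y^{[i]}_j(\mathcal K)=c^{[i]}_j(\mathcal K)$ and $|\operatorname{Perm}(\mathcal A)|\,y^{[i]}_{n,j}(\mathcal A,b)=c^{[i]}_{n,j}(\mathcal A,b)$. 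So the claim reduces to the identity
\[
\sum_{(p_1,p_2,p_3)\in\operatorname{Perm}(\mathcal K)} a_{p_2p_3}v_{ip_1}
=\sum_{\substack{\mathcal A\cup\{b\}=\mathcal K\\|\mathcal A|=2}}\ \sum_{(p_1,p_2)\in\operatorname{Perm}(\mathcal A)} a_{p_2b}v_{ip_1}.
\]
Here the hypothesis $t\ge2$ is used only to guarantee that a context column $n\neq j$ exists. The right-hand side does not depend on $n$, by \Cref{prop:sums-monomials}.

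The core step is a bijection of index sets. Consider the map sending an ordered triple $(p_1,p_2,p_3)\in\operatorname{Perm}(\mathcal K)$ to the pair consisting of the decomposition $(\mathcal A,b)=(\{p_1,p_2\},p_3)$ and the ordered pair $(p_1,p_2)\in\operatorname{Perm}(\mathcal A)$. Under this map, the summand $a_{p_2p_3}v_{ip_1}$ on the left matches the summand $a_{p_2b}v_{ip_1}$ on the right. It remains to show the map is a bijection onto the disjoint union $\bigsqcup_{(\mathcal A,b)}\operatorname{Perm}(\mathcal A)$. The inverse is $((\mathcal A,b),(p_1,p_2))\mapsto(p_1,p_2,b)$. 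This tuple is a distinct ordering of $\mathcal A\cup\{b\}=\mathcal K$, so it lies in $\operatorname{Perm}(\mathcal K)$. Conversely, every distinct ordered triple determines its last entry $b$ and the multiset of its first two entries uniquely.

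The main obstacle is the multiplicity bookkeeping, which is the point of the phrase ``distinct decompositions.'' A decomposition must be read as a pair $(\mathcal A,b)$ with $\mathcal A=\mathcal K\setminus\{b\}$, indexed by the distinct values $b\in\mathcal K$, not by positions. Likewise, both $\operatorname{Perm}$ sets consist of distinct tuples. Under these conventions the bijection holds without overcounting. I would sanity-check the three shapes of $\mathcal K$:
\begin{itemize}
\item $\{u,u,u\}$: one tuple on the left; one decomposition with one permutation on the right.
\item $\{u,u,w\}$: three tuples on the left; on the right, decomposition $b=w$ contributes $1$ and $b=u$ contributes $2$.
\item Three distinct elements: $6=3\cdot 2$.
\end{itemize}
With these counts the identity holds termwise, so the linear form vanishes on $\operatorname{im}(\mu)$ and hence lies in $J$.
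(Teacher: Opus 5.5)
Your proposal is correct and takes essentially the same approach as the paper: it substitutes the unscaled coefficients from \Cref{prop:sums-monomials} and partitions the sum over $\operatorname{Perm}(\mathcal K)$ by the value of the last index $p_3=b$, which is exactly your bijection. Your explicit multiplicity check on the three shapes of $\mathcal K$ simply spells out why that partition involves no overcounting.
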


\begin{proof}
By \Cref{prop:sums-monomials}, the unscaled coefficient of the
single-column monomial indexed by $\mathcal K$ is
\[
c_j^{[i]}(\mathcal K)
=
\sum_{(p_1,p_2,p_3)\in\operatorname{Perm}(\mathcal K)}
a_{p_2p_3}v_{ip_1}.
\]
Partition this sum according to the value of the third index $p_3=b$. For each
such choice, the remaining two indices form a multiset
$\mathcal A\in\operatorname{Mult}_2([d])$ with
$\mathcal A\cup\{b\}=\mathcal K$. Hence
\[
c_j^{[i]}(\mathcal K)
=
\sum_{\substack{\mathcal A\cup\{b\}=\mathcal K\\|\mathcal A|=2}}
\left(
\sum_{(p_1,p_2)\in\operatorname{Perm}(\mathcal A)}
a_{p_2b}v_{ip_1}
\right).
\]
By \Cref{prop:sums-monomials}, the inner sum is exactly
$c_{n,j}^{[i]}(\mathcal A,b)$. Therefore
\[
c_j^{[i]}(\mathcal K)
=
\sum_{\substack{\mathcal A\cup\{b\}=\mathcal K\\|\mathcal A|=2}}
c_{n,j}^{[i]}(\mathcal A,b).
\]
Using the definition of the parametrization map, the ambient variables
$y^{[i]}_j(\mathcal K)$ and $y^{[i]}_{n,j}(\mathcal A,b)$ evaluate to the
corresponding scaled coefficients, so
\[
c_j^{[i]}(\mathcal K)=|\operatorname{Perm}(\mathcal K)|\,y^{[i]}_j(\mathcal K),
\qquad
c_{n,j}^{[i]}(\mathcal A,b)=|\operatorname{Perm}(\mathcal A)|\,y^{[i]}_{n,j}(\mathcal A,b).
\]
Substituting shows that the displayed linear polynomial vanishes on
$\operatorname{im}(\mu)$, and therefore belongs to~$J$.
\end{proof}

\begin{corollary}\label{cor:linear-invariants-count}
Assume $t\geq2$. For a single output coordinate $(i,j)$, the defining ideal of the attention variety contains $\binom{d+2}{3}$ linearly independent linear invariants arising from internal symmetrization.
\end{corollary}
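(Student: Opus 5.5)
The plan is to take, for the fixed output coordinate $(i,j)$ and one fixed context column $n\neq j$ (which exists since $t\geq2$), the family of linear polynomials
\[
L_{\mathcal K}
=
|\operatorname{Perm}(\mathcal K)|\,y^{[i]}_j(\mathcal K)
-
\sum_{\substack{\mathcal A\cup\{b\}=\mathcal K\\|\mathcal A|=2}}
|\operatorname{Perm}(\mathcal A)|\,y^{[i]}_{n,j}(\mathcal A,b),
\qquad \mathcal K\in\operatorname{Mult}_3([d]).
\]
\Cref{prop:symmetrization} already shows that each $L_{\mathcal K}$ lies in $J$, so it only remains to count them and check that they are linearly independent. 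The count is immediate: the family is indexed by $\operatorname{Mult}_3([d])$, which has $\binom{d+2}{3}$ elements.

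For linear independence, I would use a triangularity argument based on disjoint supports in the single-column variables. The polynomial $L_{\mathcal K}$ involves exactly one single-column variable, namely $y^{[i]}_j(\mathcal K)$, and its coefficient $|\operatorname{Perm}(\mathcal K)|$ is a positive integer, hence nonzero. All remaining terms involve only cross-column variables $y^{[i]}_{n,j}(\cdot,\cdot)$. Distinct multisets $\mathcal K\neq\mathcal K'$ give distinct coordinates $y^{[i]}_j(\mathcal K)\neq y^{[i]}_j(\mathcal K')$.

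Now suppose $\sum_{\mathcal K}\lambda_{\mathcal K}L_{\mathcal K}=0$ in $\mathbb{C}[y]$. Taking the coefficient of the variable $y^{[i]}_j(\mathcal K)$ gives $\lambda_{\mathcal K}|\operatorname{Perm}(\mathcal K)|=0$, since no other $L_{\mathcal K'}$ contains this variable. Hence $\lambda_{\mathcal K}=0$ for every $\mathcal K$. Equivalently, projecting the span of the $L_{\mathcal K}$ onto the single-column coordinates produces a diagonal matrix with nonzero diagonal entries.

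There is essentially no obstacle here. The only point needing care is that the single-column variable indexed by $\mathcal K$ does not appear among the cross-column terms of any $L_{\mathcal K'}$. This holds because single-column and cross-column coordinates are distinct ambient variables, indexing different monomials (\Cref{prop:column-support} and the coordinate definitions). I would also note that the count is exact only as a lower bound on independent relations from this construction: varying $n$ gives further relations, but these are congruent modulo the sequence-copy relations, so the statement is about a fixed $n$.
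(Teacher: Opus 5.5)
Your proposal is correct and follows essentially the same approach as the paper. You fix one context column $n\neq j$, take the $\binom{d+2}{3}$ relations of \Cref{prop:symmetrization} indexed by $\mathcal K\in\operatorname{Mult}_3([d])$, and get linear independence because each relation contains exactly one single-column variable $y^{[i]}_j(\mathcal K)$, with nonzero coefficient $|\operatorname{Perm}(\mathcal K)|$, and that variable appears in no other relation.
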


\begin{proof}
Fix any context column $n \neq j$; by \Cref{prop:symmetrization}, each $\mathcal K \in \mathrm{Mult}_3([d])$ determines such a relation for any fixed $n$, and different choices of $n$ only produce sequence-copy equivalents. 

There are exactly $\binom{d+2}{3}$ single-column monomials in the coordinate~$(i,j)$. By \Cref{prop:symmetrization}, each such monomial indexed by $\mathcal K\in \operatorname{Mult}_3([d])$ gives a linear relation whose single-column term is
$
|\operatorname{Perm}(\mathcal K)|\, y^{[i]}_j(\mathcal K).
$
As $\mathcal K$ varies, these single-column variables are distinct, and each appears only in the relation indexed by that same $\mathcal K$. Therefore the resulting $\binom{d+2}{3}$ relations are linearly independent.
\end{proof}

\begin{remark}[Reduction to one representative family]\label{rmk:reduction-to-one-coordinate}
Two independent reductions allow us to restrict to one representative family of coefficient variables for the rowwise linear relations studied in this section. First, by Remark~\ref{rmk:reduction-d'=1}, it is enough to take $d'=1$. Indeed, every rowwise relation involving the variables $y^{[1]}_\bullet$ has an identical copy involving the variables $y^{[i]}_\bullet$ for any other row $i$. Second, for fixed $i$, the sequence-copy relations identify coefficient families obtained by varying the column indices. Thus all single-column families are copies of one another, and, when $t\geq2$, all cross-column families are likewise copies. Therefore, after applying the reduction $d'=1$, it suffices to work with one fixed output coordinate $(1,j)$. When $t\geq2$, the choice of context column $n\neq j$ is irrelevant for the linear relations, because different choices of $n$ give the same relation after replacing the corresponding variables using the sequence-copy relations.
\end{remark}

\paragraph{Convention.}
In view of \Cref{rmk:reduction-d'=1} and
\Cref{rmk:reduction-to-one-coordinate}, we henceforth set $d'=1$ and work
with a fixed output coordinate $(1,j)$.
Accordingly, we write $V=v^\top$, where
$v=(v_1,\ldots,v_d)^\top\in\mathbb R^d$. For conceptual
statements, we write
\[
y_j(\mathcal K), \qquad y_{n,j}(\mathcal A,b)\ \ (n\neq j),
\]
where $\mathcal K \in \operatorname{Mult}_3([d])$, $\mathcal A \in
\operatorname{Mult}_2([d])$, and $b \in [d]$. These correspond to the ambient
coordinates
\[
y_{(k_1,j),(k_2,j),(k_3,j)}
\qquad\text{and}\qquad
y_{(a_1,n),(a_2,n),(b,j)},
\]
where in the cross-column case the two context-column indices are written first and the target-column index is written last. 
Thus, for example, 
\[
y_1(\{1,2,2\})=y_{(1,1),(2,1),(2,1)},
\qquad
y_{2,1}(\{1,2\},2)=y_{(1,2),(2,2),(2,1)}.
\]
By abuse of notation, we continue to write $J$ for the corresponding ideal in
the reduced ambient variables.

\begin{example}
Let $d=t=a=2$ and $d'=1$. For this example, fix the target column $j=1$ and
choose the context column $n=2$. Then the defining ideal $J$ contains four
linear polynomials and one quartic polynomial. The quartic invariant is derived
in \Cref{ex:quartic}. By \Cref{prop:symmetrization}, the four
linear generators are
\begin{align*}
    & y_{(2,1),(2,1),(2,1)} - y_{(2,2),(2,2),(2,1)},\\
    & 3\,y_{(1,1),(2,1),(2,1)} - 2\,y_{(1,2),(2,2),(2,1)} - y_{(2,2),(2,2),(1,1)},\\
    & 3\,y_{(1,1),(1,1),(2,1)} - y_{(1,2),(1,2),(2,1)} - 2\,y_{(1,2),(2,2),(1,1)},\\
    & y_{(1,1),(1,1),(1,1)} - y_{(1,2),(1,2),(1,1)}. 
\end{align*}
\end{example}

\begin{example}
Let $d=3$, $t=2$, $d'=1$, and $a=1$. By \Cref{prop:symmetrization}
and \Cref{cor:linear-invariants-count}, there are
$\binom{3+2}{3}=10$ linearly independent linear relations in a single fixed
output coordinate. However, these linear relations do not
generate the full defining ideal $J$; it is minimally
generated by these $10$ linear relations together with $45$ quadratic and $10$
cubic polynomials. We construct these nonlinear generators explicitly in \Cref{ex:3121}.
\end{example}

\section{Single-column monomials}\label{sec:single-column} 

We now turn to the single-column coefficients, that is, the cubic forms supported in a single input column. By restricting the attention map to terms supported entirely in the target column $j$, the associated cubic has the form
$
f(x)=(v^\top x)(x^\top A x),
$
where $\rank(A)\leq a$. Thus every single-column cubic factors as a product of a linear form and a quadratic form, with an additional rank constraint inherited from the attention matrix $A$. 
We first describe this geometric structure and then derive algebraic equations from~it. 

The results of this section describe the single-column slice for any
sequence length~$t$, but they are primarily needed in the single-token
case $t=1$. When $t\geq2$, the linear relations of \Cref{sec:linear}
express the single-column coefficients in terms of the cross-column
coefficients, so the nonlinear single-column relations are redundant.

For the structural description of the single-column cubics, we work over
$\CC$. For a quadratic form $q$, we write $\rank(q)$ for the rank of its
symmetric coefficient matrix.

\begin{proposition}[Single-column structure]
\label{prop:single-column-structure}
The single-column cubics arising from the attention parametrization are
exactly the cubics of the form
$
f=\ell q,
$
where $\ell\in\CC[x]_1$, $q\in\CC[x]_2$, and
$
\rank(q)\leq 2a.
$
\end{proposition}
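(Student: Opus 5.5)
We prove the two inclusions separately, working over $\CC$ and with complexified parameters $Q,K\in\CC^{a\times d}$, $v\in\CC^d$. By \Cref{prop:param-ij} with $t=1$, or by restricting to the target column $j$ in general, the single-column cubic is $f(x)=(v^\top x)(x^\top A x)$ with $A=K^\top Q$. The only point needing care is that $x^\top A x$ depends only on the symmetric part $S=\tfrac12(A+A^\top)$. So the quadratic form attached to $f$ is $q(x)=x^\top S x$, and its rank is $\rank(S)$, not $\rank(A)$.

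For the forward inclusion, set $\ell=v^\top x$ and $q=x^\top Sx$. Since $\rank(A)\le a$ and $\rank(A^\top)\le a$, subadditivity gives
\[
\rank(S)\le \rank(A)+\rank(A^\top)\le 2a.
\]
Hence every single-column cubic has the form $\ell q$ with $\rank(q)\le 2a$. This explains the factor $2a$ rather than $a$.

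For the reverse inclusion, let $f=\ell q$ with $\rank(q)\le 2a$. Write $\ell=v^\top x$, which fixes $v$, and write $q=x^\top S x$ with $S$ symmetric and $\rank(S)=r\le 2a$. We must produce $A=K^\top Q$ of rank at most $a$ with $\tfrac12(A+A^\top)=S$. Over $\CC$, every symmetric matrix of rank $r$ is congruent to $\diag(I_r,0)$, so $q=\sum_{s=1}^r \lambda_s^2$ for linearly independent linear forms $\lambda_s$. Pair these up: for each pair, $\lambda_{2s-1}^2+\lambda_{2s}^2=\alpha_s\beta_s$ with $\alpha_s=\lambda_{2s-1}+i\lambda_{2s}$ and $\beta_s=\lambda_{2s-1}-i\lambda_{2s}$. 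If $r$ is odd, the leftover square is $\lambda_r\cdot\lambda_r$. This writes
\[
q=\sum_{s=1}^{\lceil r/2\rceil}\alpha_s\beta_s, \qquad \lceil r/2\rceil\le a.
\]
Now take $A=\sum_s \alpha_s\beta_s^\top$, viewing the $\alpha_s,\beta_s$ as coefficient vectors. Then $\rank(A)\le a$ and $x^\top Ax=q$. Padding with zero terms up to exactly $a$ summands, factor $A=K^\top Q$: let the rows of $K$ be the $\alpha_s$ and the rows of $Q$ be the $\beta_s$.

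The main obstacle is this reverse step, specifically the observation that a quadric of rank at most $2a$ is a sum of at most $a$ products of two linear forms. This needs the diagonalization over $\CC$; over $\mathbb{R}$ the statement would fail, for example for definite forms. We also need a convention for degenerate cases: if $\ell=0$ or $q=0$, then $f=0$ is trivially realized. With the statement read as an equality of sets over $\CC$ (the image of the complexified parametrization), the argument is complete. No Zariski closure is needed, since the set of cubics $\{\ell q:\rank q\le 2a\}$ is already closed, being the image of a projective-type product of closed cones.
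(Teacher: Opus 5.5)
Your proposal is correct and follows essentially the same argument as the paper. For the forward inclusion you bound $\rank\bigl(\tfrac12(A+A^\top)\bigr)\le 2a$ by subadditivity; for the converse you diagonalize $q$ over $\CC$ as a sum of $r\le 2a$ squares, pair them into $\lceil r/2\rceil\le a$ products of linear forms, and assemble $A=\sum_s\alpha_s\beta_s^\top$ of rank at most $a$. Your explicit factorization $A=K^\top Q$ and the closedness remark are minor additions that the paper leaves implicit.
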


\begin{proof}
By restricting \Cref{prop:param-ij} to the terms supported entirely in
the target column $j$, the associated cubic is
$$
f(x)=(v^\top x)(x^\top A x),
$$
where $\rank(A)\leq a$. Thus $f=\ell q$ with
$\ell=v^\top x$ and $q=x^\top A x$. The quadratic form $q$ is
represented by the symmetric matrix
$
\frac{1}{2}(A+A^\top),
$
so
$$
\rank(q)
\leq
\rank(A)+\rank(A^\top)
\leq 2a.
$$

Conversely, suppose that $f=\ell q$ with $\rank(q)=r\leq 2a$.
Over $\CC$, after a linear change of coordinates we may write
$$
q=z_1^2+\cdots+z_r^2.
$$
Pairing the terms gives
$$
z_{2k-1}^2+z_{2k}^2
=
(z_{2k-1}+iz_{2k})(z_{2k-1}-iz_{2k}),
$$
with a final square $z_r^2$ if $r$ is odd. Hence $q$ is a sum of at
most $\lceil r/2\rceil\leq a$ products of linear forms:
$$
q=\sum_{k=1}^a \ell_km_k,
$$
where some terms may be zero. Writing
$\ell_k=u_k^\top x$ and $m_k=w_k^\top x$ and setting
$$
A=\sum_{k=1}^a u_kw_k^\top,
$$
we have $\rank(A)\leq a$ and $q=x^\top A x$. Choosing $v$ such that
$\ell=v^\top x$ realizes $f$ as a single-column cubic of the attention
parametrization.
\end{proof}

We next derive algebraic equations from this factorization. One such
construction uses linear syzygies among the first partial derivatives of
the cubic.

\begin{definition}[Syzygy]
Let $R = \mathbb{R}[x_1, \dots, x_d]$ be a polynomial ring, and let $g_1, \dots, g_k \in R$ be a collection of polynomials. A \emph{syzygy} among the $g_i$ is a tuple of polynomials $(p_1, \dots, p_k) \in R^k$ such that
\begin{equation*}
    p_1 g_1 + p_2 g_2 + \dots + p_k g_k = 0.
\end{equation*}
A syzygy is called \emph{trivial} if all $p_i = 0$. If the 
$p_i$ are linear forms, we call it a \emph{linear syzygy}. 
\end{definition}

In our setting, the relevant collection of polynomials is the set of first partial derivatives of a cubic form~$f$, namely $f_{x_1}, \dots, f_{x_d}$. We encode linear syzygies among these derivatives using the natural action of the Lie algebra $\mathfrak{sl}_d$ on $f$. More concretely, an element $M=(m_{uv}) \in \mathfrak{sl}_d$ is a trace-zero $d \times d$ matrix, and it acts on $f$ by sending it to the polynomial $M \cdot f = \sum_{u,v} m_{uv} x_u f_{x_v}$. Thus $M \cdot f$ is a linear combination of the first partial derivatives of $f$, with coefficients that are linear forms in the variables $x_1,\dots,x_d$. Therefore, the condition $M \cdot f = 0$ is exactly a linear syzygy among $f_{x_1},\dots,f_{x_d}$. 

Let $U = \mathbb{R}^d$, let $U^*$ denote its dual vector space, and let $S^3U^*$ denote the third symmetric power of $U^*$, naturally identified with the space of homogeneous cubic polynomials on $U$.
Let $f \in S^3 U^*$ be a homogeneous cubic polynomial in variables $x_1, \dots, x_d$. 
The Lie algebra $\mathfrak{gl}_d$ of all $d\times d$ real matrices
acts on $f$ via first-order differential operators
\[
E_{uv}=x_u\frac{\partial}{\partial x_v}.
\]
Since $\mathfrak{gl}_d = \mathfrak{sl}_d \oplus \mathbb{R}I$,
every element of $\mathfrak{gl}_d$ decomposes uniquely into a trace-zero part and a scalar multiple of the identity. Indeed, any $A\in\mathfrak{gl}_d$ can be written uniquely as
\[
A = \left(A - \frac{\operatorname{tr}(A)}{d}I\right)
+ \frac{\operatorname{tr}(A)}{d}I,
\]
with the first term in $\mathfrak{sl}_d$. For homogeneous cubics, the identity matrix acts by the Euler operator
\[
E=\sum_{u=1}^d x_u \frac{\partial}{\partial x_u}.
\]
Euler's identity gives $E(f)=3f$. So the $\mathbb{R}I$-summand contributes only scalar multiples of $f$, so we restrict to the trace-zero subalgebra $\mathfrak{sl}_d$, which has dimension~$d^2-1$. 

\begin{definition}[Lie algebra flattening matrix]
We define the linear map
\[
\Psi_f:\mathfrak{sl}_d \to S^3U^*, \qquad D \mapsto D(f).
\]
Fix a standard basis of $\mathfrak{sl}_d$ given by the off-diagonal operators $E_{uv}$ for $u\neq v$ and the trace-zero diagonal operators $H_u=E_{uu}-E_{dd}$ for $u<d$, together with the standard monomial basis of $S^3U^*$. The \emph{Lie algebra flattening matrix} $M_{\mathrm{Lie}}(f)$ is the $\binom{d+2}{3}\times(d^2-1)$ matrix representing $\Psi_f$ in these bases.
\end{definition}

The kernel of $M_{\mathrm{Lie}}(f)$ consists exactly of the trace-zero differential operators $D \in \mathfrak{sl}_d$ such that $D(f)=0$. If we write
\[
D=\sum_{u,v} m_{uv}\,x_u\frac{\partial}{\partial x_v},
\]
then
\[
D(f)=\sum_{u,v} m_{uv}\,x_u\frac{\partial f}{\partial x_v}
=\sum_{u,v} m_{uv}\,x_u f_{x_v}
=\sum_{v=1}^d \left(\sum_{u=1}^d m_{uv}x_u\right) f_{x_v}.
\]
Thus $D(f)=0$ takes the form
\[
\sum_{v=1}^d p_v\,f_{x_v}=0,
\qquad\text{where}\qquad
p_v=\sum_{u=1}^d m_{uv}x_u
\]
is a linear form for each $v$. Therefore, a nonzero operator $D \in \ker M_{\mathrm{Lie}}(f)$ gives a non-trivial linear syzygy among the first partial derivatives of $f$.

The next lemma shows that cubic forms in $d\geq 3$ variables that are reducible of type $(2,1)$, that is, cubics that factor as the product of a linear and a quadratic form, have rank-deficient Lie algebra flattening matrices. 
For $d\leq 2$, this factorization condition imposes no constraint, since every cubic form admits such a factorization over $\mathbb{R}$.

\begin{lemma} \label{lem:syzygy-factorization}
Let $d \ge 3$. If a cubic polynomial $f \in \mathbb{R}[x_1, \dots, x_d]$ factors as $f = \ell \cdot q$ for a linear form $\ell$ and a quadratic form $q$, then the map $\Psi_f$ has a non-trivial kernel. Consequently, the matrix $M_{\mathrm{Lie}}(f)$ has rank strictly less than $d^2-1$, and all of its maximal $(d^2-1) \times (d^2-1)$ minors vanish. 
\end{lemma}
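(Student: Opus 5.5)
The plan is to produce an explicit nonzero trace-zero derivation annihilating $f$. We do this first for a Zariski-dense set of factored cubics and then pass to all of them by closedness of the minor conditions.

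\textbf{Step 1 (nondegenerate $q$).} Write $\ell=w^\top x$ and $q=x^\top S x$ with $S$ symmetric, and assume first that $\det S\neq 0$. A matrix $M$ acts by the derivation $D_M=\sum_{u,v}m_{uv}x_u\partial_{x_v}$, which transforms a linear form $w^\top x$ into $(Mw)^\top x$ and a quadric $x^\top Sx$ into $x^\top(MS+SM^\top)x$. By the Leibniz rule, $D_M(\ell q)=D_M(\ell)\,q+\ell\,D_M(q)$. It therefore suffices to find $M\neq 0$ satisfying the following three conditions:
\begin{itemize}
\item $Mw=0$;
\item $MS+SM^\top=0$;
\item $\operatorname{tr}M=0$.
\end{itemize}
Substitute $M=NS^{-1}$. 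The second condition becomes $N^\top=-N$, i.e.\ $N$ is skew-symmetric, and the first becomes $Nw'=0$ with $w'=S^{-1}w$. The trace condition is then automatic, since $\operatorname{tr}(NS^{-1})=0$ for $N$ skew and $S^{-1}$ symmetric.

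The map $N\mapsto Nw'$ sends the $\binom{d}{2}$-dimensional space of skew matrices into $w'^{\perp}$. This holds because $w'^\top N w'=0$, so the image has dimension at most $d-1$. The kernel therefore has dimension at least $\binom{d}{2}-(d-1)=\binom{d-1}{2}$, which is at least $1$ exactly when $d\geq 3$. The argument works even if $w'$ is isotropic or zero, and it works over $\mathbb{R}$. Any nonzero such $N$ gives a nonzero $D=D_{NS^{-1}}\in\mathfrak{sl}_d$ with $D(f)=0$. Hence $\Psi_f$ has nontrivial kernel, and all maximal minors of $M_{\mathrm{Lie}}(f)$ vanish.

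\textbf{Step 2 (arbitrary $q$).} The entries of $M_{\mathrm{Lie}}(f)$ are linear in the coefficients of $f$. Hence each maximal minor, composed with the polynomial map $(w,S)\mapsto (w^\top x)(x^\top Sx)$, is a polynomial in $(w,S)$. By Step 1 this polynomial vanishes on the nonempty Zariski-open set $\{\det S\neq 0\}$, so it vanishes identically. Every maximal minor of $M_{\mathrm{Lie}}(\ell q)$ is therefore zero for all $\ell,q$, so $\operatorname{rank}M_{\mathrm{Lie}}(f)<d^2-1$. Since $M_{\mathrm{Lie}}(f)$ has $d^2-1$ columns, a nontrivial kernel of $\Psi_f$ follows.

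The main obstacle is Step 1 in the borderline case $d=3$. There, a naive count (the stabilizer of $\ell$ in $\mathfrak{gl}_d$ intersected with the stabilizer of $q$) gives the expected dimension $d(d-3)/2=0$. Likewise, a tangent-space comparison between the orbit of $f$ and the $(2,1)$-reducible locus only yields rank at most $8=d^2-1$. The fix is to work inside the orthogonal Lie algebra $\mathfrak{so}(q)$, which is automatically trace-zero. There, fixing one vector costs only $d-1$ conditions rather than $d$, which yields the extra dimension. For completeness, Step 2 could instead be handled directly by choosing $N$ skew with $Nw=0$ supported on $\ker S$ when $S$ is singular, but the closedness argument avoids that case analysis.
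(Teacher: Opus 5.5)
Your argument is correct, but it takes a different route from the paper's.

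\textbf{The paper's route.} It changes coordinates so that $\ell=x_1$ and writes down an explicit annihilator for \emph{every} factored cubic, using a two-case analysis. If $\partial_2 q,\partial_3 q$ are linearly independent, it takes $D=(\partial_3 q)\partial_2-(\partial_2 q)\partial_3$. Otherwise a dependence $\sum_{k\ge 2}c_k\partial_k q=0$ gives $D=x_1\sum_{k\ge 2}c_k\partial_k$. In both cases tracelessness is checked by hand.

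\textbf{Your route.} You work coordinate-free. For nondegenerate $q$ you parametrize $\mathfrak{so}(q)\cap\mathfrak{stab}(\ell)$ by skew $N$ with $NS^{-1}w=0$, and tracelessness comes for free. You then handle degenerate $q$ by a Zariski-density argument on the minors rather than by a case split.

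The two constructions are closer than they look. In the adapted coordinates, the paper's Case~1 operator is $D_M$ with $M=SN'$, where $N'$ is skew and $N'w=0$. This is the same family as your $NS^{-1}$ (take $N=SN'S$), written in a form that still makes sense when $S$ is singular. Indeed, both of the paper's operators lie in $\mathfrak{stab}(\ell)\cap\mathfrak{stab}(q)$.

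\textbf{What each buys.} The paper gives a pointwise, explicit kernel vector for every $f=\ell q$, with no limiting step. Your approach gives a quantitative bound. Step~1 shows $\dim\ker\Psi_f\ge\binom{d-1}{2}$ whenever $\det S\neq 0$. Running Step~2 with minors of size $d^2-\binom{d-1}{2}$ instead of the maximal ones propagates this to all factored cubics, so $\rank M_{\mathrm{Lie}}(\ell q)\le d^2-1-\binom{d-1}{2}$. This is strictly stronger than the lemma once $d\ge 4$; for $d=4$ it gives rank at most $12$ rather than $14$.

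Your closing aside about treating singular $S$ directly via $N$ ``supported on $\ker S$'' is too imprecise to stand as a proof, since the substitution $M=NS^{-1}$ is unavailable there. Your density argument makes it unnecessary.
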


\begin{proof}
The rank of the Lie algebra flattening matrix $M_{\mathrm{Lie}}(f)$ and the dimension of its kernel are invariant under invertible linear changes of coordinates, since such a change induces invertible row and column operations on the matrix representing $\Psi_f$. Therefore, we may choose coordinates $(x_1, \dots, x_d)$ such that the linear form is simply $\ell = x_1$. In these adapted coordinates, our polynomial is $f = x_1 q(x_1, \dots, x_d)$. Taking partial derivatives with respect to the variables $x_k$ for $k \ge 2$, the product rule gives $\partial_k f = x_1 \partial_k q$ for all $k \ge 2$. Since $d\ge 3$, there are at least two indices $k\ge 2$. We therefore consider the family of linear forms $\{\partial_k q\}_{k\ge 2}$ and distinguish two cases according to the dimension of its span.

\textbf{Case 1:} Suppose the span of $\{\partial_k q\}_{k\ge 2}$ has dimension at least $2$. Relabeling indices among $k\ge2$ if necessary, the linear forms $\partial_2 q$ and $\partial_3 q$ are linearly independent over $\mathbb{R}$. We construct the differential~operator
\[
D = (\partial_3 q)\partial_2 - (\partial_2 q)\partial_3.
\]
Applying $D$ to $f$ gives
\[
D(f) = (\partial_3 q)(\partial_2 f) - (\partial_2 q)(\partial_3 f)
= (\partial_3 q)(x_1 \partial_2 q) - (\partial_2 q)(x_1 \partial_3 q) = 0.
\]
Because the coefficients $\partial_3 q$ and $\partial_2 q$ are linear forms, $D$ is a first-order differential operator with linear coefficients, hence $D \in \mathfrak{gl}_d$. Moreover, $D \neq 0$ since $\partial_2 q$ and $\partial_3 q$ are linearly independent. To show that $D \in \mathfrak{sl}_d$, write $D=\sum_{v=1}^d p_v\,\partial_v$ with linear coefficients $p_v$. Such an operator corresponds to a matrix in $\mathfrak{gl}_d$, and its trace is given by
\[
\operatorname{tr}(D)=\sum_{v=1}^d \partial_v p_v,
\]
since differentiating each linear coefficient picks out the corresponding diagonal entry. For the operator above, the only nonzero coefficients are $p_2=\partial_3 q$ and $p_3=-\partial_2 q$. Thus, by symmetry of mixed partial~derivatives,
\[
\operatorname{tr}(D) = \frac{\partial}{\partial x_2}(\partial_3 q) + \frac{\partial}{\partial x_3}(-\partial_2 q)
= \partial_2\partial_3 q - \partial_3\partial_2 q = 0.
\]
Thus, $D$ is a nonzero trace-zero annihilator of $f$.

\textbf{Case 2:} Suppose the span of $\{\partial_k q\}_{k\ge 2}$ has dimension at most $1$. Since there are at least two indices $k\ge2$, it follows that the family $\{\partial_k q\}_{k\ge2}$ is linearly dependent over $\mathbb{R}$. Thus there exists a non-trivial linear combination of scalars $c_k$ such that $\sum_{k=2}^d c_k \partial_k q = 0$. Using $\partial_k f=x_1\partial_k q$, this gives
\begin{equation*}
    \sum_{k=2}^d c_k \partial_k f = x_1 \sum_{k=2}^d c_k \partial_k q = 0.
\end{equation*}
Define $D = x_1 \sum_{k=2}^d c_k \partial_k$. Then $D(f)=0$, and $D \neq 0$ since not all $c_k$ vanish. Furthermore, $D$ has trace~zero: 
\begin{equation*}
    \operatorname{tr}(D) = \sum_{k=2}^d c_k \frac{\partial x_1}{\partial x_k} = 0.
\end{equation*}
Thus, $D$ is again a non-zero, trace-zero annihilator of $f$.

In either case, there exists a non-trivial operator $D \in \mathfrak{sl}_d$ in the kernel of $\Psi_f$. Therefore, $M_{\mathrm{Lie}}(f)$ has non-trivial kernel, so its rank is less than $d^2-1$. Consequently, all of its maximal $(d^2-1) \times (d^2-1)$ minors vanish on this locus.
\end{proof}

By Proposition~\ref{prop:single-column-structure}, every single-column cubic is reducible of type $(2,1)$. 
%
Since this factorization property is invariant under nonzero scaling, it is natural to consider these cubics projectively. 
For a nonzero cubic $f\in S^3U^*$, write $[f]$ for its class in
$\mathbb{P}(S^3U^*)$. 
The 
variety of reducible cubic forms of type $(2,1)$ is the projective image of 
$$
\mathbb{P}(U^*)\times\mathbb{P}(S^2U^*)
\longrightarrow
\mathbb{P}(S^3U^*),
\qquad
([\ell],[q])\longmapsto[\ell q],
$$
see \cite[Section 4.5]{math6120314}. 
We refer to this as the \emph{$(2,1)$ Chow-type variety}, in analogy to the Chow variety of completely decomposable forms \cite[Chapter 8.6]{landsberg2012tensors}. 


\begin{corollary}
\label{cor:single-column-chow}
Assume $d\geq3$. If $f$ is a nonzero single-column cubic, then $[f]$
lies on the $(2,1)$ Chow-type variety. Consequently, the Lie
algebra flattening matrix $M_{\mathrm{Lie}}(f)$ is rank-deficient, and
all of its maximal $(d^2-1)\times(d^2-1)$ minors vanish.
\end{corollary}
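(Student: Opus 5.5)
The corollary combines \Cref{prop:single-column-structure} with \Cref{lem:syzygy-factorization}, so the plan is short.

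First, take a nonzero single-column cubic $f$. By \Cref{prop:single-column-structure} (or directly from \Cref{prop:param-ij} restricted to column $j$), it has the form $f=(v^\top x)(x^\top A x)=\ell q$ with $\ell=v^\top x$ and $q=x^\top Ax$. Since $f\neq 0$, both $\ell$ and $q$ are nonzero, so $[\ell]\in\PP(U^*)$ and $[q]\in\PP(S^2U^*)$ are well defined. Then $[f]=[\ell q]$ is the image of $([\ell],[q])$ under the multiplication map defining the $(2,1)$ Chow-type variety. This gives the first claim.

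One point needs care about the ground field. The lemma is stated over $\mathbb{R}$, and \Cref{prop:single-column-structure} is phrased over $\CC$. For realized coefficients, $v$ and $A$ are real, so $\ell$ and $q$ are real and the lemma applies verbatim. If one wants the statement for all complex points $f=\ell q$, the proof of \Cref{lem:syzygy-factorization} carries over unchanged to $\CC$: the coordinate change sending $\ell\mapsto x_1$ and the two-case construction of a trace-zero annihilator $D$ use only linear algebra. Alternatively, one can argue by Zariski density: the maximal minors vanish on the real image, which is dense in the complex parametrized set.

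Second, apply \Cref{lem:syzygy-factorization}, which needs $d\geq3$ (assumed). It produces a nonzero $D\in\mathfrak{sl}_d$ with $D(f)=0$. Hence $\ker\Psi_f\neq0$, so $\rank M_{\mathrm{Lie}}(f)<d^2-1$, and every maximal $(d^2-1)\times(d^2-1)$ minor vanishes.

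The only potential obstacle is this real-versus-complex bookkeeping, and it is handled as described above. Everything else is direct citation of the two earlier results.
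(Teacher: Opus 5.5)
Your proposal is correct and follows the paper's proof: you obtain the factorization $f=\ell q$ from \Cref{prop:single-column-structure} and the rank deficiency of $M_{\mathrm{Lie}}(f)$ from \Cref{lem:syzygy-factorization}. Your remark about the ground field, that the lemma's argument works verbatim over $\CC$ or follows by Zariski density, is sound bookkeeping that the paper leaves implicit.
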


\begin{proof}
The factorization follows from \Cref{prop:single-column-structure}, and
the rank deficiency of $M_{\mathrm{Lie}}(f)$ follows from
\Cref{lem:syzygy-factorization}.
\end{proof}

\begin{remark}
In the tensor geometry literature, equations for Chow varieties are
often obtained from Koszul flattenings \cite{landsberg2012tensors}.
Here we use a different construction, namely the Lie algebra flattening
$M_{\mathrm{Lie}}(f)$ arising from the infinitesimal
$\mathfrak{sl}_d$-action on $S^3U^*$. Its kernel records linear
syzygies among the first partial derivatives of $f$, and
\Cref{lem:syzygy-factorization} shows that the split structure of the
single-column cubics forces this kernel to be nontrivial.

\end{remark}

The attention parametrization also imposes a more specific rank
constraint on the single-column coefficient tensor, depending on the rank of $A$. 

\begin{proposition}[Single-column rank constraint]
\label{prop:rank-bounds}
Let $T$ be the symmetric tensor of scaled single-column coefficients,
where $T_{k_1,k_2,k_3} = y_j(\{k_1,k_2,k_3\})$ is the scaled coefficient of the unordered monomial $x_{k_1j}x_{k_2j}x_{k_3j}$. 
Define
$$
N
=
(T_{k,k,k'})_{k,k'\in[d]}
\in\CC^{d\times d}.
$$
Then
$$
\rank N\leq 2\rank(A)+1.
$$
In particular, if $2a+1<d$, then all $(2a+2)\times(2a+2)$ minors of
$N$ vanish.
\end{proposition}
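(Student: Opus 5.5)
We will compute $T$ explicitly in terms of $v$ and $A$ and write $N$ as a sum of few low-rank matrices. By \Cref{prop:sums-monomials} and the scaling in \eqref{eq:scaled-coefficients}, the scaled single-column coefficient $y_j(\mathcal K)$ equals the average of $a_{p_2p_3}v_{p_1}$ over the distinct permutations of $\mathcal K$. Averaging over the distinct permutations is the same as averaging over all $3!$ orderings of $(k_1,k_2,k_3)$. Writing $S=\tfrac12(A+A^\top)$, we therefore get the symmetrization formula
\[
T_{k_1,k_2,k_3}=\tfrac13\bigl(v_{k_1}S_{k_2k_3}+v_{k_2}S_{k_1k_3}+v_{k_3}S_{k_1k_2}\bigr).
\]
This is just the statement that $T$ is the symmetric tensor of $f=(v^\top x)(x^\top S x)$, consistent with \Cref{prop:single-column-structure}. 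The first step is to verify this identity carefully, including the multiset cases.

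Next we specialize to $(k_1,k_2,k_3)=(k,k,k')$:
\[
N_{kk'}=T_{k,k,k'}=\tfrac13\bigl(2v_kS_{kk'}+v_{k'}S_{kk}\bigr).
\]
Let $D_v=\diag(v)$ and let $s=(S_{11},\dots,S_{dd})^\top$ be the diagonal of $S$. Then
\[
N=\tfrac23\,D_vS+\tfrac13\,s\,v^\top .
\]
The first term has rank at most $\rank S\le \rank A+\rank A^\top\le 2\rank(A)$, by the argument in \Cref{prop:single-column-structure}. The second term has rank at most $1$. Subadditivity of rank then gives $\rank N\le 2\rank(A)+1$.

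For the final claim, note that on $\operatorname{im}(\mu)$ we have $\rank(A)\le a$, so $\rank N\le 2a+1$. Hence every $(2a+2)\times(2a+2)$ minor of $N$, a polynomial in the coordinates $y_j$, vanishes on the image. By continuity it then vanishes on the Zariski closure. These minors are nontrivial constraints only when $2a+2\le d$, that is, when $2a+1<d$.

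The only delicate step is the first one: we must check that the orbit-size normalization yields exactly the full symmetrization. The relevant count is that each distinct permutation of $\mathcal K$ arises from $3!/|\operatorname{Perm}(\mathcal K)|$ orderings. The rest is routine.
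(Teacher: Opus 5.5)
Your proof is correct and is essentially the paper's argument. The paper writes $N=\tfrac13\diag(v)A+\tfrac13\diag(v)A^\top+\tfrac13\diag(A)v^\top$ and applies rank subadditivity to these three terms. That is your decomposition $N=\tfrac23 D_vS+\tfrac13 s\,v^\top$ with the first two terms merged into $D_vS$ (note $\diag(S)=\diag(A)$). Your check that the orbit-size normalization gives the full $S_3$-symmetrization supplies the step the paper summarizes as ``evaluating the coefficients.''
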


\begin{proof}
Evaluating the coefficients gives
\begin{align*}
N
&=
\frac13(a_{kk'}v_k)_{k,k'\in[d]}
+
\frac13(a_{k'k}v_k)_{k,k'\in[d]}
+
\frac13(a_{kk}v_{k'})_{k,k'\in[d]}\\
&=
\frac13\diag(v)A
+
\frac13\diag(v)A^\top
+
\frac13\diag(A)v^\top.
\end{align*}
Here $\diag(A)$ denotes the column vector of diagonal entries of $A$,
while $\diag(v)$ denotes the diagonal matrix with diagonal $v$.
Therefore, by subadditivity of rank,
$$
\rank N
\leq
\rank(\diag(v)A)
+
\rank(\diag(v)A^\top)
+
\rank(\diag(A)v^\top)
\leq
2\rank(A)+1.
$$
\end{proof}

\begin{example}[$d=a=3$, $d'=t=1$]
\label{ex:type2-relation}
Consider the smallest nontrivial case $d=a=3$ and $d'=t=1$. 
The single-column cubic is of split type $(2,1)$, so
\Cref{cor:single-column-chow} implies that its $10\times8$ Lie algebra
flattening matrix $M_{\mathrm{Lie}}(f)$ is rank-deficient. 
Hence all forty-five $8\times8$ maximal minors vanish. 
A \texttt{Macaulay2} computation shows that the defining ideal has
codimension $2$ and is minimally generated by $35$ degree-$8$ maximal
minors of $M_{\mathrm{Lie}}(f)$. 
The explicit coefficient parametrization, the Lie algebra flattening matrix, and the computation are given in \Cref{app:single-column-example}. Since $2a+1 = 7 > 3 =d$, \Cref{prop:rank-bounds} yields no additional
determinantal constraints.
\end{example}

\begin{remark}
The rank-deficiency condition provided by the Lie algebra flattening is necessary, but not sufficient, for the factorization in \Cref{prop:single-column-structure}.
Indeed, its kernel consists of trace-zero linear differential operators
that annihilate $f$:
$$
\ker(\Psi_f)
=
\mathfrak{stab}(f)\cap\mathfrak{sl}_d,
$$
where
$$
\mathfrak{stab}(f)
=
\{D\in\mathfrak{gl}_d:D(f)=0\}.
$$
Factorization as in \Cref{prop:single-column-structure} guarantees the
existence of such an operator, so 
rank deficiency of
$M_{\mathrm{Lie}}(f)$ is a necessary condition for membership in the
single-column variety. 
However, such operators may also arise for other
reasons.

For example, when $d=4$, the cubic
$
f=x_1^3+x_2^3+x_3^3
$
is independent of $x_4$. Hence the nonzero operator
$
x_1\frac{\partial}{\partial x_4}\in\mathfrak{sl}_4
$
annihilates $f$, so $M_{\mathrm{Lie}}(f)$ drops rank. In fact,
$\rank M_{\mathrm{Lie}}(f)=12$, whereas the generic rank is $15$.
However, $f$ is irreducible, so it has no linear factor and therefore
does not belong to the single-column variety. Thus, the Lie algebra
flattening equations generally cut out a strictly larger locus. 
\end{remark}

\begin{corollary}
\label{cor:rank-one-single-column}
If $a=1$, then the single-column variety is the affine cone over the
Chow variety of completely reducible cubic forms. Equivalently, every
nonzero single-column cubic factors as a product of three linear forms,
and every product of three linear forms arises in this way.
\end{corollary}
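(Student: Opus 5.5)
The plan is to specialize \Cref{prop:single-column-structure} to $a=1$: the single-column cubics are then exactly the products $\ell q$ with $\ell\in\CC[x]_1$ and $\rank(q)\leq 2$. The whole argument reduces to one elementary fact: over $\CC$, a quadratic form has rank at most $2$ exactly when it is a product of two linear forms (possibly equal, possibly zero).

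For the forward direction, take $q$ with $\rank(q)=r\leq 2$. As in the proof of \Cref{prop:single-column-structure}, choose coordinates in which $q=z_1^2+\cdots+z_r^2$. The three cases are:
\begin{itemize}
\item $r=0$: then $q=0$ and $f=0$;
\item $r=1$: then $q=z_1\cdot z_1$;
\item $r=2$: then $q=(z_1+iz_2)(z_1-iz_2)$.
\end{itemize}
In each case $q=m_1m_2$ for linear forms $m_1,m_2$, so every nonzero single-column cubic is $f=\ell m_1 m_2$. This product of three linear forms represents a point of the Chow variety of completely reducible cubics.

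For the converse, take linear forms $\ell_1,\ell_2,\ell_3$. The product $\ell_2\ell_3$ has symmetric matrix $\tfrac12(u_2u_3^\top+u_3u_2^\top)$, which has rank at most $2$. \Cref{prop:single-column-structure} then gives that $\ell_1\ell_2\ell_3$ is a single-column cubic. The direct construction is just as easy: set $A=u_2u_3^\top$, which has rank at most $1$ and satisfies $x^\top A x=\ell_2\ell_3$, and set $v=u_1$.

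The main point needing care is the phrase ``affine cone over the Chow variety.'' The single-column variety should be read as the Zariski closure of the image; alternatively, note that the set of all products $\ell_1\ell_2\ell_3$ is already closed. That set is the image of the proper morphism $\PP(U^*)^3\to\PP(S^3U^*)$, together with the cone vertex $0$, and the image of a proper morphism is closed. So the image, its closure, and the affine cone over the Chow variety all coincide. The remaining steps, the rank computation and the factorization of rank-two quadrics, are routine.
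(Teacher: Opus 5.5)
Your proposal is correct and follows the paper's proof essentially verbatim: you specialize \Cref{prop:single-column-structure} to $a=1$, factor a quadric of rank at most $2$ over $\CC$ into two linear forms, and for the converse take $v=u_1$ and the rank-one matrix $A=u_2u_3^\top$. Your extra observation, that the set of products of three linear forms is already Zariski closed as the affine cone over the image of the projective variety $\PP(U^*)^3$, is a worthwhile addition: the paper leaves this point implicit when it identifies the image with the affine cone over the Chow variety.
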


\begin{proof}
By \Cref{prop:single-column-structure}, every single-column cubic has
the form $f=\ell q$ with $\rank(q)\leq2$. Over $\CC$, every nonzero
quadratic form of rank at most $2$ factors as a product of two linear
forms. Hence $f$ is a product of three linear forms.

Conversely, given three linear forms $\ell_1,\ell_2,\ell_3$, choose
$v$ so that $v^\top x=\ell_1$ and choose a rank-one matrix $A$ such
that
$
x^\top A x=\ell_2\ell_3.
$
Then the corresponding single-column cubic is
$f=\ell_1\ell_2\ell_3$.
\end{proof}

\begin{example}\label{ex:type1-relation}
Let $d=4$, $t=1$, and $a=1$, and let
$\mathcal V:=\overline{\operatorname{im}(\mu)}$ denote the
corresponding attention variety. By
\Cref{cor:rank-one-single-column}, $\mathcal V$ is the affine cone over
the Chow variety of completely reducible cubic forms in four
variables. Since
$\dim\mathbb P((\mathbb C^4)^*)=3$, this cone has dimension
$3+3+3+1=10$. Next consider the Lie algebra flattening matrix
$
M_{\mathrm{Lie}}(f)\in\operatorname{Mat}_{20\times15}
$
representing
$\Psi_f:\mathfrak{sl}_4\to S^3\mathbb C^4$.
For a general point of $\mathcal V$, after a linear change of
coordinates we may take $f=x_1x_2x_3$. Its infinitesimal stabilizer in
$\mathfrak{sl}_4$ has dimension~$5$: it contains the three operators
$E_{14},E_{24},E_{34}$ together with the two-dimensional trace-zero
diagonal subspace
$$
\left\{
\operatorname{diag}(\lambda_1,\lambda_2,\lambda_3,0):
\lambda_1+\lambda_2+\lambda_3=0
\right\}.
$$
Hence
$
\rank M_{\mathrm{Lie}}(f)=15-5=10,
$
so all $11\times11$ minors of $M_{\mathrm{Lie}}(f)$ vanish on
$\mathcal V$.

The attention-specific rank constraint of
\Cref{prop:rank-bounds} gives a lower-degree equation. Consider
$$
N=
\begin{pmatrix}
y_{111} & y_{112} & y_{113} & y_{114} \\
y_{122} & y_{222} & y_{223} & y_{224} \\
y_{133} & y_{233} & y_{333} & y_{334} \\
y_{144} & y_{244} & y_{344} & y_{444}
\end{pmatrix}.
$$
Since $\rank(A)\leq1$, we have
$
\rank(N)\leq2\rank(A)+1\leq3.
$
Therefore
$
\det(N)=0
$
is a quartic equation for $\mathcal V$. A \texttt{Macaulay2} tangent-space computation at
$f=x_1x_2x_3$ shows that the locus
$
\{f:\rank M_{\mathrm{Lie}}(f)\leq 10\}
$
has codimension $10$ at this point. Since $\mathcal V$ is
irreducible of dimension $10$ and is contained in this locus,
$\mathcal V$ is the irreducible component through $f=x_1x_2x_3$.
The corresponding computation is available in the accompanying
repository.
A complete implicitization is computationally infeasible in this
example.
\end{example}

\begin{remark}
By \Cref{cor:rank-one-single-column}, the rank-one single-column
variety is the Chow variety of completely reducible cubic forms.
This variety is defined set-theoretically by the classical Brill
equations; see
\cite{briand2004brill,guan2018brill} and
\cite[Section~8.6]{landsberg2012tensors}.
The Lie algebra flattening and the matrix $N$ above provide equations
arising directly from the attention parametrization, but we do not
claim that these equations alone define the Chow~variety.
\end{remark}

\section{Cross-column monomials}\label{sec:cross-column}

The cross-column coefficients 
correspond to terms involving a context position $n$ and the target position $j$.
For a fixed pair $n\neq j$, these terms are linear in the target-column variables and quadratic in the context-column variables; 
grouping by target variables yields a tuple of quadratic forms in the context variables.
We first characterize the variety parametrized by these quadrics, and then derive 
pencil, flattening, Veronese-type, and resultant invariants as consequences of this structure. 

\begin{definition}[Slice quadrics]
\label{def:slice-quadrics}
Fix a target column $j\in[t]$ and a context column $n\in[t]\setminus\{j\}$. For each slice index $s\in[d]$, define the \emph{formal slice quadric}
\begin{equation}\label{eq:cross-column-quadrics}
q_s(y;z):=
\sum_{1\le p\le q\le d}
\omega(p,q)\,y_{(p,n),(q,n),(s,j)}\,z_pz_q,
\qquad
\omega(p,q)=
\begin{cases}
1,&p=q,\\
2,&p\neq q.
\end{cases}
\end{equation}
\end{definition}

\begin{definition}[Cross-column variety]\label{def:cross-column-variety}
Fix $n,j\in[t]$ with $n\neq j$. Fix $a \geq 1$.
The \emph{cross-column variety} $\mathcal C_a$ is the Zariski closure
$$
\mathcal C_a
=
\overline{
\left\{
\bigl(q_1(\mu(W);z),\ldots,q_d(\mu(W);z)\bigr)
:
W=(Q,K,V)
\right\}}
\subseteq
\bigl(\CC[z]_2\bigr)^d.
$$
\end{definition}

Thus, $\mathcal C_a$ records one representative cross-column coefficient block of the attention variety. For fixed $n\neq j$ and target index $s$, the coefficients of the cubic monomials $x_{pn}x_{qn}x_{sj}$ are packaged into the slice quadric $q_s(y;z)$ by replacing the context variables $x_{pn}$ with auxiliary variables $z_p$. By our scaling convention, the coefficient of $z_pz_q$ is exactly the coefficient of $x_{pn}x_{qn}x_{sj}$ in the attention output. Since the sequence-copy relations of \Cref{sec:linear} identify the blocks corresponding to different pairs $(n,j)$, it suffices to study the variety $\mathcal C_a$, whose structure is characterized below.

\begin{proposition}[Cross-column structure]
\label{prop:cross-column}
The cross-column variety admits the following structural description:
\begin{equation}\label{eq:cross-column-structure}
\mathcal C_a
=
\left\{
(\ell m_1,\ldots,\ell m_d):
\ell,m_1,\ldots,m_d\in\CC[z]_1,\ 
\dim\operatorname{Span}\{m_1,\ldots,m_d\}\leq a
\right\}.
\end{equation}
Thus, the slice quadrics have a common linear factor, and their remaining linear factors span a space of dimension at most $a$.
\end{proposition}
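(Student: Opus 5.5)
The plan is to compute the slice quadrics explicitly from \Cref{prop:sums-monomials}, read off the factorization, prove the reverse inclusion by a direct construction, and finally show that the resulting set is already Zariski closed. I expect the closedness step to be the main obstacle.

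\textbf{Step 1: explicit factorization.} I first compute $q_s(\mu(W);z)$ using \Cref{prop:sums-monomials} and the scaling \eqref{eq:scaled-coefficients}, with $d'=1$ and $V=v^\top$. For $p\neq q$ the scaled coefficient is $y_{(p,n),(q,n),(s,j)}=\tfrac12(a_{qs}v_p+a_{ps}v_q)$. Multiplying by $\omega(p,q)=2$ gives $a_{qs}v_p+a_{ps}v_q$ as the coefficient of $z_pz_q$. For $p=q$ the coefficient is simply $a_{ps}v_p$. Hence
\[
q_s(\mu(W);z)=\sum_{p,q\in[d]} v_p\,a_{qs}\,z_pz_q=\Bigl(\sum_p v_pz_p\Bigr)\Bigl(\sum_q a_{qs}z_q\Bigr)=\ell\, m_s,
\]
where $\ell=v^\top z$ and $m_s=(Ae_s)^\top z$ is the linear form encoded by the $s$th column of $A$. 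Thus $\operatorname{Span}\{m_1,\dots,m_d\}$ is identified with the column space of $A$, which has dimension at most $\rank(A)\le a$. This shows that the image of the parametrization lies in the right-hand side $S$ of \eqref{eq:cross-column-structure}.

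\textbf{Step 2: reverse inclusion.} Conversely, take any $(\ell m_1,\dots,\ell m_d)\in S$. Choose $v$ with $v^\top z=\ell$, and let $A$ be the matrix whose $s$th column is the coefficient vector of $m_s$. Then $\rank A\le a$, so over $\CC$ we can factor $A=K^\top Q$ with $K,Q\in\CC^{a\times d}$. By Step 1, this $W=(Q,K,V)$ realizes the given tuple, so $S$ equals the image of the complexified parametrization.

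\textbf{Step 3: the closure.} Two points remain.

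First, the definition of $\mathcal C_a$ takes the closure of the image of real parameters. Real points are Zariski dense in the complex parameter space, so the closure of the real image coincides with the closure of the complex image $S$.

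Second, and this is the main obstacle, I must show that $S$ is itself closed, not merely constructible. I would argue projectively. Consider the map
\[
\PP(\CC[z]_1)\times\PP(\{M\in\CC^{d\times d}:\rank M\le a\})\to\PP\bigl((\CC[z]_2)^d\bigr),\qquad ([\ell],[M])\mapsto[(\ell m_1,\dots,\ell m_d)].
\]
It is well defined and a morphism, because $\CC[z]$ is a domain: if $\ell\neq0$ and $M\neq0$, then some product $\ell m_s$ is nonzero. The source is projective, so by properness the image is closed. The set $S$ is the affine cone over this closed image together with the origin, and the map is bihomogeneous with scaling absorbable in $\ell$. Hence $S$ is closed, and $\mathcal C_a=S$.
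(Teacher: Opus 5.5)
Your proposal is correct and follows the paper's proof essentially step for step. It uses the explicit identity $q_s(\mu(W);z)=(v^\top z)(a_s^\top z)$, identifies $\operatorname{Span}\{m_1,\ldots,m_d\}$ with the column space of $A$, proves the converse by factoring $A=K^\top Q$, and gets closedness from the image of $\PP^{d-1}\times\{[A]:\rank(A)\le a\}$ under a morphism from a projective variety. Your extra remarks, on Zariski density of real parameters and on why the map has no base points, fill in details the paper leaves implicit but do not change the argument.
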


\begin{proof}
Let $v=(v_1,\ldots,v_d)^\top$ denote the unique row of $V$, and let
$a_s=(a_{1s},\ldots,a_{ds})^\top$ denote the $s$th column of $A$.
By \Cref{prop:sums-monomials} and the definition of the scaled coefficients, the coefficient of $z_p^2$ in $q_s(\mu(W);z)$ is $a_{ps}v_p$, while the coefficient of $z_pz_q$ for $p<q$ is $a_{qs}v_p+a_{ps}v_q$. These are exactly the coefficients of $(v^\top z)(a_s^\top z)$. Hence
$q_s(\mu(W);z)=(v^\top z)(a_s^\top z)$. Writing $\ell(z)=v^\top z$ and $m_s(z)=a_s^\top z$, we obtain
$q_s(\mu(W);z)=\ell m_s$ for every $s\in[d]$. Moreover, the linear forms
$m_s$ correspond to the columns of $A$, so
$\dim\operatorname{Span}\{m_1,\ldots,m_d\}=\rank(A)\leq a$.
This proves one inclusion in (\ref{eq:cross-column-structure}).

Conversely, suppose $(q_1,\ldots,q_d)$ is a tuple satisfying
$q_s=\ell m_s$ for $s\in[d]$, with $\dim\operatorname{Span}\{m_1,\ldots,m_d\}\leq a$. Write
$\ell(z)=v^\top z$ and $m_s(z)=a_s^\top z$, and let
$A=(a_1\ \cdots\ a_d)$. Then $\rank(A)\leq a$, so $A=K^\top Q$ for
some $K,Q\in\CC^{a\times d}$. Taking $V=v^\top$, the scaled
coefficient formulas give exactly the quadrics $q_1,\ldots,q_d$. Finally, the projectivization of the right-hand side of
(\ref{eq:cross-column-structure}) is the image of
$\PP^{d-1}\times
\{[A]\in\PP^{d^2-1}:\rank(A)\leq a\}$ under the morphism
$([\ell],[A])\mapsto[(\ell m_1,\ldots,\ell m_d)]$.
The domain is projective, so this image is Zariski closed. Its affine cone is therefore Zariski closed as well.
This proves~(\ref{eq:cross-column-structure}).
\end{proof}

\begin{corollary}[Reduction to the cross-column variety]
\label{cor:cross-column-reduction}
Assume $t\geq2$. After imposing the linear relations of
\Cref{sec:linear}, the attention variety is linearly isomorphic
to $\mathcal C_a$. In particular, the nonlinear single-column relations
are redundant.
\end{corollary}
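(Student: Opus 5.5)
The approach is to exhibit an explicit linear isomorphism between the linear subspace $L$ cut out by the linear relations of \Cref{sec:linear} and the ambient space $(\CC[z]_2)^d$ of slice quadrics, and to check that it carries $\mathcal V$ onto $\mathcal C_a$. By the convention following \Cref{rmk:reduction-to-one-coordinate}, we may take $d'=1$. The rowwise structure then lets the argument be repeated row by row (the rows are governed by independent rows of $V$ and a shared $A$; cross-row interactions are deferred to \Cref{app:cross-row}, consistent with the convention). So fix $d'=1$ and a representative pair $n\neq j$.

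\textbf{Step 1: the projection.} Let $\pi:L\to(\CC[z]_2)^d$ send a point to the tuple of formal slice quadrics $(q_1(y;z),\ldots,q_d(y;z))$ of \Cref{def:slice-quadrics}. This map only reads the block $\{y_{n,j}(\mathcal A,b)\}$. On $L$, every other coordinate is determined linearly by this block, in two ways. First, the sequence-copy relations identify every cross-column block $y_{n',j'}$ with the representative block $y_{n,j}$, and every single-column block $y_{j'}$ with $y_j$. Second, \Cref{prop:symmetrization} expresses each $y_j(\mathcal K)$ as a fixed linear combination of the $y_{n,j}(\mathcal A,b)$. Conversely, the coordinates of the representative block are free on $L$, because the linear relations impose no constraint among them: each relation either equates copies or solves for a single-column variable. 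Hence $\pi$ is a linear isomorphism $L\cong \CC^{d\binom{d+1}{2}}\cong(\CC[z]_2)^d$. Here we use that $\omega(p,q)$ is a nonzero rescaling, so the $q_s$ are in bijection with the block coordinates.

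\textbf{Step 2: matching the images.} Since $\operatorname{im}(\mu)\subseteq L$ and $L$ is closed, we have $\mathcal V\subseteq L$. Because $\pi$ is a linear isomorphism, it is a homeomorphism in the Zariski topology. Therefore
\[
\pi(\mathcal V)=\pi\bigl(\overline{\operatorname{im}\mu}\bigr)=\overline{\pi(\operatorname{im}\mu)}=\overline{\{(q_s(\mu(W);z))_s:W\}}=\mathcal C_a,
\]
by \Cref{def:cross-column-variety}. This gives the isomorphism. Its explicit form is $\mathcal C_a$ as described in \Cref{prop:cross-column}.

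\textbf{Step 3: redundancy.} Any polynomial relation among the single-column coordinates, for example the Chow-type minors of \Cref{cor:single-column-chow}, pulls back under $\pi^{-1}$ to a polynomial in the cross-column block that vanishes on $\mathcal C_a$. The ideal of $\mathcal V$ is therefore generated by the linear relations together with $\pi^{*}I(\mathcal C_a)$, so the single-column relations lie in the ideal generated by these and are redundant.

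The only delicate point is Step 1. There we must verify that the linear relations leave the representative cross-column block entirely free, so that $\pi$ is injective and surjective on $L$. This follows by ordering the variables so that each relation has a distinct pivot outside the representative block.
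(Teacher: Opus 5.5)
Your proposal is correct and follows the same route as the paper: the sequence-copy relations collapse all cross-column blocks onto one representative block, and \Cref{prop:symmetrization} solves for the single-column coordinates. You also supply details the paper leaves implicit, namely that $\pi$ is a linear isomorphism on $L$, that Zariski closure commutes with it, and the ideal-theoretic form $I(\mathcal V)=I(L)+\pi^{*}I(\mathcal C_a)$ of redundancy. One small caveat is your aside about repeating the argument row by row: for $d'>1$ the rows share $A$, so the variety is not a product of copies of $\mathcal C_a$, and the statement holds only under the $d'=1$ convention, which you do adopt.
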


\begin{proof}
The sequence-copy relations identify all cross-column blocks, while
\Cref{prop:symmetrization} expresses every single-column coefficient
linearly in terms of any representative cross-column block.
\end{proof}

We now study the defining equations of $\mathcal C_a$.

\subsection{Determinantal constraints}\label{subsec:determinantal-constraints}

We first record two direct consequences of (\ref{eq:cross-column-structure}).
The common linear factor gives rank-two pencil relations, while the bound
$\dim\operatorname{Span}\{m_1,\ldots,m_d\}\leq a$ gives a rank constraint
on the cross-column flattening.

For each $s\in[d]$, let $M_s(y)$ be the symmetric matrix representing
$q_s(y;z)$, so that $q_s(y;z)=z^\top M_s(y)z$.

\begin{proposition}[Rank-two pencil relations]
\label{prop:cross-column-pencil}
For every point of $\mathcal C_a$ and every
$\lambda=(\lambda_1,\ldots,\lambda_d)\in\CC^d$,
$$
\rank\left(\sum_{s=1}^d\lambda_s M_s\right)\leq 2.
$$
Consequently, the $3\times3$ minors of
$\sum_{s=1}^d\lambda_sM_s$ yield cubic equations for $\mathcal C_a$.
\end{proposition}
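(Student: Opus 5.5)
By \Cref{prop:cross-column}, every point of $\mathcal C_a$ has the form $(\ell m_1,\ldots,\ell m_d)$ with linear forms $\ell,m_1,\ldots,m_d\in\CC[z]_1$. The plan is to read off the rank bound directly from this factorization, working pointwise on the variety and then passing to equations.

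Fix such a point and $\lambda\in\CC^d$. By linearity of the map $q\mapsto M_q$ sending a quadratic form to its symmetric representing matrix, $\sum_s\lambda_sM_s$ represents the quadric
\[
\sum_{s=1}^d\lambda_s q_s=\ell\cdot m_\lambda,\qquad m_\lambda:=\sum_{s=1}^d\lambda_s m_s .
\]
Writing $\ell=u^\top z$ and $m_\lambda=w^\top z$, the symmetric matrix of $\ell m_\lambda$ is $\tfrac12(uw^\top+wu^\top)$. This matrix is a sum of two rank-at-most-one matrices, so subadditivity of rank gives rank at most $2$. This is the same argument used for $\tfrac12(A+A^\top)$ in \Cref{prop:single-column-structure}.

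For the equations, each $3\times 3$ minor of $\sum_s\lambda_sM_s(y)$ is a polynomial in $(y,\lambda)$. It is cubic in the entries of $M_s(y)$, and these entries are linear in the coordinates $y$ (up to the fixed weights $\omega$). By the bound above, the minor vanishes at every point of $\mathcal C_a$ for every $\lambda$. Expanding in monomials of $\lambda$, each $\lambda$-coefficient is then a cubic polynomial in $y$ vanishing on $\mathcal C_a$, so it lies in the vanishing ideal. Alternatively, for each fixed $\lambda$ the minor itself is already a cubic in $y$ vanishing on $\mathcal C_a$.

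No step is a real obstacle. The only point needing care is that the rank statement is claimed for all of $\mathcal C_a$, not just the image of $\mu$. This is handled by \Cref{prop:cross-column}, which shows the factored set is already Zariski closed, so it equals $\mathcal C_a$. Alternatively, since the minors vanish on the image of $\mu$, they vanish on its closure.
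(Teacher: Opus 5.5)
Your proposal is correct and follows essentially the same route as the paper. You use \Cref{prop:cross-column} to write $\sum_s\lambda_sM_s$ as the matrix $\tfrac12(uw^\top+wu^\top)$ of $\ell\cdot m_\lambda$, which has rank at most $2$, and then read off cubic equations from the $\lambda$-coefficients of the $3\times3$ minors. One small remark: the entries of $M_s(y)$ are exactly the coordinates $y_{(p,n),(q,n),(s,j)}$, because $\omega(p,q)/2=1$ off the diagonal, so no weights actually appear.
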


\begin{proof}
By \Cref{prop:cross-column}, we may write
$q_s=\ell m_s$, where $\ell(z)=v^\top z$ and $m_s(z)=a_s^\top z$. Hence
$$
M_s=\frac{1}{2}(va_s^\top+a_sv^\top).
$$

Any matrix $M$ in the span of $M_1,\ldots,M_d$ can be written as
$M=\sum_{s=1}^d\lambda_sM_s$ for some $\lambda_1,\ldots,\lambda_d\in\CC$.
By linearity,
$$
M
=
\frac{1}{2}\left(
v\left(\sum_{s=1}^d\lambda_sa_s\right)^\top
+
\left(\sum_{s=1}^d\lambda_sa_s\right)v^\top
\right).
$$
Setting $u=\sum_{s=1}^d\lambda_sa_s$, we obtain
$M=\frac{1}{2}(vu^\top+uv^\top)$. Since $M$ is the sum of two rank-one
matrices, its rank is at most $2$.

Therefore, all $3\times3$ minors of
$\sum_{s=1}^d\lambda_sM_s$ vanish. By multilinearity of the determinant,
each such minor is a homogeneous cubic polynomial in
$\lambda_1,\ldots,\lambda_d$. Since it vanishes for every~$\lambda$, each
of its coefficients vanishes, giving cubic equations in the
cross-column coefficients.
\end{proof}

We next use the bound
$\dim\operatorname{Span}\{m_1,\ldots,m_d\}\leq a$ in
(\ref{eq:cross-column-structure}). Define the cross-column flattening
$F(y)$ to be the $\binom{d+1}{2}\times d$ matrix whose rows are indexed
by $\{p,q\}\in\operatorname{Mult}_2([d])$ and whose columns are indexed
by $s\in[d]$, with
\begin{equation}\label{eq:cross-column-flattening}
F(y)_{\{p,q\},s}=y_{(p,n),(q,n),(s,j)}.
\end{equation}
Thus, the $s$th column of $F(y)$ is the vectorized upper-triangular
part of $M_s(y)$.

\begin{proposition}[Flattening rank constraints]
\label{prop:cross-column-flattening}
For every point of $\mathcal C_a$,
$
\rank F(y)\leq a.
$
Consequently, if $a<d$, all $(a+1)\times(a+1)$ minors of $F(y)$ vanish
on $\mathcal C_a$.
\end{proposition}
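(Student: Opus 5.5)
The plan is to exhibit $F(y)$, at every point of the image of the parametrization, as a product of a $\binom{d+1}{2}\times d$ matrix depending only on $v$ and the attention matrix $A$. Since rank conditions are closed, the bound then passes to the Zariski closure $\mathcal C_a$. Alternatively, I will use the structural description in \Cref{prop:cross-column} directly: every point of $\mathcal C_a$ is a tuple $(\ell m_1,\ldots,\ell m_d)$ with $\ell=v^\top z$, $m_s=a_s^\top z$, and $\dim\operatorname{Span}\{m_1,\ldots,m_d\}\le a$. This is an equality of sets, with no closure needed.

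\textbf{Key step.} Consider the linear map
\[
\Lambda_\ell:\CC[z]_1\to\CC[z]_2,\qquad m\mapsto \ell m,
\]
followed by taking coordinates in the monomial basis. Here the coordinate indexed by $\{p,q\}$ is the scaled coefficient, so it matches the entry $y_{(p,n),(q,n),(s,j)}$. By \Cref{def:slice-quadrics} the coefficient of $z_pz_q$ in $q_s$ is $\omega(p,q)\,y_{(p,n),(q,n),(s,j)}$, so the $s$th column of $F(y)$ equals $\Omega^{-1}\,\mathrm{coef}(\ell m_s)$, where $\Omega$ is the invertible diagonal matrix of weights $\omega(p,q)$. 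Hence
\[
F(y)=\Omega^{-1} L_\ell\, A,
\]
where $L_\ell$ is the $\binom{d+1}{2}\times d$ matrix of $\Lambda_\ell$ and $A=(a_1\ \cdots\ a_d)$ has as columns the coefficient vectors of $m_1,\ldots,m_d$.

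\textbf{Conclusion.} Submultiplicativity gives $\rank F(y)\le\rank A=\dim\operatorname{Span}\{m_s\}\le a$. An equivalent argument avoids matrices: the columns of $F(y)$ are images under a single linear map of the vectors $a_s$, so their span has dimension at most $\dim\operatorname{Span}\{a_s\}$. Finally, when $a<d$ the matrix $F(y)$, which has $d$ columns, has rank at most $a$, so all its $(a+1)\times(a+1)$ minors vanish at every point of $\mathcal C_a$.

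\textbf{Main obstacle.} The only delicate point is verifying that the scaling conventions make each column of $F(y)$ a fixed linear image of $a_s$, independent of $s$. The factor $\omega$ and the scaled coefficients from \eqref{eq:scaled-coefficients} must be consistent. This follows from the explicit formula in the proof of \Cref{prop:cross-column}: $y_{(p,n),(q,n),(s,j)}=\tfrac12(a_{qs}v_p+a_{ps}v_q)$, which is linear in $a_s$ with coefficients depending only on $v$.
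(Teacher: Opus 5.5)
Your proof is correct and is essentially the paper's argument: both use \Cref{prop:cross-column} to see that the $s$th column of $F(y)$ is the image of the column $a_s$ of $A$ under a single linear map depending only on $v$, so $\rank F(y)\le\rank A\le a$. The paper unpacks your factorization $F=\Omega^{-1}L_\ell A$, which is the identity $F=\Phi(v)A$ it uses later in the proof of \Cref{thm:context_syzygies}: it writes $A=\sum_{r=1}^a u^{(r)}(w^{(r)})^\top$ and exhibits $F(y)$ as a sum of $a$ rank-one matrices, rather than invoking submultiplicativity of rank.
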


\begin{proof}
By \Cref{prop:cross-column}, write
$q_s=\ell m_s$, with $\ell(z)=v^\top z$ and $m_s(z)=a_s^\top z$,
and let $A=(a_1\ \cdots\ a_d)$. Since
$\dim\operatorname{Span}\{m_1,\ldots,m_d\}\leq a$, we have
$\rank(A)\leq a$, so we can express $A$ as the sum of at most $a$ outer products
for some $u^{(r)},w^{(r)}\in\CC^d$:
$$
A=\sum_{r=1}^a u^{(r)}(w^{(r)})^\top.
$$
Thus, the $s$th column of $A$ is
$
a_s=\sum_{r=1}^a w_s^{(r)}u^{(r)}.
$
Since $q_s(z)=(v^\top z)(a_s^\top z)$, the symmetric matrix representing
$q_s$ is
$$
M_s=\frac12(va_s^\top+a_sv^\top).
$$
Substituting the expansion of $a_s$ gives
$$
M_s
=
\sum_{r=1}^a w_s^{(r)}
\left[
\frac12\left(v(u^{(r)})^\top+u^{(r)}v^\top\right)
\right].
$$
For each $r\in[a]$, define
$$
S^{(r)}
=
\frac12\left(v(u^{(r)})^\top+u^{(r)}v^\top\right).
$$
This matrix is independent of $s$. 
By (\ref{eq:cross-column-flattening}), the $s$th column of $F(y)$ is
the vectorized upper-triangular part of $M_s$. Hence, for every
$\{p,q\}\in\operatorname{Mult}_2([d])$ and $s\in[d]$,
$$
F(y)_{\{p,q\},s}
=
(M_s)_{p,q}
=
\sum_{r=1}^a w_s^{(r)}S^{(r)}_{p,q}.
$$
Hence
$$
F(y)=\sum_{r=1}^a s^{(r)}(w^{(r)})^\top,
$$
where $s^{(r)}\in\CC^{\binom{d+1}{2}}$ is the vectorized
upper-triangular part of $S^{(r)}$. Since $F(y)$ is the sum of at most
$a$ rank-one matrices, $\rank F(y)\leq a$. Hence all
$(a+1)\times(a+1)$ minors vanish.
\end{proof}

\paragraph{The rank-one case}

When $a=1$, the two determinantal conditions above are also sufficient 
and therefore characterize the cross-column variety
set-theoretically.

\begin{theorem}
\label{thm:rank-one-cross-column}
A tuple $(q_1,\ldots,q_d)$ lies in $\mathcal C_1$ if and only if
$$
\rank F(y)\leq 1
\qquad\text{and}\qquad
\rank\left(\sum_{s=1}^d\lambda_sM_s(y)\right)\leq 2
\quad\text{for every }\lambda\in\CC^d.
$$
Equivalently, $\mathcal C_1$ is cut out set-theoretically by the
$2\times2$ minors of $F(y)$ together with the cubic pencil relations of
\Cref{prop:cross-column-pencil}.
\end{theorem}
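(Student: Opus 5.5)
The forward direction is immediate from \Cref{prop:cross-column-flattening} with $a=1$ and from \Cref{prop:cross-column-pencil}. The plan for the converse is to use the rank-one flattening condition to collapse the whole tuple onto a single quadric, and then use the pencil condition to factor that quadric. The equivalence with the minors then follows because these vanishing conditions describe exactly the two rank conditions. For the pencil relations, the coefficients of the $3\times 3$ minors as polynomials in $\lambda$ vanish if and only if every minor vanishes for all $\lambda$.

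First suppose $\rank F(y)\le 1$. If $F(y)=0$, all $q_s=0$, and this tuple lies in $\mathcal C_1$ (take $\ell=0$). Otherwise $F(y)=f w^\top$ for a nonzero vector $f\in\CC^{\binom{d+1}{2}}$ and some $w\in\CC^d$. Since the $s$th column of $F(y)$ is the upper-triangular part of $M_s$, this says $M_s=w_s M$ for a single nonzero symmetric matrix $M$, that is, $q_s=w_s q$ with $q=z^\top M z$.

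Next apply the pencil condition. Choose $\lambda$ with $\sum_s\lambda_s w_s=1$, which is possible because $w\neq 0$. This gives $\rank M\le 2$. Over $\CC$, a nonzero quadratic form of rank at most $2$ factors as a product of two linear forms, $q=\ell m$. This is the same fact used in \Cref{cor:rank-one-single-column}: diagonalize to $z_1^2+z_2^2=(z_1+iz_2)(z_1-iz_2)$, or to $z_1^2$. Then $q_s=\ell\,(w_s m)$, and the forms $m_s:=w_s m$ span a space of dimension at most $1$. By \Cref{prop:cross-column}, the tuple lies in $\mathcal C_1$.

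The only real subtlety is the step that turns the flattening rank condition into the proportionality $M_s=w_sM$. This needs the observation that the upper-triangular vectorization determines a symmetric matrix uniquely. It also requires keeping track of the factor $\omega(p,q)$: the rows of $F(y)$ record $y$-coordinates, and these are the symmetric matrix entries, so the identification is exact. Notably, in the rank-one case only a single member of the pencil is actually needed, so the pencil condition is stronger than necessary. The proof therefore only has to confirm that the stated equations imply it.
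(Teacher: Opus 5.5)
Your proposal is correct and follows the paper's proof essentially step for step: from $\rank F(y)\le 1$ you get $q_s=w_s q$, the pencil condition gives $\rank(q)\le 2$, you factor $q=\ell m$ over $\CC$, and you conclude via the structural description in \Cref{prop:cross-column}. Your treatment of the zero tuple, the explicit choice of $\lambda$ with $\sum_s\lambda_s w_s=1$, and the check that the $y$-coordinates are exactly the entries of $M_s$ (so the factor $\omega(p,q)$ causes no mismatch) spell out small details that the paper leaves implicit.
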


\begin{proof}
The two conditions hold on $\mathcal C_1$ by
\Cref{prop:cross-column-pencil,prop:cross-column-flattening}.
Conversely, $\rank F(y)\leq1$ implies that
$q_s=c_sq$ for some quadratic form $q$ and scalars $c_s$. The pencil
condition gives $\rank(q)\leq2$. 
Over $\CC$, such a quadratic factors as
$q=\ell m$ for linear forms $\ell$ and $m$: after a linear change of coordinates, a nonzero
quadratic of rank at most $2$ is either $z_1^2$ or
$z_1^2+z_2^2=(z_1+iz_2)(z_1-iz_2)$.
Hence
$q_s=\ell(c_sm)$ for every $s$, and
(\ref{eq:cross-column-structure}) gives
$(q_1,\ldots,q_d)\in\mathcal C_1$.
\end{proof}

\begin{example}\label{ex:3121}
Fix $d=3$, $d'=1$, $t=2$, and $a=1$, and take the target and context
columns to be $j=1$ and $n=2$. For brevity, write
$$
y_{pqs}:=y_{(p,2),(q,2),(s,1)},
\qquad 1\leq p\leq q\leq3,\quad s\in[3].
$$
Then
$$
M_s=
\begin{pmatrix}
y_{11s} & y_{12s} & y_{13s}\\
y_{12s} & y_{22s} & y_{23s}\\
y_{13s} & y_{23s} & y_{33s}
\end{pmatrix},
\qquad
F(y)=
\begin{pmatrix}
y_{111} & y_{112} & y_{113}\\
y_{121} & y_{122} & y_{123}\\
y_{131} & y_{132} & y_{133}\\
y_{221} & y_{222} & y_{223}\\
y_{231} & y_{232} & y_{233}\\
y_{331} & y_{332} & y_{333}
\end{pmatrix}.
$$

By \Cref{thm:rank-one-cross-column}, the $2\times2$ minors of $F(y)$
together with the cubic pencil relations cut out the cross-column
variety $\mathcal C_1$ set-theoretically. Since $F(y)$ is $6\times3$,
\Cref{prop:cross-column-flattening}~gives
$
\binom{6}{2}\binom{3}{2}=45
$
quadratic equations.

For the pencil relations, there is a single $3\times3$ minor, namely
$$
\det(\lambda_1M_1+\lambda_2M_2+\lambda_3M_3).
$$
It vanishes identically in the three auxiliary variables
$\lambda_1,\lambda_2,\lambda_3$, one for each of the three slices.
Its determinant is a homogeneous cubic in these variables, so
equating its
$
\binom{3+3-1}{3}=10
$
coefficients to zero gives $10$ cubic equations.

A computation in \texttt{Macaulay2} shows that, together with the
$10$ linear relations from \Cref{sec:linear}, these $45$ quadrics and
$10$ cubics form a minimal generating set for the defining ideal $J$.
The corresponding computation is available in the accompanying
repository.
\end{example}

\subsection{Cross-column determinantal syzygies}

The flattening constraints of \Cref{prop:cross-column-flattening} arise
from the dimension bound $\dim\operatorname{Span}\{m_1,\ldots,m_d\}\leq a$.
When $a\geq d$, this bound imposes no rank restriction on the
$d$ columns of $F(y)$. However, the common linear factor in
(\ref{eq:cross-column-structure}) gives the flattening a special
factorization, which forces linear relations among its maximal~minors. Unlike the pencil constraints of \Cref{prop:cross-column-pencil}, which
bound the rank of linear combinations of the slice matrices $M_s$,
these relations arise from the factorization of the full flattening.

\begin{theorem}[Cross-column determinantal syzygies]
\label{thm:context_syzygies}
Let $d\geq2$ and assume $a\geq d$. Writing
$D=\binom{d+1}{2}$, there exist at least
$
\binom{D}{d}-\binom{2d-1}{d}
$
linearly independent degree-$d$ equations for $\mathcal C_a$, arising
as linear combinations of the maximal $d\times d$ minors of $F(y)$.
\end{theorem}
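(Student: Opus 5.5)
The plan is to exploit the factorization of the flattening forced by the common linear factor. By \Cref{prop:cross-column}, every point of $\mathcal C_a$ satisfies $q_s=\ell m_s$ with $\ell=v^\top z$ and $m_s=a_s^\top z$. The symmetric matrix of $q_s$ is $M_s=\frac12(va_s^\top+a_sv^\top)$, and the entries of $M_s$ are linear in the vector $a_s$ once $v$ is fixed. Hence
\[
F(y)=L(v)\,A,
\]
where $L(v)$ is the $D\times d$ matrix of the linear map $\CC^d\to\CC^D$, $u\mapsto$ the vectorized upper-triangular part of $\frac12(vu^\top+uv^\top)$, and $A=(a_1\ \cdots\ a_d)$. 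This is the structure I would use throughout. No rank bound on $A$ is needed, which is consistent with the assumption $a\geq d$.

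The first step is Cauchy--Binet. For each $d$-subset $I$ of the $D$ rows, it gives
\[
\det F(y)_{I}=\det L(v)_{I}\cdot\det A .
\]
So the vector of maximal minors $(\det F_I)_I\in\CC^{\binom Dd}$ equals $\det A$ times the Plücker vector of the column space of $L(v)$. The entries of $L(v)$ are linear in $v$, so each $\det L(v)_I$ is a form of degree $d$ in $v$. Consequently, on $\mathcal C_a$ the minor vector always lies in the linear span $W\subseteq\CC^{\binom Dd}$ of the polynomial vectors $v\mapsto(\det L(v)_I)_I$. Every linear functional vanishing on $W$ then gives a degree-$d$ equation on $\mathcal C_a$ that is a linear combination of maximal minors. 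The number of linearly independent such equations is $\binom Dd-\dim W$.

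It therefore suffices to show $\dim W\le\binom{2d-1}{d}$. The space of degree-$d$ forms in $d$ variables has dimension $\binom{2d-1}{d}$. The span $W$ is the image of the linear map from the dual of that space sending a functional $\phi$ to $(\phi(\det L(v)_I))_I$. Equivalently, $W$ has dimension equal to the rank of the $\binom Dd\times\binom{2d-1}{d}$ coefficient matrix that lists each $\det L(v)_I$ in the monomial basis of $\CC[v]_d$. That rank is at most $\binom{2d-1}{d}$, which gives the count.

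To finish, I would note that independent functionals annihilating $W$ give linearly independent polynomials in $y$. This holds because distinct maximal minors of the generic $D\times d$ matrix $F(y)$ are linearly independent as polynomials, so the map from coefficient vectors to polynomials is injective.

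The main obstacle I expect is the step that might otherwise seem to require computing $\dim W$ exactly. Only the upper bound is needed, and the bound by the dimension of $\CC[v]_d$ is immediate, so the real care goes into correctly identifying the factorization $F=L(v)A$ in the scaled coordinates. In particular, I need to check that the off-diagonal scaling by $\frac12$ is consistent with \eqref{eq:cross-column-flattening}, so that $L(v)$ really is linear in $v$ and independent of $s$.
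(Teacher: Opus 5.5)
Your proposal is correct and takes essentially the same route as the paper. Both arguments use the factorization $F(\mu(W))=\Phi(v)A$ (your $L(v)$ is exactly the paper's $\Phi(v)$), the identity $\det F_R=\det\Phi_R(v)\det A$ for each $d$-row selection $R$, the bound $\binom{2d-1}{d}$ on the span of the degree-$d$ forms $\det\Phi_R(v)$, and the linear independence of the maximal minors of the generic matrix $F(y)$. Since $A$ is square, your appeal to Cauchy--Binet is just multiplicativity of the determinant.
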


\begin{proof}
Write $v=(v_1,\ldots,v_d)^\top$. By \Cref{prop:sums-monomials},
evaluating the flattening on the parametrization gives
$$
F(\mu(W))[\{k_1,k_2\},l]
=
\frac{1}{2}
\bigl(a_{k_2l}v_{k_1}+a_{k_1l}v_{k_2}\bigr).
$$
The right-hand side is linear in the $l$th column of $A$, with
coefficients depending only on $v$. Define $\Phi(v)$ to be the
$D\times d$ matrix whose row indexed by $\{k_1,k_2\}$ has, when
$k_1\neq k_2$, zeros everywhere except at column $k_1$, where it has
entry $v_{k_2}/2$, and at column $k_2$, where it has entry
$v_{k_1}/2$. If $k_1=k_2$, the corresponding row has the single
nonzero entry $v_{k_1}$ at column~$k_1$. Then
$$
(\Phi(v)A)[\{k_1,k_2\},l]
=
\frac{1}{2}
\bigl(a_{k_2l}v_{k_1}+a_{k_1l}v_{k_2}\bigr)
=
F(\mu(W))[\{k_1,k_2\},l],
$$
where for $k_1=k_2$ the expression reduces to
$a_{k_1l}v_{k_1}$. Hence
$$
F(\mu(W))=\Phi(v)A.
$$

Any maximal $d\times d$ minor of $F(\mu(W))$ formed by selecting $d$
rows $R\subset[D]$ therefore evaluates to
$$
\det(F_R(\mu(W)))=\det(\Phi_R(v))\det(A).
$$
Since the entries of $\Phi(v)$ are linear in $d$ variables,
$\det(\Phi_R(v))$ is a homogeneous polynomial of degree $d$ in those
variables. The vector space of homogeneous polynomials of degree $d$
in $d$ variables has dimension
$
\binom{2d-1}{d}.
$

The matrix $F(y)$ has $\binom{D}{d}$ maximal minors. Since their
evaluations lie, after factoring out the common factor $\det(A)$, in a
vector space of dimension at most $\binom{2d-1}{d}$, there exist at
least
$
\binom{D}{d}-\binom{2d-1}{d}
$
linearly independent linear relations among them. Since the maximal
minors of the formal matrix $F(y)$ are linearly independent
polynomials, the corresponding linear combinations give the same
number of linearly independent equations vanishing on $\mathcal C_a$. Since each maximal minor of $F(y)$ has degree $d$ in
the coordinates $y$, these give degree-$d$ equations for
$\mathcal C_a$.
\end{proof}

\begin{example}
Consider $d=3$, $d'=1$, $t=2$, and $a=3$, where, since $a\geq d$, we have no rank constraint on the attention parameter matrix $A$. 
In this case, the matrix $F(y)$ has dimensions $6\times3$ and 
hence it has $\binom{6}{3}=20$ maximal $3\times3$ minors. 
On the other hand, by the proof of \Cref{thm:context_syzygies}, these minors factor through the matrix $\Phi(v)$, so they span a subspace of cubic forms in $v_1,v_2,v_3$ of dimension at most
$$
\dim \Sym^3(\CC^3)^*=\binom{2\cdot3-1}{3}=10.
$$
A computation shows that this span has dimension $10$. Hence, there are exactly $20-10=10$ linear relations among the maximal minors of $F(y)$, producing $10$ cubic invariants. These relations can be computed independently of elimination. Namely, one first forms the $20$ maximal minors of the
matrix $\Phi(v)$ and expresses them in the standard monomial basis
$$
v_1^3,\ v_1^2v_2,\ v_1^2v_3,\ v_1v_2^2,\ v_1v_2v_3,\ v_1v_3^2,\ v_2^3,\ v_2^2v_3,\ v_2v_3^2,\ v_3^3.
$$
Expanding each of the $20$ maximal minors of $\Phi(v)$ in the standard cubic monomial basis of $\Sym^3(\CC^3)^*$ produces a $10\times20$ coefficient matrix: its columns are indexed by the $20$ minors, its rows by the $10$ cubic monomials in $v_1,v_2,v_3$, and each entry is the coefficient of the corresponding monomial in the corresponding minor. Its kernel records the linear dependencies among the $20$ minors. One may compute this directly in \texttt{Macaulay2} by listing all row triples, forming all maximal minors, building the coefficient matrix in the cubic monomial basis, and taking \texttt{gens kernel} of that matrix. Applying the resulting kernel vectors to the corresponding $20$ maximal minors of $F(y)$ produces the cubic invariants. These $10$ cubics are distinct from the $10$ cubics derived in \Cref{prop:cross-column-pencil}, as verified by computation; see accompanying repository. 
\end{example}

\subsection{Veronese-type invariants}

The structural description (\ref{eq:cross-column-structure}) also gives
rise to higher-degree relations among minors of the cross-column
coefficients. We begin with an example that motivates the resulting
Veronese-type invariants. 

\begin{example}
\label{ex:quartic} 
Let $d=t=2$, $a=2$, and $d^\prime=1$. 
We restrict our focus to the first output token ($j=1$). We examine the cross-column monomials that mix the target column $j=1$ with the context column $n=2$. These monomials are of degree two in the context column and degree one in the target column. 

Let $y_{(k_1,2),(k_2,2),(k_3,1)}$ denote the 
scaled coefficient of the monomial $x_{k_1 2} x_{k_2 2} x_{k_3 1}$. 
Under our parametrization map $\mu$, evaluating these coordinates using \Cref{prop:sums-monomials} yields:
\begin{align*}
y_{(1,2),(1,2),(1,1)}(W) &= a_{11}v_1,
&
y_{(1,2),(1,2),(2,1)}(W) &= a_{12}v_1,
\\
y_{(1,2),(2,2),(1,1)}(W) &= \tfrac12(a_{21}v_1+a_{11}v_2),
&
y_{(1,2),(2,2),(2,1)}(W) &= \tfrac12(a_{22}v_1+a_{12}v_2),
\\
y_{(2,2),(2,2),(1,1)}(W) &= a_{21}v_2,
&
y_{(2,2),(2,2),(2,1)}(W) &= a_{22}v_2.
\end{align*}

Eliminating the parameters to find the implicit algebraic relations reveals a minimal quartic invariant in the 
variables $y$. 
This quartic has a determinantal structure of specific $2 \times 2$ determinants: 
\begin{align}
\begin{split} 
\label{eq:quartic0}
   q(y) \!&=\!  4\!\cdot\!\underbrace{\det\begin{pmatrix}
        y_{(1,2),(1,2),(2,1)} & y_{(1,2),(1,2),(1,1)}\\
        y_{(1,2),(2,2),(2,1)} & y_{(1,2),(2,2),(1,1)}
    \end{pmatrix}}_{d_1(y)} \!\cdot\! \underbrace{\det\begin{pmatrix}
        y_{(1,2),(2,2),(2,1)} & y_{(1,2),(2,2),(1,1)}\\
        y_{(2,2),(2,2),(2,1)} & y_{(2,2),(2,2),(1,1)} 
    \end{pmatrix}}_{d_2(y)} 
    \\
    &\quad - \Bigg(\underbrace{\det\begin{pmatrix}
        y_{(1,2),(1,2),(2,1)} & y_{(1,2),(1,2),(1,1)}\\
        y_{(2,2),(2,2),(2,1)} & y_{(2,2),(2,2),(1,1)} 
    \end{pmatrix}}_{d_3(y)}\Bigg)^2.   
    \end{split} 
\end{align}

To understand why this 
quartic $q(y)$ vanishes on our attention variety, we substitute the parametrization and evaluate $d_1, d_2,$ and $d_3$ explicitly: 
%
\begin{align*}
    d_1(\mu(W)) &= a_{12}v_{1} \cdot \tfrac{1}{2}(a_{21}v_{1} + a_{11}v_{2}) - a_{11}v_{1} \cdot \tfrac{1}{2}(a_{22}v_{1} + a_{12}v_{2}) = -\tfrac{1}{2}v_{1}^2\det(A),\\
    d_2(\mu(W)) &= \tfrac{1}{2}(a_{22}v_{1} + a_{12}v_{2})(a_{21}v_{2}) - \tfrac{1}{2}(a_{21}v_{1} + a_{11}v_{2})(a_{22}v_{2}) = -\tfrac{1}{2}v_{2}^2\det(A),\\
    d_3(\mu(W)) &= a_{12}v_{1}(a_{21}v_{2}) - a_{11}v_{1}(a_{22}v_{2}) = -v_{1}v_{2}\det(A).
\end{align*}

The right hand sides show that 
the map defining these determinants factors 
through the Veronese embedding of $\mathbb{P}^1 \hookrightarrow \mathbb{P}^2$, 
recognized in statistics as the \textit{Hardy-Weinberg curve}: 
$$
\psi: W \mapsto (d_1, d_2, d_3) = -\tfrac{1}{2}\det(A) \cdot (\underbrace{\quad v_{1}^2\quad }_{z_1}, \underbrace{\quad v_{2}^2\quad }_{z_2}, \underbrace{\quad 2v_{1}v_{2}\quad }_{z_3}).
$$
The defining equation of this curve is $4z_1z_2 - z_3^2 = 0$. Translating this back to our determinants exactly yields the condition $4d_1d_2 - d_3^2 = 0$, showing that the 
quartic $q(y)$ vanishes identically on the attention variety. A 2-dimensional slice of this quartic is depicted in \Cref{fig:quartic}.  
\end{example}

The determinantal structure observed in \Cref{ex:quartic} 
extends to arbitrary sequence length $t\geq2$ and gives rise to a general family of Veronese-type invariants.

\paragraph{Veronese embeddings and their invariants.}  
We briefly recall the Veronese embedding and its defining equations.
For $u = (u_1, \dots, u_n) \in \CC^n$, the $d$th Veronese map 
sends $u$ to the collection of all monomials of degree $d$ in $u_1, \dots, u_n$. 
Projectively, this gives the embedding 
$\nu_d\colon \mathbb{P}^{n-1} \to \mathbb{P}^{N-1}$, where $N = \binom{n+d-1}{d}$. 
For example, for $n=2$ and $d=2$, 
$(u_1, u_2) \mapsto (u_1^2, u_1 u_2, u_2^2)$. 
Let $z_\alpha$ denote the coordinate corresponding to the monomial $u^\alpha = u_1^{\alpha_1} \cdots u_n^{\alpha_n}$, where $\sum \alpha_i = d$. 
These coordinates satisfy quadratic binomial relations:
$$
z_\alpha z_\beta - z_\gamma z_\delta = 0, 
$$
whenever $\alpha + \beta = \gamma + \delta$. 
For instance, when $n=d=2$, this gives $(u_1^2)(u_2^2) = (u_1 u_2)^2$. 
Such quadratic binomials generate the defining ideal of the Veronese variety; see, for example, \cite[Chapter~4]{landsberg2012tensors}. 

In computational settings, these quadratic relations are frequently expressed through determinantal flattenings. 
For $1 \le k < d$, we can arrange the coordinates $z_\gamma$ into a symmetric matrix $M^{(k)}$, known as the \textit{catalecticant matrix}~\cite{pucci1998veronese}, whose rows and columns are indexed by monomials of degrees $k$ and $d-k$, respectively, with entries 
$M^{(k)}_{\alpha, \beta} = z_{\alpha+\beta}$. 
The defining ideal of the Veronese variety is generated by the $2\times 2$ minors of $M^{(k)}$ for any fixed choice of $k$ \cite[Section 4]{LORS2019}.
\exampleqed

\medskip 

By \Cref{prop:cross-column}, on $\mathcal C_a$ the slice quadrics factor as 
$q_s=(v^\top z)(a_s^\top z)$. 
Suitable $r\times r$ minors of their coefficient matrices
isolate degree-$r$ monomials in the coordinates of the common factor $v^\top z$, multiplied by a 
common minor of $A$. 
As the monomial varies, these minors therefore behave like Veronese coordinates and satisfy the quadratic relations of the $r$th Veronese embedding, 
yielding polynomial equations for $\mathcal C_a$. 

\begin{theorem}[Veronese-type invariants]
\label{thm:veronese}
Let $t > 1$, and fix a target column $j \in [t]$ and a context column $n \in [t]\setminus\{j\}$. 
Fix $2 \le r \le \min(a, d)$ and ordered tuples $K = (k_1, \dots, k_r)$ 
and $L = (l_1, \dots, l_r)$ 
of $r$ distinct 
indices in $[d]$. 
For every multi-index $\alpha = (\alpha_1, \dots, \alpha_r) \in \mathbb{N}^r$ 
with $\sum \alpha_m = r$, 
there exists an $r \times r$ matrix $M_\alpha(y)$ built from the formal image coordinates $y$ such that, 
writing $D_\alpha(y) = \det(M_\alpha(y))$, 
we have 
$$
D_\alpha(\mu(W)) = \left( \prod_{m=1}^r v_{ k_m}^{\alpha_m} \right) \det(A_{K, L}),
$$
where $A_{K,L}$ is the 
submatrix of $A$ with rows indexed by $K$ and columns indexed by $L$. 

Thus, 
as $\alpha$ varies, the $D_\alpha(y)$ evaluate to the 
degree-$r$ Veronese monomials, scaled by the common factor $\det(A_{K, L})$. 
Arranging these 
into a catalecticant matrix yields a set of $2 \times 2$ minors 
$D_\alpha D_\beta - D_\gamma D_\delta$ 
that vanish on $\mathcal C_a$. 
These minors provide a family of polynomial equations of degree $2r$ in the coefficient ring $\CC[y]$ for $\mathcal C_a$. 
\end{theorem}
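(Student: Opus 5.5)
The plan is to build $M_\alpha(y)$ explicitly from rows of the cross-column flattening $F(y)$ of \eqref{eq:cross-column-flattening}. Its columns will be indexed by $L$ and its rows by suitably chosen multisets $\{k_m,k_{\sigma(m)}\}$. We then compute its determinant on the parametrization using multilinearity. By \Cref{prop:sums-monomials}, the scaled coordinates evaluate to
$$
y_{(p,n),(q,n),(l,j)}(\mu(W))=\tfrac12\bigl(a_{pl}v_q+a_{ql}v_p\bigr)\quad (p\neq q),\qquad y_{(p,n),(p,n),(l,j)}(\mu(W))=a_{pl}v_p .
$$
Writing $a_{p,L}=(a_{pl_1},\ldots,a_{pl_r})$, a diagonal row $\{k_m,k_m\}$ restricted to the columns $L$ is $v_{k_m}a_{k_m,L}$. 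A mixed row $\{k_m,k_{m'}\}$ with $m\neq m'$, scaled by $2$, is $v_{k_{m'}}a_{k_m,L}+v_{k_m}a_{k_{m'},L}$.

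\emph{Combinatorial step.} Given $\alpha$ with $\sum_m\alpha_m=r$, choose an idempotent map $\sigma:[r]\to[r]$ (so $\sigma\circ\sigma=\sigma$) whose fibres satisfy $|\sigma^{-1}(i)|=\alpha_i$. Such a map exists. Let $P=\{i:\alpha_i\geq1\}$ and set $\sigma(i)=i$ for $i\in P$. The number of indices with $\alpha_i=0$ is $r-|P|=\sum_{i\in P}(\alpha_i-1)$. We can therefore send exactly $\alpha_i-1$ of the zero indices to each $i\in P$.

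Define row $m$ of $M_\alpha(y)$ as follows. If $\sigma(m)=m$, it is the diagonal row $\{k_m,k_m\}$. Otherwise it is $2$ times the row $\{k_m,k_{\sigma(m)}\}$. In both cases the row is restricted to the columns $l_1,\ldots,l_r$.

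\emph{Determinant evaluation.} On $\mu(W)$, row $m$ with $\sigma(m)\neq m$ equals $v_{k_{\sigma(m)}}a_{k_m,L}+v_{k_m}a_{k_{\sigma(m)},L}$. Expand the determinant multilinearly in these mixed rows. Any term that picks the second summand contains the vector $a_{k_{\sigma(m)},L}$ in row $m$. Since $\sigma(m)$ is a fixed point, row $\sigma(m)$ is the diagonal row, which is a multiple of the same vector $a_{k_{\sigma(m)},L}$. So every such term vanishes. Only the term choosing $v_{k_{\sigma(m)}}a_{k_m,L}$ in every row survives, giving
$$
D_\alpha(\mu(W))=\prod_{m=1}^r v_{k_{\sigma(m)}}\,\det(A_{K,L})=\prod_{i=1}^r v_{k_i}^{\alpha_i}\,\det(A_{K,L}).
$$
This is the claimed identity. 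The hypothesis $r\le\min(a,d)$ is only needed so that $\det(A_{K,L})$ is not identically zero. Without it the resulting equations would be trivial, but they would not be false.

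\emph{Veronese relations.} As $\alpha$ ranges over degree-$r$ exponents, the vector $(D_\alpha(\mu(W)))_\alpha$ equals $\det(A_{K,L})\cdot\nu_r(v_{k_1},\ldots,v_{k_r})$. Hence $D_\alpha D_\beta-D_\gamma D_\delta$ vanishes on $\operatorname{im}(\mu)$ whenever $\alpha+\beta=\gamma+\delta$: both products equal $\det(A_{K,L})^2\,v_K^{\alpha+\beta}$. Equivalently, the $2\times2$ minors of the catalecticant matrix with entries $D_{\alpha'+\beta'}$ vanish. By \Cref{prop:cross-column} and the sequence-copy identification, $\mathcal C_a$ is the closure of this image, so these relations vanish on $\mathcal C_a$. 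Each $D_\alpha$ is a degree-$r$ polynomial in $y$, so the relations have degree $2r$.

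The main obstacle is choosing the rows so that the unwanted multilinear terms cancel. Idempotence of $\sigma$ is exactly what makes each cross term duplicate an existing diagonal row. The remaining steps are routine. As a sanity check, \Cref{ex:quartic} fits this pattern up to column ordering and scaling: $d_1$ uses rows $\{1,1\},\{1,2\}$, and $d_3$ uses the two diagonal rows.
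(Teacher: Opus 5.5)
Your proposal is correct and follows the paper's proof essentially verbatim: your idempotent map $\sigma$ is exactly the paper's $f$ (self-loops on $\{m:\alpha_m\ge 1\}$, zero indices sent to those fixed points), with the same rows $\{k_m,k_{\sigma(m)}\}$ rescaled by $\omega$, restricted to the columns $L$, and expanded multilinearly in the same way. The only difference is how the non-primary terms are killed: you note that a secondary choice in row $m$ is proportional to the diagonal row $\sigma(m)$, whereas the paper argues that a nonzero term would force $f(I)=I$ and hence a cycle of length at least $2$; the paper's argument covers any $f$ without long cycles, but yours is shorter and fully sufficient for the map you construct.
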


\begin{proof}
We construct the matrix $M_\alpha(y)$ by choosing a map $f:[r]\to[r]$ such that $|f^{-1}(m)|=\alpha_m$ for every~$m$, and such that the directed graph of $f$ has no cycles of length greater than $1$. 
%
To construct such an $f$, assign a self-loop $f(m)=m$ for each $m$ with $\alpha_m\ge 1$. This uses one incoming edge at each such vertex and leaves
$$
\sum_{\alpha_m\ge 1}(\alpha_m-1)
=
r-\#\{m:\alpha_m\ge 1\}
$$
incoming edges still to be assigned. 
The remaining vertices are precisely those with $\alpha_p=0$, and there are exactly
$$
r-\#\{m:\alpha_m\ge 1\}
$$
of them. Assign each such vertex $p$ to one of the remaining required targets so as to realize the multiplicities $\alpha_m$. Since vertices with $\alpha_p=0$ receive no incoming edges, they cannot lie on a directed cycle. Hence every cycle of $f$ is a self-loop.

We assign the $p$th row of $M_\alpha(y)$ the context pair $\{k_{f(p)}, k_p\}$, and the $q$th column the target index $l_q$. 

The matrix entry is 
defined as: 
$$
(M_\alpha)_{p,q} = \omega(k_{f(p)}, k_p) \cdot y_{(k_{f(p)}, n), (k_p, n), (l_q, j)}.
$$
The scaling factor $\omega$ is defined in (\ref{eq:cross-column-quadrics}); the role of 
$\omega(k_{f(p)},k_p)$ is to clear the symmetry normalization in the cross-column coordinates, so that the determinants recover the standard Veronese monomials without additional binomial coefficients. 

By \Cref{prop:sums-monomials}, evaluated on the parametrization $\mu$, this unscaled entry expands to $a_{k_p l_q} v_{ k_{f(p)}} + a_{k_{f(p)} l_q} v_{ k_p}$ when $f(p) \neq p$, and consists of the single term $a_{k_p l_q} v_{ k_p}$ when $f(p) = p$. Let $s$ be the number of rows where $f(p) \neq p$. By the multilinearity of the determinant with respect to these $s$ rows, $\det(M_\alpha(\mu(W)))$ expands into a sum of $2^s$ terms. Each term is indexed by a subset
$I\subseteq\{p:f(p)\neq p\}$, specifying the rows in which the second
summand is chosen; in all remaining rows, the first summand is chosen.

The ``primary'' term in this expansion selects $a_{k_p l_q} v_{ k_{f(p)}}$ for every row $p$, including the self-loop rows, where this is the single available term. This allows the matrix to factor exactly as $\diag(v_{ k_{f(1)}}, \dots, v_{ k_{f(r)}}) A_{K, L}$. Since $|f^{-1}(m)|=\alpha_m$ for every $m$, we have
$
\prod_{p=1}^r v_{k_{f(p)}}
=
\prod_{m=1}^r v_{k_m}^{\alpha_m}.
$ 
Its determinant is $(\prod_{m=1}^r v_{ k_m}^{\alpha_m}) \det(A_{K, L})$, which provides the desired Veronese coordinate.

Any other term in the determinant expansion must select the ``secondary'' part, $a_{k_{f(p)} l_q} v_{ k_p}$, for some non-empty subset of rows $I \subseteq \{p \mid f(p) \neq p\}$. The rows of the corresponding $A$ submatrix are then indexed by $k_{f(p)}$ for $p \in I$ and by $k_p$ for $p \notin I$. For this determinant to be non-zero, these row indices must again be exactly the set $K$, which is equivalent to requiring $f(I)=I$. Since $I$ is finite, this means that the restriction $f|_I$ is a permutation of $I$, and hence contains a cycle. Because every $p \in I$ satisfies $f(p)\neq p$, such a cycle must have length at least $2$, contradicting the construction of $f$. Therefore the row indices cannot all be distinct, so the determinant vanishes.

Therefore, $D_\alpha(\mu(W))$ evaluates exactly to the single primary term. Since these parameterized determinants map perfectly to the Veronese coordinates, the $2 \times 2$ minors of their corresponding formal catalecticant matrix vanish on the image of $\mu$, and hence on $\mathcal C_a$.
\end{proof}

\begin{example}[Example \ref{ex:quartic} revisited]
We can now re-examine the quartic invariant from Example \ref{ex:quartic} through the lens of \Cref{thm:veronese}. 
In that example, we looked for invariants of degree $2r$, where $r=2$. 
We selected the context indices $K=(1,2)$ and target indices $L=(2,1)$. The degree $r=2$ multi-indices are $\alpha \in \{(2,0), (1,1), (0,2)\}$. Following the construction in \Cref{thm:veronese}, we build the three $2 \times 2$ matrices $M_\alpha(y)$. Recall that the entries are unscaled by the factor $\omega(u,v)$, defined in (\ref{eq:cross-column-quadrics}), to clear the fractional denominators from the formal coordinates. 
For $\alpha = (2,0)$, the mapping $f$ defined in the proof of \Cref{thm:veronese}, forces $f(1)=1$ and $f(2)=1$. 
The resulting matrix scales the second row by $\omega(1,2)=2$: 
$$ 
M_{(2,0)}(y) = \begin{pmatrix} 
    y_{(1,2),(1,2),(2,1)} & y_{(1,2),(1,2),(1,1)}\\
    2y_{(1,2),(2,2),(2,1)} & 2y_{(1,2),(2,2),(1,1)}
\end{pmatrix}. 
$$
Thus, $D_{(2,0)}(y) = \det(M_{(2,0)}(y)) = 2 d_1(y)$. For $\alpha = (0,2)$, the mapping $f$ forces $f(1)=2$ and $f(2)=2$. Here, the first row is scaled by $\omega(2,1)=2$: 
$$ 
M_{(0,2)}(y) = \begin{pmatrix} 
    2y_{(1,2),(2,2),(2,1)} & 2y_{(1,2),(2,2),(1,1)}\\
    y_{(2,2),(2,2),(2,1)} & y_{(2,2),(2,2),(1,1)}
\end{pmatrix}. 
$$ 
This gives $D_{(0,2)}(y) = \det(M_{(0,2)}(y)) = 2 d_2(y)$. Finally, for $\alpha = (1,1)$, the mapping $f$ is the identity ($f(1)=1, f(2)=2$). Both rows use matching context indices, so $\omega=1$:
$$ 
M_{(1,1)}(y) = \begin{pmatrix} 
    y_{(1,2),(1,2),(2,1)} & y_{(1,2),(1,2),(1,1)}\\
    y_{(2,2),(2,2),(2,1)} & y_{(2,2),(2,2),(1,1)}
\end{pmatrix}, 
$$
yielding $D_{(1,1)}(y) = \det(M_{(1,1)}(y)) = d_3(y)$.

By \Cref{thm:veronese}, these determinants $D_\alpha(y)$ represent the
degree-$2$ Veronese coordinates. We therefore form the $(1,1)$
catalecticant matrix, whose rows and columns are indexed by the
degree-$1$ multi-indices
$\beta,\gamma\in\{(1,0),(0,1)\}$.
The entry in row $\beta$ and column $\gamma$ is
$D_{\beta+\gamma}(y)$:

$$ 
\begin{pmatrix}
    D_{(2,0)}(y) & D_{(1,1)}(y) \\
    D_{(1,1)}(y) & D_{(0,2)}(y)
\end{pmatrix} = \begin{pmatrix}
    2 d_1(y) & d_3(y) \\
    d_3(y) & 2 d_2(y)
\end{pmatrix}. 
$$
The defining ideal of this Veronese embedding is generated by the determinant of this catalecticant matrix. Computing this determinant exactly recovers our formal quartic:
$$ 
\det \begin{pmatrix} 2 d_1(y) & d_3(y) \\ d_3(y) & 2 d_2(y) \end{pmatrix} = 4d_1(y)d_2(y) - (d_3(y))^2 = q(y). 
$$
The theorem thus explains both the determinantal structure of the minors $d_k$ and the presence of the factor of $4$ in (\ref{eq:quartic0}) as natural consequences of unscaling the binomial coefficients in the Veronese map. 
\end{example}

\begin{corollary}[Cross-target invariants]
\label{cor:cross_relations}
Let the assumptions of \Cref{thm:veronese} hold. 
Fix a choice of $r$ context indices $K$, and let $L$ and $L^\prime$ be two distinct choices of $r$ target indices. 
For any multi-index $\alpha$ of degree~$r$, let $D_{\alpha, L}(y)$ and $D_{\alpha, L^\prime}(y)$ denote the corresponding degree-$r$ Veronese coordinates. 
Define the $\binom{2r-1}{r} \times 2$~matrix 
$$ 
M_{L, L^\prime}(y) = \begin{pmatrix} \vdots & \vdots \\ D_{\alpha, L}(y) & D_{\alpha, L^\prime}(y) \\ \vdots & \vdots \end{pmatrix}, 
$$
where the rows are indexed by all multi-indices $\alpha$ of degree $r$. All $2 \times 2$ minors of $M_{L, L^\prime}(y)$ are 
polynomials of degree $2r$ that vanish on $\mathcal C_a$. 
\end{corollary}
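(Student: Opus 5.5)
The plan is to apply \Cref{thm:veronese} twice, once with target tuple $L$ and once with $L'$, keeping the same context tuple $K$. For each multi-index $\alpha$ of degree $r$, the matrix $M_\alpha(y)$ depends on $K$ only through the rows, that is, through the choice of the map $f$. It depends on $L$ only through the column labels $l_q$. So we fix one map $f=f_\alpha$ for each $\alpha$ and use it for both $L$ and $L'$. Then \Cref{thm:veronese} gives, for every parameter $W$,
\[
D_{\alpha,L}(\mu(W))=v^\alpha\det(A_{K,L}),\qquad D_{\alpha,L'}(\mu(W))=v^\alpha\det(A_{K,L'}),
\]
where $v^\alpha=\prod_m v_{k_m}^{\alpha_m}$.

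Next we evaluate $M_{L,L'}$ on the parametrization. It factors as the column vector $(v^\alpha)_\alpha$ times the row vector $(\det(A_{K,L}),\det(A_{K,L'}))$. This is an outer product, so it has rank at most one. Concretely, the $2\times2$ minor on rows $\alpha,\beta$ equals
\[
v^\alpha v^\beta\bigl(\det A_{K,L}\det A_{K,L'}-\det A_{K,L'}\det A_{K,L}\bigr)=0.
\]
The number of rows is the number of degree-$r$ monomials in $r$ variables, which is $\binom{2r-1}{r}$, as stated.

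Finally we pass from the image to its closure. Each minor $D_{\alpha,L}D_{\beta,L'}-D_{\beta,L}D_{\alpha,L'}$ is a polynomial in $y$ that vanishes on $\operatorname{im}(\mu)$ restricted to this cross-column block. By \Cref{def:cross-column-variety} it therefore vanishes on the Zariski closure $\mathcal C_a$. Each $D$ is an $r\times r$ determinant whose entries are linear in $y$, so each minor has degree $2r$.

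The only delicate point is making sure the same $f$ can serve for both $L$ and $L'$. The proof of \Cref{thm:veronese} uses only the cycle structure of $f$ and the row multiset, never the column indices, so this holds. The argument is otherwise a direct rank-one observation.
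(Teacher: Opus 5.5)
Your proposal is correct and follows the paper's argument. You evaluate both columns via \Cref{thm:veronese} as $v^\alpha\det(A_{K,L})$ and $v^\alpha\det(A_{K,L'})$, note that the evaluated matrix is an outer product of rank at most one, and pass to the Zariski closure $\mathcal C_a$. Your concern about using the same map $f$ for $L$ and $L'$ is harmless but unnecessary, because the evaluation formula of \Cref{thm:veronese} holds for any admissible choice of $f$.
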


\begin{proof}
By \Cref{thm:veronese}, evaluating the formal coordinates on the parametrization map $\mu$ yields:
$$ 
D_{\alpha, L}(\mu(W)) = v^\alpha \det(A_{K,L}) \quad \text{and} \quad D_{\alpha, L^\prime}(\mu(W)) = v^\alpha \det(A_{K,L^\prime}), 
$$
where $v^\alpha = \prod_{m=1}^r v_{ k_m}^{\alpha_m}$. The two columns of the evaluated matrix $M_{L, L^\prime}(\mu(W))$ share the exact same Veronese parameters $v^\alpha$ and differ only by the scalar 
minors of $A$. 
Because the columns are strictly proportional, the matrix has rank at most $1$ on the image of $\mu$. Thus, its $2 \times 2$ minors vanish on the image of $\mu$, and hence on $\mathcal C_a$. Since each entry $D_\alpha(y)$ is a homogeneous polynomial of degree $r$ in the formal variables $y$, taking the $2 \times 2$ minors generates invariant polynomials of degree $2r$. 
\end{proof}

This is not a special case of the catalecticant construction in
Theorem \ref{thm:veronese}; rather, it follows from the same parametrization formula
by comparing two different choices of target indices $L$ and $L'$.

\begin{corollary}[Block Veronese invariants]
\label{cor:block_veronese}
Let the assumptions of \Cref{thm:veronese} hold. 
Fix a choice of $r$ context indices $K$. 
Let $C_K(y)$ be the matrix with rows  indexed by
multi-indices $\beta$ of degree $1$ in $r$ variables, 
and columns indexed by pairs $(\gamma,L)$, where $\gamma$ is a multi-index of degree $r-1$
and $L$ is a choice of $r$ target indices, whose entry in row $\beta$ and column $(\gamma,L)$ is 
$$
C_K(y)_{\beta,(\gamma,L)} = D_{\beta+\gamma,L}(y). 
$$
Then all $2\times 2$ minors of $C_K(y)$ are 
polynomials of degree
$2r$ that vanish on $\mathcal C_a$. 
\end{corollary}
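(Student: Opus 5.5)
The plan is to evaluate $C_K(y)$ on the image of $\mu$ using \Cref{thm:veronese}, show that it has rank at most one there, and then pass to the Zariski closure. For each $L$ and each degree-$r$ multi-index $\alpha$, \Cref{thm:veronese} gives
\[
D_{\alpha,L}(\mu(W))=v^\alpha\det(A_{K,L}),
\qquad
v^\alpha=\prod_{m=1}^r v_{k_m}^{\alpha_m}.
\]
The row index is $\beta=e_i$, a degree-one multi-index. The column index is $(\gamma,L)$. Substituting gives
\[
C_K(\mu(W))_{\beta,(\gamma,L)}=v_{k_i}\,v^\gamma\det(A_{K,L}).
\]
This entry is a product of a factor depending only on the row, namely $v_{k_i}$, and a factor depending only on the column, namely $v^\gamma\det(A_{K,L})$. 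So $C_K(\mu(W))=x\,w^\top$ with $x=(v_{k_1},\dots,v_{k_r})^\top$ and $w_{(\gamma,L)}=v^\gamma\det(A_{K,L})$.

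Consequently $C_K(\mu(W))$ has rank at most one, and every $2\times2$ minor of $C_K(y)$ vanishes on $\operatorname{im}(\mu)$. Each such minor is a polynomial in $y$, so it also vanishes on the Zariski closure, in particular on $\mathcal C_a$.

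For the degree count, each entry $D_{\beta+\gamma,L}(y)$ is the determinant of an $r\times r$ matrix whose entries are scalar multiples of the coordinates $y$. Hence each entry is homogeneous of degree $r$, and each $2\times2$ minor is homogeneous of degree $2r$.

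The main point to check is that \Cref{thm:veronese} gives the exact formula for every pair $(\alpha,L)$ with the same fixed $K$. The matrices $M_\alpha$ are built from $K$ and $\alpha$, with $L$ entering only through the column labels, so the argument in that proof applies uniformly. The hypothesis $r\le\min(a,d)$ only ensures the minors $\det(A_{K,L})$ are not forced to vanish; the identity itself holds regardless. The argument is the common generalization of the catalecticant minors (a single $L$) and \Cref{cor:cross_relations} (a single $\gamma$ direction).
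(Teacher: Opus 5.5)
Your proposal is correct and matches the paper's proof: you evaluate via \Cref{thm:veronese} to get $D_{\beta+\gamma,L}(\mu(W))=v^\beta\bigl(v^\gamma\det(A_{K,L})\bigr)$, read off that $C_K(\mu(W))$ is a rank-one outer product, pass from $\operatorname{im}(\mu)$ to $\mathcal C_a$ by Zariski closure, and count degrees. One small imprecision in your side remark on the hypotheses: $r\le d$ is needed for $K$ and $L$ to exist at all, while only $r\le a$ bears on whether $\det(A_{K,L})$ vanishes; this does not affect the argument.
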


\begin{proof}
By \Cref{thm:veronese},
$$
D_{\beta+\gamma,L}(\mu(W))
= v^{\beta+\gamma}\det(A_{K,L})
= v^\beta \bigl(v^\gamma \det(A_{K,L})\bigr).
$$
Hence the evaluated matrix $C_K(\mu(W))$ factors as an outer product
$$
C_K(\mu(W))
=
\bigl(v^\beta\bigr)_\beta
\bigl(v^\gamma \det(A_{K,L})\bigr)_{(\gamma,L)}^{\!\top},
$$
so it has rank at most $1$. Therefore all of its $2\times 2$ minors vanish on
the image of $\mu$, and hence on $\mathcal C_a$. Since each entry $D_{\beta+\gamma,L}(y)$ is homogeneous of
degree $r$ in the formal variables $y$, these minors are homogeneous invariant
polynomials of degree $2r$. 
\end{proof}

For each fixed $L$, the corresponding block of $C_K(y)$ is the
$1+(r-1)$ catalecticant from \Cref{thm:veronese}; allowing $L$ to vary
also produces mixed-target minors.

\begin{remark}
The same construction applies to every decomposition $r = s + (r - s)$ with $1 \leq s \leq r - 1$, by indexing the rows and columns by multi-indices of degrees $s$ and $r-s$, respectively. We give the
construction explicitly only for $s=1$, since this is the case used in
the computations and examples below; the other cases are obtained
analogously.
\end{remark}

\begin{remark}
\Cref{cor:cross_relations,cor:block_veronese} capture
different rank-one structures. 
\Cref{cor:cross_relations} compares two target choices $L$ and $L'$ using the full Veronese coordinate vectors $(D_{\alpha,L})_{\alpha}$ and $(D_{\alpha,L'})_{\alpha}$, so its $2\times 2$ 
minors involve two fixed multi-indices $\alpha,\alpha'$ and two target sets. 
By contrast, \Cref{cor:block_veronese} first arranges the coordinates 
according to decompositions $\alpha=\beta+\gamma$ with $|\beta|=1$ and
$|\gamma|=r-1$, and then takes $2\times 2$ minors in the larger block matrix. 
This recovers the catalecticant minors for each fixed $L$, and also produces
mixed minors involving different pairs $(\gamma,L)$ and $(\gamma',L')$, such as
$$
\det\begin{pmatrix}
D_{\beta+\gamma,L}(y) & D_{\beta+\gamma',L'}(y)\\
D_{\beta'+\gamma,L}(y) & D_{\beta'+\gamma',L'}(y)
\end{pmatrix}. 
$$
These mixed minors do not in general appear in \Cref{cor:cross_relations}. 
Conversely, \Cref{cor:cross_relations}
allows arbitrary pairs $\alpha,\alpha'$ across two target sets, and such pairs
need not share a common decomposition with a fixed $\gamma$, so those minors do
not in general arise from \Cref{cor:block_veronese}. 
Thus the two families overlap, but neither contains the other in general. 
\end{remark}

\begin{example}\label{ex:d3_invariants}
Let $d=3$, $d'=1$, $t=2$, and $a=3$, and fix a target column $j\in[t]$. 
In this case, the defining ideal $J$ is minimally generated by $10$ linear relations from \Cref{sec:linear}, $10$ cubics from \Cref{prop:cross-column-pencil}, $10$ cubics from \Cref{thm:context_syzygies}, and $90$ quartics. 
The quartics are explained as follows. 
The Veronese-type constructions from \Cref{thm:veronese} and Corollaries~\ref{cor:cross_relations} and~\ref{cor:block_veronese} together produce $45$ quartics, but only $18$ of these remain in a minimal generating set after removing redundancies.  
Overall, $9$ minimal quartics come from \Cref{thm:veronese}, $9$ more come from \Cref{cor:block_veronese}, and the remaining $72$ quartics come from the resultants discussed in \Cref{sec:resultants}. 

The Veronese construction applies for both $r=2$ and $r=3$. 
For $r=2$, it produces quartic invariants. For $r=3$, we necessarily take $K = L=(1,2,3)$, 
so the determinants $D_\alpha(y)$, indexed by the ten multi-indices $\alpha\in\mathbb N^3$ with $|\alpha|=3$, behave like the coordinates of the degree-$3$ Veronese embedding of $\mathbb P^2$. These sextic relations are not needed for a minimal generating set of $J$, but they illustrate the higher-degree constraints provided by \Cref{thm:veronese}. For example, for $\alpha=(1,2,0)$ we may choose $f:[3]\to[3]$ given by
$$
f(1)=1,\qquad f(2)=2,\qquad f(3)=2.
$$
Then for each context column $n\neq j$ the matrix $M_{(1,2,0)}(y)$ is
$$
M_{(1,2,0)}(y)=
\begin{pmatrix}
y_{(1,n),(1,n),(1,j)} & y_{(1,n),(1,n),(2,j)} & y_{(1,n),(1,n),(3,j)}\\
y_{(2,n),(2,n),(1,j)} & y_{(2,n),(2,n),(2,j)} & y_{(2,n),(2,n),(3,j)}\\
2y_{(2,n),(3,n),(1,j)} & 2y_{(2,n),(3,n),(2,j)} & 2y_{(2,n),(3,n),(3,j)}
\end{pmatrix},
$$
and
$$
D_{(1,2,0)}(\mu(W))=v_{1}v_{2}^2\det(A_{K,L}).
$$

Arranging the ten determinants $D_\alpha(y)$ with $|\alpha|=3$ into the catalecticant matrix gives
$$
C_3(y)=
\begin{pmatrix}
D_{(3,0,0)} & D_{(2,1,0)} & D_{(2,0,1)} & D_{(1,2,0)} & D_{(1,1,1)} & D_{(1,0,2)}\\
D_{(2,1,0)} & D_{(1,2,0)} & D_{(1,1,1)} & D_{(0,3,0)} & D_{(0,2,1)} & D_{(0,1,2)}\\
D_{(2,0,1)} & D_{(1,1,1)} & D_{(1,0,2)} & D_{(0,2,1)} & D_{(0,1,2)} & D_{(0,0,3)}
\end{pmatrix}.
$$
Its $2\times 2$ minors vanish on the attention variety. For instance,
$$
D_{(3,0,0)}(y)D_{(1,2,0)}(y)-D_{(2,1,0)}(y)^2=0.
$$
Since each $D_\alpha(y)$ is cubic in the coefficient variables, these are sextic invariants.
\end{example}

\subsection{Quartics from resultants}\label{sec:resultants}

The previous subsection produced quartic and higher-degree invariants from rank-one catalecticant matrices. We now describe a different source of quartics, based on Sylvester resultants. By \Cref{prop:cross-column}, the slice quadrics $q_s(y;z)$ of \Cref{def:slice-quadrics} share a common linear factor on $\mathcal C_a$. Restricting them to a line in $\PP^{d-1}$ turns them into binary quadrics, so their Sylvester resultants vanish. Further quartics arise by taking resultants of linear combinations of the restricted slice quadrics.

We impose this line restriction in order to reduce to binary quadrics, for which the Sylvester resultant has a concrete determinantal expression. Without this restriction, one is led instead to multivariate elimination conditions for quadrics in $d$ variables, which are substantially less tractable computationally.

\paragraph{Resultants.} The \textit{resultant} is a classical elimination polynomial that detects when a system of homogeneous polynomials has a common nontrivial zero. 
In this subsection, we use the classical resultant of two binary forms, that is, two homogeneous polynomials in two variables.
For two binary forms $f$ and $g$, the resultant $\Res(f,g)$ is a polynomial in the coefficients of $f$ and $g$ that vanishes if and only if $f$ and $g$ have a common factor, equivalently, a common zero in $\PP^1$. 
In the special case of binary quadrics, $\Res(f,g)$ is given explicitly by the determinant of the Sylvester matrix. 

\begin{definition}[Sylvester resultant of two binary quadrics]\label{def:sylvester}
Let
$$
f(\lambda_1,\lambda_2)=A_1\lambda_1^2+B_1\lambda_1\lambda_2+C_1\lambda_2^2
\quad\text{and}\quad
g(\lambda_1,\lambda_2)=A_2\lambda_1^2+B_2\lambda_1\lambda_2+C_2\lambda_2^2
$$
be binary quadrics. Their \emph{Sylvester resultant} is
$$
\operatorname{Res}(f,g):=
\det
\begin{pmatrix}
A_1 & B_1 & C_1 & 0\\
0 & A_1 & B_1 & C_1\\
A_2 & B_2 & C_2 & 0\\
0 & A_2 & B_2 & C_2
\end{pmatrix}.
$$
The resultant vanishes, $\operatorname{Res}(f,g)=0$, if and only if $f$ and $g$ have a common factor, or equivalently, a common zero in $\mathbb{P}^1$; see, for example, \cite[Section~4.2]{sturmfels2018invitation}.
\end{definition}

The Sylvester matrix in Definition \ref{def:sylvester} packages the condition that two homogeneous binary quadrics share a common root. Indeed, if $f$ and $g$ have a common linear factor $\ell$, then we may write $f=\ell p$ and $g=\ell q$ for linear forms $p$ and $q$. Taking $u=q$ and $v=-p$ gives the nontrivial identity $uf+vg=0$. Writing out the coefficients of this cubic identity in the basis $\lambda_1^3,\lambda_1^2\lambda_2,\lambda_1\lambda_2^2,\lambda_2^3$ gives a homogeneous linear system whose coefficient matrix is exactly the Sylvester matrix above. Hence a nontrivial relation $uf+vg=0$ exists if and only if the Sylvester matrix has nontrivial kernel, equivalently, vanishing determinant. This determinant therefore gives a concrete eliminant for the variables $\lambda_1,\lambda_2$, expressing the existence of a common root as a single polynomial condition on the coefficients of $f$ and $g$. 
\exampleqed

\medskip 

We now apply this criterion to the slice quadrics of \Cref{def:slice-quadrics}. By \Cref{prop:cross-column}, on $\mathcal C_a$ they have the form $q_s=\ell m_s$, where the linear factor $\ell$ is independent of $s$. Restricting a slice quadric to a line in the context-variable space therefore produces a binary quadric, and all such restrictions retain the common linear factor.

Let $L=\PP(\operatorname{Span}\{\xi,\zeta\})\subseteq\PP^{d-1}$, where $\xi,\zeta\in\CC^d$ are linearly independent. Restricting $q_s(y;z)$ to $L$ means substituting $z=\lambda_1\xi+\lambda_2\zeta$, which gives the binary quadric
$$
q_s^L(y;\lambda_1,\lambda_2):=q_s(y;\lambda_1\xi+\lambda_2\zeta).
$$
For some scalars $u_1,\dots,u_d$ and $w_1,\dots,w_d$, we write
$$
Q_u^L:=\sum_{s=1}^d u_s\,q_s^L,
\qquad
Q_w^L:=\sum_{s=1}^d w_s\,q_s^L.
$$

\begin{theorem}[Quartics from Sylvester resultants]\label{thm:slice_resultants}
With notation as above, for every pair of slices $r,s\in[d]$, the Sylvester resultant
$\Res(q_r^L,q_s^L)$
is a quartic polynomial in the coefficient variables $y$ that vanishes on $\mathcal C_a$. More generally,
$\Res(Q_u^L,Q_w^L)$
also vanishes on $\mathcal C_a$. Equivalently, every coefficient of $\Res(Q_u^L,Q_w^L)$ as a polynomial in the auxiliary variables $u_s,w_s$ is a quartic equation for $\mathcal C_a$.
\end{theorem}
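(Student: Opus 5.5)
The plan is to use the structural description of \Cref{prop:cross-column} and check vanishing on the dense image of $\mu$. The resultant is a polynomial in $y$, so this suffices by Zariski closure. Since $\mathcal C_a$ is itself closed and equals the right-hand side of (\ref{eq:cross-column-structure}), we may equally check on every tuple $(\ell m_1,\ldots,\ell m_d)$.

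First I will record the degree and homogeneity. Each coefficient of $q_s^L$ in $\lambda_1^2,\lambda_1\lambda_2,\lambda_2^2$ is a linear form in the $y$-variables, since $q_s(y;z)$ is linear in $y$ and substituting $z=\lambda_1\xi+\lambda_2\zeta$ keeps this linearity. The same holds for $Q_u^L$ and $Q_w^L$, which are bilinear in $(y,u)$ and $(y,w)$ respectively. The Sylvester determinant of \Cref{def:sylvester} is a $4\times4$ determinant whose entries are these coefficients or zero. Hence $\Res(q_r^L,q_s^L)$ is homogeneous of degree $4$ in $y$. Likewise $\Res(Q_u^L,Q_w^L)$ is of degree $4$ in $y$ and bihomogeneous of degree $(2,2)$ in $(u,w)$.

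Next comes the vanishing. On $\mathcal C_a$ we write $q_s=\ell m_s$, so $q_s^L=\ell^L m_s^L$, where $\ell^L(\lambda)=\ell(\lambda_1\xi+\lambda_2\zeta)$ is a binary linear form. Then
\[
Q_u^L=\ell^L\Bigl(\sum_s u_sm_s^L\Bigr),\qquad Q_w^L=\ell^L\Bigl(\sum_s w_sm_s^L\Bigr).
\]
If $\ell^L\neq0$, the two binary quadrics share the factor $\ell^L$, so the resultant vanishes by the criterion after \Cref{def:sylvester}. To avoid degenerate cases, I will instead use the explicit syzygy argument, which is uniform. Take $p=\sum_s u_sm_s^L$ and $p'=\sum_s w_sm_s^L$. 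Then $p'\cdot Q_u^L-p\cdot Q_w^L=0$.

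Consider first the case where $(p',-p)$ is nonzero. The vector of its coefficients is then a nonzero kernel vector of the (transposed) Sylvester matrix, so the determinant is $0$.

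Now consider the degenerate cases. If $p=p'=0$, or if $\ell^L=0$, then at least one of $Q_u^L,Q_w^L$ is identically zero. In that case two rows of the Sylvester matrix vanish, and the determinant is again $0$. In every case the resultant vanishes at each point of $\mathcal C_a$, for every $u,w$.

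Finally, I view $\Res(Q_u^L,Q_w^L)$ as a polynomial in $u,w$ with coefficients in $\CC[y]$. It vanishes for all $(u,w)\in\CC^{2d}$ at every point of $\mathcal C_a$, so each coefficient vanishes on $\mathcal C_a$; each coefficient is quartic in $y$. The slice statement is the special case $u=e_r$, $w=e_s$.

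The only delicate step is handling the degenerate cases, where the common factor or a cofactor is zero. I expect the kernel-vector formulation of the syzygy to settle these without using the iff-characterization of the resultant.
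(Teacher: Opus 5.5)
Your proposal is correct and follows essentially the same route as the paper: it uses the factorization $q_s=\ell m_s$ from \Cref{prop:cross-column}, notes that all restricted quadrics and their linear combinations share the factor $\ell^L$ so the Sylvester determinant vanishes, and gets the quartic degree from the linearity of the coefficients in $y$. The one difference is that you exhibit the explicit left-kernel vector $(p',-p)$ and treat separately the case where both quadrics vanish, rather than citing the common-factor criterion (which the paper itself motivates by the same syzygy $uf+vg=0$); this covers the degenerate case $\ell^L=0$ that the paper's proof passes over.
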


\begin{proof}
By \Cref{prop:cross-column}, for every point of $\mathcal C_a$ and every slice index $s$, we have
$$
q_s(z)=\ell(z)m_s(z),
$$
where $\ell(z)$ is independent of $s$. Restricting to the line $L$ gives
$$
q_s^L(\lambda_1,\lambda_2)
=
\ell(\lambda_1\xi+\lambda_2\zeta)\,m_s(\lambda_1\xi+\lambda_2\zeta),
$$
so all restricted binary quadrics share the common factor $\ell(\lambda_1\xi+\lambda_2\zeta)$. Hence $\Res(q_r^L,q_s^L)=0$ on $\mathcal C_a$ for every pair $r,s\in[d]$. The same argument applies to the linear combinations $Q_u^L$ and $Q_w^L$, since both are divisible by the same factor $\ell(\lambda_1\xi+\lambda_2\zeta)$. Therefore $\Res(Q_u^L,Q_w^L)$ also vanishes on $\mathcal C_a$. Finally, each restricted quadric $q_s^L$ has coefficients linear in $y$, and the Sylvester resultant of two binary quadrics is quadratic in the coefficients of each quadric. It follows that $\Res(q_r^L,q_s^L)$ and $\Res(Q_u^L,Q_w^L)$ are homogeneous quartics in $y$.
\end{proof}

\begin{example}\label{ex:resultant_quartic}
Let $d=3$, $d'=1$, $t=2$, and $a\geq1$, and fix $j=1$ and $n=2$. Consider the line
$L=\PP(\operatorname{Span}\{e_1,e_2\})\subseteq\PP^2$, so
$z=\lambda_1e_1+\lambda_2e_2=(\lambda_1,\lambda_2,0)$. 
For each slice $s\in\{1,2,3\}$, the restricted slice quadric has the form
$$
q_s^L(y;\lambda_1,\lambda_2)
:=
q_s(y;\lambda_1e_1+\lambda_2e_2)
=
a_s\lambda_1^2+2b_s\lambda_1\lambda_2+c_s\lambda_2^2,
$$
where
$$
a_s=y_{(1,2),(1,2),(s,1)},\qquad
b_s=y_{(1,2),(2,2),(s,1)},\qquad
c_s=y_{(2,2),(2,2),(s,1)}.
$$
Taking the Sylvester resultant of the first two slices gives the quartic
$$
\operatorname{Res}(q_1^L,q_2^L)
=
a_1^2c_2^2-2a_1a_2c_1c_2-4a_1b_1b_2c_2+4a_1b_2^2c_1
+a_2^2c_1^2+4a_2b_1^2c_2-4a_2b_1b_2c_1.
$$
By \Cref{thm:slice_resultants}, this polynomial vanishes on $\mathcal C_a$.
\end{example}

\noindent \Cref{thm:slice_resultants} applies to every line
$L\subseteq\PP^{d-1}$ in the projective space of the context-column variables. 
In the computations below, we evaluate these resultant constructions only on a finite collection of chosen lines in order to obtain explicit quartic generators.

\begin{example}[\Cref{ex:d3_invariants} revisited]
We now return to the case $d=3$, $d'=1$, $t=2$, and $a=3$ of \Cref{ex:d3_invariants}. There are three slice quadrics $q_1,q_2,q_3$, and hence three restricted binary quadrics $q_1^L,q_2^L,q_3^L$ on any fixed line $L\subseteq\PP^2$. \Cref{thm:slice_resultants} gives two natural sources of quartic invariants on such a line. First, for each unordered pair of slices $\{r,s\}\subseteq\{1,2,3\}$, the Sylvester resultant $\operatorname{Res}(q_r^L,q_s^L)$ vanishes. Since there are three such pairs, this gives three quartics. Second, consider two generic linear combinations
$$
Q_u^L=u_1q_1^L+u_2q_2^L+u_3q_3^L,
\qquad
Q_w^L=w_1q_1^L+w_2q_2^L+w_3q_3^L.
$$
The resultant $\operatorname{Res}(Q_u^L,Q_w^L)$ vanishes identically, so every coefficient in the general variables $u_1,u_2,u_3$, $w_1,w_2,w_3$ is again a quartic invariant. In our computation, we extracted the coefficients of the six mixed monomials
$$
u_1^2w_2w_3,\quad u_2^2w_1w_3,\quad u_3^2w_1w_2,\quad
u_1u_2w_3^2,\quad u_1u_3w_2^2,\quad u_2u_3w_1^2.
$$

For a fixed line, the generic-linear-combination construction produces six quartics, while the pairwise construction produces three quartics.
In our \texttt{Macaulay2} computation, these nine quartics were not
independent: after forming the ideal they generate and applying
\texttt{trim}, we obtained six quartic generators for that line.

To obtain the full family needed to generate $J$, we selected a finite list of lines by hand, computed the resultant ideal associated to each line, and then added these ideals cumulatively. For our chosen collection of lines, the resulting computation recovered the remaining $72$ quartics in the minimal generating set of $J$ from \Cref{ex:d3_invariants}. We do not claim that this choice of lines is canonical or minimal; it is a computational device used to extract explicit generators.
\end{example}

\begin{remark}
In general, it is not clear which finite collections of lines produce enough resultant quartics to recover all generators of this type, nor how such lines should be chosen efficiently. Understanding the dependence on the chosen lines, and developing systematic ways to identify sufficient collections, would be an interesting direction for further study.
\end{remark}

\section{Summary} 
\label{sec:summary}

We have uncovered a rich algebraic structure governing the coefficients of the lightning self-attention map, partitioning these invariants into those arising from single-column monomials and those from cross-column monomials. The roles of these two families depend on the sequence length. Let $Y_{\mathrm{single}}$ and $Y_{\mathrm{cross}}$ denote the sets of formal ambient variables corresponding to the single-column and cross-column coefficients, respectively. In the single-token regime $t=1$, we have $Y_{\mathrm{cross}}=\varnothing$. By \Cref{prop:single-column-structure}, the single-column cubics are exactly the cubics of the form
\[
f=\ell q,
\qquad
\rank(q)\leq 2a.
\]
In particular, they lie on the $(2,1)$ Chow-type variety, with an additional rank constraint inherited from the attention matrix. 
If $a=1$, the single-column variety is the Chow variety of completely reducible~cubic~forms. 

For sequence lengths $t\geq2$, the cross-column coefficients determine the nonlinear geometry. By \Cref{prop:cross-column}, a representative cross-column block is exactly a tuple of quadrics
\[
(q_1,\ldots,q_d)
=
(\ell m_1,\ldots,\ell m_d),
\qquad
\dim\operatorname{Span}\{m_1,\ldots,m_d\}\leq a.
\]
Moreover, the sequence-copy and internal symmetrization relations of \Cref{sec:linear} identify the repeated coefficient blocks and express every variable $y\in Y_{\mathrm{single}}$ as a linear combination of variables in $Y_{\mathrm{cross}}$. Thus, by \Cref{cor:cross-column-reduction}, after imposing the linear relations, the attention variety is linearly isomorphic to the cross-column variety $\mathcal C_a$. In particular, the nonlinear single-column relations are redundant in this regime.

Let $J_{\mathrm{lin}}$ denote the ideal generated by the linear relations of \Cref{sec:linear}, and let
$J \subset \mathbb{R}[Y_{\mathrm{single}} \cup Y_{\mathrm{cross}}]$ be the defining ideal of the attention variety. For $t\geq2$, because $J$ contains $J_{\mathrm{lin}}$, any mixed polynomial
$F(Y_{\mathrm{single}},Y_{\mathrm{cross}})\in J$ reduces via substitution to a polynomial
$\widetilde{F}(Y_{\mathrm{cross}})\in J\cap \mathbb{R}[Y_{\mathrm{cross}}]$.
Thus the study of nonlinear relations may be reduced, modulo the linear relations, to relations among the cross-column coefficients.
Consequently, our search for the defining ideal reduces to classifying the linear relations together with the nonlinear invariants of the corresponding single-column or cross-column variety, depending on the sequence length.

In the rank-one cross-column case, \Cref{thm:rank-one-cross-column}
gives a complete set-theoretic description. In several small-dimensional
examples, our invariant constructions also recover the full defining
ideal computationally. The general set-theoretic and ideal-theoretic
completeness questions remain open and are discussed below.

\subsection*{Invariant families}

Based on the algebraic reductions established above, we isolate the following families of linear and nonlinear invariants for the attention variety.

\begin{itemize}

\item \emph{Linear relations} (\Cref{sec:linear}).
In a fixed output row, the realizable ambient coordinates satisfy sequence-copy relations that identify repeated coefficient blocks as the target column $j$ varies and, for fixed $j$, as the context column $n\neq j$ varies. Furthermore, when $t\geq2$, the internal symmetrization relations express each single-column 
coefficient
$y_j(\mathcal K)$ as a linear combination of cross-column 
coefficients
$y_{n,j}(\mathcal A,b)$ within a fixed output coordinate. 

\item \emph{Single-column Chow-type invariants} (\Cref{sec:single-column}). The single-column coefficients parametrize a cubic polynomial that factors into a linear and a quadratic form, with the quadratic factor having rank at most $2a$, placing the cubic on the $(2,1)$ Chow-type variety. The invariants are obtained from the vanishing maximal 
minors of the Lie flattening matrix $M_{\mathrm{Lie}}(f)$. 
Furthermore, when $2a+1<d$, the low rank of the attention parameter matrix $A=K^\top Q$ bounds the rank of the coefficient submatrix 
$N$, yielding additional determinantal invariants. 
When $a=1$, the single-column variety is the Chow variety of completely reducible cubic forms. 

\item \emph{Cross-column cubic invariants} (\Cref{prop:cross-column-pencil}). For any context column and target column, specific linear combinations of the symmetric tensor slices have rank at most $2$. The vanishing $3 \times 3$ minors of these slice matrices yield a family of cubic invariants. 

\item \emph{Attention rank constraints} (\Cref{prop:cross-column-flattening}). When the attention dimension is bottlenecked ($a < d$), the cross-column flattening matrix $F(y)$ has rank bounded by $a$. The vanishing $(a+1) \times (a+1)$ minors of this matrix generate invariant polynomials of degree $a+1$. 

\item \emph{Rank-one cross-column characterization} (\Cref{thm:rank-one-cross-column}). When $a=1$, the cross-column flattening and rank-two pencil conditions are not only necessary but also sufficient: together, 
the $2\times2$ minors of $F(y)$ and the cubic pencil relations cut out the cross-column variety $\mathcal C_1$ set-theoretically. 

\item \emph{Cross-column determinantal syzygies} (\Cref{thm:context_syzygies}). In the full-rank attention regime ($a \ge d$), dimensional constraints on the target space of the matrix $F(y)$ force exact linear dependencies among its maximal $d \times d$ minors. This dimensional overcrowding generates invariant polynomials of degree $d$.

\item \emph{Veronese-type invariants} (\Cref{thm:veronese}). For any integer $r$ such that $2 \le r \le \min(a,d)$, specific $r \times r$ minors of the cross-column coefficients evaluate to the coordinates of the $r$th Veronese embedding. The $2 \times 2$ minors of the associated catalecticant matrices yield determinantal invariants of degree $2r$. 

\item \emph{Sylvester resultants} (\Cref{thm:slice_resultants}). The quadratic forms defined by the cross-column coefficients share a common linear factor on the attention variety. After restricting to a line, they become binary quadrics, so their Sylvester resultants vanish and yield quartic invariants.

\end{itemize}

\subsection*{Open problems}

While the families of invariants in this paper capture the fundamental geometric constraints of the parametrization, we do not in general claim they form a complete minimal generating set for the defining ideal $J$. An exception is the rank-one cross-column case, where
\Cref{thm:rank-one-cross-column} gives a complete set-theoretic
description.
However, substantially more computational evidence would be needed to formulate a credible conjecture about minimal generators. 
Because symbolic methods quickly become infeasible as $d$, $a$, and $t$ grow, additional generators may appear in larger dimensions. 
Establishing ideal-theoretic completeness would require showing that the ideal generated by our invariants is prime and defines a variety of the same dimension as the attention variety, which appears difficult in general. 
This leaves several directions for future~work. 

\begin{itemize}

\item \emph{Completeness, defining equations, and minimality.} Determine whether the invariant families identified in this paper generate the full defining ideal $J$. Beyond the rank-one cross-column case of \Cref{thm:rank-one-cross-column}, determine which subfamilies already define the attention variety, at least set-theoretically, in different 
regimes of $d$, $a$, $t$. 
More modestly, characterize which invariants remain minimal generators. 

\item \emph{Degree bounds.} Obtain effective bounds on the degrees of minimal generators of the defining ideal in terms of $d$, $a$, and $t$.

\item \emph{Cross-row geometry.} Develop a systematic theory of cross-row invariants for $d'>1$, and study how these relations interact with the rowwise invariant families studied in this paper.

\item \emph{Stabilization under 
architecture growth.} 
As 
$d$, $a$, $t$, and $d'$ increase, the corresponding attention varieties are related by natural embeddings or projections. 
Determine whether the induced chains of defining ideals, under the corresponding maps of coefficient rings, eventually stabilize. 

\item \emph{Beyond the single-head shallow setting.} Derive analogous algebraic invariants for masked self-attention, multi-head attention, and deep attention models. 

\end{itemize}

\subsection*{Acknowledgment} 

This project has been supported by the DARPA AIQ grant HR00112520014. GM was partially supported by 
NSF grants 
DMS-2522495, 
DMS-2145630, 
CCF-2212520, 
the DFG SPP 2298 grant 464109215, 
and the BMFTR in DAAD project 57616814 (SECAI). 

The authors used AI language models ChatGPT (GPT-5.4 Thinking) and Gemini Pro for language editing and exploratory discussions. 
All mathematical results were developed and verified by the authors.

\bibliographystyle{plain} 
\bibliography{references}

@inproceedings{10.1609/aaai.v37i7.26055,
  author       = {Likhosherstov, Valerii and Choromanski, Krzysztof and Weller, Adrian},
  title        = {On the expressive flexibility of self-attention matrices},
  booktitle    = {Proceedings of the Thirty-Seventh AAAI Conference on Artificial Intelligence and Thirty-Fifth Conference on Innovative Applications of Artificial Intelligence and Thirteenth Symposium on Educational Advances in Artificial Intelligence},
  year         = {2023},
  doi          = {10.1609/aaai.v37i7.26055},
}

@inproceedings{NEURIPS2022_1ba5f641,
  author       = {Luo, Shengjie and Li, Shanda and Zheng, Shuxin and Liu, Tie-Yan and Wang, Liwei and He, Di},
  title        = {Your Transformer May Not be as Powerful as You Expect},
  booktitle    = {Advances in Neural Information Processing Systems},
  volume       = {35},
  pages        = {4301--4315},
  year         = {2022},
}

@inproceedings{Yun2020Are,
  author       = {Chulhee Yun and Srinadh Bhojanapalli and Ankit Singh Rawat and Sashank Reddi and Sanjiv Kumar},
  title        = {Are Transformers universal approximators of sequence-to-sequence functions?},
  booktitle    = {International Conference on Learning Representations},
  year         = {2020},
}

@book{landsberg2012tensors,
  author       = {Landsberg, Joseph M.},
  title        = {Tensors: Geometry and Applications},
  series       = {Graduate Studies in Mathematics},
  volume       = {128},
  publisher    = {American Mathematical Society},
  address      = {Providence, RI},
  year         = {2012},
}

@article{pucci1998veronese,
  author       = {Pucci, Mario},
  title        = {The {V}eronese variety and catalecticant matrices},
  journal      = {Journal of Algebra},
  volume       = {202},
  number       = {1},
  pages        = {72--95},
  year         = {1998},
}

@inproceedings{vaswani2017attention,
  author       = {Ashish Vaswani and
               Noam Shazeer and
               Niki Parmar and
               Jakob Uszkoreit and
               Llion Jones and
               Aidan N. Gomez and
               {\L}ukasz Kaiser and
               Illia Polosukhin},
  title        = {Attention Is All You Need},
  booktitle    = {Advances in Neural Information Processing Systems 30},
  pages        = {5998--6008},
  year         = {2017},
}

@article{vuckovic2020mathematical,
  author       = {James Vuckovic and
             Aristide Baratin and
             R{\'e}mi Tachet des Combes},
  title        = {A Mathematical Theory of Attention},
  journal      = {arXiv:2007.02876},
  year         = {2020},
  doi          = {10.48550/arXiv.2007.02876},
}

@article{geshkovski2025mathematical,
  author       = {Borjan Geshkovski and
             Cyril Letrouit and
             Yury Polyanskiy and
             Philippe Rigollet},
  title        = {A Mathematical Perspective on Transformers},
  journal      = {Bulletin of the American Mathematical Society},
  volume       = {62},
  number       = {3},
  pages        = {427--479},
  year         = {2025},
  doi          = {10.1090/bull/1863},
}

@article{lai2024attention,
  author       = {Zehua Lai and
             Lek-Heng Lim and
             Yucong Liu},
  title        = {Attention Is a Smoothed Cubic Spline},
  journal      = {arXiv:2408.09624},
  year         = {2024},
  doi          = {10.48550/arXiv.2408.09624},
}

@inproceedings{henry2025geometry,
  author       = {Nathan W. Henry and
               Giovanni Luca Marchetti and
               Kathl{\'e}n Kohn},
  title        = {Geometry of Lightning Self-Attention: Identifiability and Dimension},
  booktitle    = {The Thirteenth International Conference on Learning Representations},
  year         = {2025},
}

@inproceedings{kim2021lipschitz,
  author       = {Hyunjik Kim and George Papamakarios and Andriy Mnih},
  title        = {The {L}ipschitz Constant of Self-Attention},
  booktitle    = {Proceedings of the 38th International Conference on Machine Learning},
  series       = {Proceedings of Machine Learning Research},
  volume       = {139},
  pages        = {5562--5571},
  year         = {2021},
}

@article{allman2003phylogenetic,
  author       = {Allman, Elizabeth S. and Rhodes, John A.},
  title        = {Phylogenetic invariants for the general {M}arkov model of sequence mutation},
  journal      = {Mathematical Biosciences},
  volume       = {186},
  number       = {2},
  pages        = {113--144},
  year         = {2003},
  doi          = {10.1016/j.mbs.2003.08.004},
}

@article{allman2008phylogenetic,
  author       = {Allman, Elizabeth S. and Rhodes, John A.},
  title        = {Phylogenetic ideals and varieties for the general {M}arkov model},
  journal      = {Advances in Applied Mathematics},
  volume       = {40},
  number       = {2},
  pages        = {127--148},
  year         = {2008},
  doi          = {10.1016/j.aam.2006.10.002},
}

@article{drton2007algebraic,
  author       = {Drton, Mathias and Sullivant, Seth},
  title        = {Algebraic statistical models},
  journal      = {Statistica Sinica},
  volume       = {17},
  pages        = {1273--1297},
  year         = {2007},
}

@inproceedings{kileel2019expressive,
  author       = {Kileel, Joe and Trager, Matthew and Bruna, Joan},
  title        = {On the Expressive Power of Deep Polynomial Neural Networks},
  booktitle    = {Advances in Neural Information Processing Systems 32},
  pages        = {10310--10319},
  year         = {2019},
}

@article{brandenburg2024tropical,
  author       = {Brandenburg, Marie-Charlotte and Loho, Georg and Mont{\'u}far, Guido},
  title        = {The Real Tropical Geometry of Neural Networks for Binary Classification},
  journal      = {Transactions on Machine Learning Research},
  year         = {2024},
}

@article{kubjas2024geometry,
  author       = {Kubjas, Kaie and Li, Jiayi and Wiesmann, Maximilian},
  title        = {Geometry of Polynomial Neural Networks},
  journal      = {Algebraic Statistics},
  volume       = {15},
  number       = {2},
  pages        = {295--328},
  year         = {2024},
  doi          = {10.2140/astat.2024.15.295},
}

@article{alexandr2025constraining,
  author       = {Alexandr, Yulia and Mont{\'u}far, Guido},
  title        = {Constraining the Outputs of {R}e{LU} Neural Networks},
  journal      = {arXiv:2508.03867},
  year         = {2025},
  doi          = {10.48550/arXiv.2508.03867},
}

@article{allman2009identifiability,
  author       = {Allman, Elizabeth S. and Matias, Catherine and Rhodes, John A.},
  title        = {Identifiability of parameters in latent structure models with many observed variables},
  journal      = {The Annals of Statistics},
  volume       = {37},
  number       = {6A},
  pages        = {3099--3132},
  year         = {2009},
  doi          = {10.1214/09-AOS689},
}

@inproceedings{usevich2025identifiability,
  author       = {Usevich, Konstantin and Borsoi, Ricardo and D{\'e}rand, Clara and Clausel, Marianne},
  title        = {Identifiability of Deep Polynomial Neural Networks},
  booktitle    = {Advances in Neural Information Processing Systems},
  year         = {2025},
}

@article{venturi2019spurious,
  author       = {Venturi, Luca and Bandeira, Afonso S. and Bruna, Joan},
  title        = {Spurious Valleys in One-hidden-layer Neural Network Optimization Landscapes},
  journal      = {Journal of Machine Learning Research},
  volume       = {20},
  number       = {133},
  pages        = {1--34},
  year         = {2019},
}

@article{kohn2022geometry,
  author       = {Kohn, Kathl{\'e}n and Merkh, Thomas and Mont{\'u}far, Guido and Trager, Matthew},
  title        = {Geometry of Linear Convolutional Networks},
  journal      = {SIAM Journal on Applied Algebra and Geometry},
  volume       = {6},
  number       = {3},
  pages        = {368--406},
  year         = {2022},
  doi          = {10.1137/21M1441183},
}

@article{alexandr2026verification,
  author       = {Alexandr, Yulia and Duan, Hao and Mont{\'u}far, Guido},
  title        = {Robustness Verification of Polynomial Neural Networks},
  journal      = {arXiv:2602.06105},
  year         = {2026},
  doi          = {10.48550/arXiv.2602.06105},
}

@inproceedings{ehlers2017formal,
  author       = {R{\"u}diger Ehlers},
  title        = {Formal Verification of Piece-Wise Linear Feed-Forward Neural Networks},
  booktitle    = {Automated Technology for Verification and Analysis},
  series       = {Lecture Notes in Computer Science},
  volume       = {10482},
  pages        = {269--286},
  year         = {2017},
  doi          = {10.1007/978-3-319-68167-2_19},
}

@inproceedings{katz2017reluplex,
  author       = {Guy Katz and Clark Barrett and David Dill and Kyle Julian and Mykel Kochenderfer},
  title        = {Reluplex: An Efficient {SMT} Solver for Verifying Deep Neural Networks},
  booktitle    = {Computer Aided Verification},
  series       = {Lecture Notes in Computer Science},
  volume       = {10426},
  pages        = {97--117},
  year         = {2017},
  doi          = {10.1007/978-3-319-63387-9_5},
}

@inproceedings{tjeng2019mip,
  author       = {Vincent Tjeng and Kai Y. Xiao and Russ Tedrake},
  title        = {Evaluating Robustness of Neural Networks with Mixed Integer Programming},
  booktitle    = {International Conference on Learning Representations},
  year         = {2019},
}

@inproceedings{gehr2018ai2,
  author       = {Timon Gehr and Matthew Mirman and Dana Drachsler-Cohen and Petar Tsankov and Swarat Chaudhuri and Martin Vechev},
  title        = {{AI}$^2$: Safety and Robustness Certification of Neural Networks with Abstract Interpretation},
  booktitle    = {2018 IEEE Symposium on Security and Privacy},
  pages        = {3--18},
  year         = {2018},
  doi          = {10.1109/SP.2018.00058},
}

@article{singh2019deeppoly,
  author       = {Gagandeep Singh and Timon Gehr and Markus P{\"u}schel and Martin Vechev},
  title        = {An Abstract Domain for Certifying Neural Networks},
  journal      = {Proceedings of the ACM on Programming Languages},
  volume       = {3},
  number       = {POPL},
  pages        = {1--30},
  year         = {2019},
  doi          = {10.1145/3290354},
  note         = {Article 41},
}

@inproceedings{dvijotham2018dual,
  author       = {Krishnamurthy Dvijotham and Robert Stanforth and Sven Gowal and Timothy A. Mann and Pushmeet Kohli},
  title        = {A Dual Approach to Scalable Verification of Deep Networks},
  booktitle    = {Uncertainty in Artificial Intelligence},
  pages        = {550--559},
  year         = {2018},
}

@inproceedings{wong2018convex,
  author       = {Eric Wong and J. Zico Kolter},
  title        = {Provable Defenses against Adversarial Examples via the Convex Outer Adversarial Polytope},
  booktitle    = {Proceedings of the 35th International Conference on Machine Learning},
  series       = {Proceedings of Machine Learning Research},
  volume       = {80},
  pages        = {5286--5295},
  year         = {2018},
}

@article{bunel2020bab,
  author       = {Rudy Bunel and Jingyue Lu and Ilker Turkaslan and Philip H. S. Torr and Pushmeet Kohli and M. Pawan Kumar},
  title        = {Branch and Bound for Piecewise Linear Neural Network Verification},
  journal      = {Journal of Machine Learning Research},
  volume       = {21},
  number       = {42},
  pages        = {1--39},
  year         = {2020},
}

@inproceedings{wang2021beta,
  author       = {Shiqi Wang and Huan Zhang and Kaidi Xu and Xue Lin and Suman Jana and Cho-Jui Hsieh and J. Zico Kolter},
  title        = {{$\beta$}-{CROWN}: Efficient Bound Propagation with Per-neuron Split Constraints for Neural Network Robustness Verification},
  booktitle    = {Advances in Neural Information Processing Systems 34},
  year         = {2021},
}

@article{albarghouthi2021intro,
  author       = {Aws Albarghouthi},
  title        = {Introduction to Neural Network Verification},
  journal      = {Foundations and Trends in Programming Languages},
  volume       = {7},
  number       = {1--2},
  pages        = {1--157},
  year         = {2021},
  doi          = {10.1561/2500000051},
}

@article{guan2018brill,
  author       = {Yonghui Guan},
  title        = {Brill's equations as a {GL(V)}-module},
  journal      = {Linear Algebra and its Applications},
  volume       = {548},
  pages        = {273--292},
  year         = {2018},
  doi          = {10.1016/j.laa.2018.02.026},
}

@inproceedings{briand2004brill,
  author       = {Emmanuel Briand},
  title        = {Brill's Equations for the Subvariety of Factorizable Forms},
  booktitle    = {Actas del IX Encuentros de {\'A}lgebra Computacional y Aplicaciones (EACA 2004)},
  pages        = {59--63},
  year         = {2004},
}

@misc{M2,
  author       = {Grayson, Daniel R. and Stillman, Michael E.},
  title        = {Macaulay2, a software system for research in algebraic geometry},
  howpublished = {Available at \url{https://www.macaulay2.com}},
}

@book{sturmfels2018invitation,
  author       = {Micha{\l}ek, Mateusz and Sturmfels, Bernd},
  title        = {Invitation to Nonlinear Algebra},
  series       = {Graduate Studies in Mathematics},
  volume       = {211},
  publisher    = {American Mathematical Society},
  address      = {Providence, RI},
  year         = {2021},
}

@article{LORS2019,
  author       = {Samuel Lundqvist and Alessandro Oneto and Bruce Reznick and Boris Shapiro},
  title        = {On generic and maximal $k$-ranks of binary forms},
  journal      = {Journal of Pure and Applied Algebra},
  volume       = {223},
  number       = {5},
  pages        = {2062--2079},
  year         = {2019},
  doi          = {10.1016/j.jpaa.2018.08.015},
}

@inproceedings{fu2023what,
  author       = {Hengyu Fu and Tianyu Guo and Yu Bai and Song Mei},
  title        = {What can a Single Attention Layer Learn? A Study Through the Random Features Lens},
  booktitle    = {Thirty-seventh Conference on Neural Information Processing Systems},
  year         = {2023},
}

@article{cavender1987invariants,
  author       = {Cavender, James A. and Felsenstein, Joseph},
  title        = {Invariants of phylogenies in a simple case with discrete states},
  journal      = {Journal of Classification},
  volume       = {4},
  number       = {1},
  pages        = {57--71},
  year         = {1987},
  doi          = {10.1007/BF01890075},
}

@article{lake1987phylogenetic,
  author       = {Lake, James A.},
  title        = {A rate-independent technique for analysis of nucleic acid sequences: Evolutionary parsimony},
  journal      = {Molecular Biology and Evolution},
  volume       = {4},
  number       = {2},
  pages        = {167--191},
  year         = {1987},
  doi          = {10.1093/oxfordjournals.molbev.a040433},
}

@article{shahverdi2025algebraic,
  author       = {Shahverdi, Vahid},
  title        = {Algebraic complexity and neurovariety of linear convolutional networks},
  journal      = {Acta Universitatis Sapientiae, Mathematica},
  volume       = {17},
  year         = {2025},
  doi          = {10.1007/s44426-025-00002-2},
  note         = {Article 2},
}

@article{finkel2025activation,
  author       = {Finkel, Bella and Rodriguez, Jose Israel and Wu, Chenxi and Yahl, Thomas},
  title        = {Activation degree thresholds and expressiveness of polynomial neural networks},
  journal      = {Algebraic Statistics},
  volume       = {16},
  number       = {2},
  pages        = {113--130},
  year         = {2025},
  doi          = {10.2140/astat.2025.16.113},
}

@inproceedings{marchetti2025algebra,
  author       = {Marchetti, Giovanni Luca and Shahverdi, Vahid and Mereta, Stefano and Trager, Matthew and Kohn, Kathl{\'e}n},
  title        = {Position: Algebra Unveils Deep Learning -- An Invitation to Neuroalgebraic Geometry},
  booktitle    = {Proceedings of the 42nd International Conference on Machine Learning},
  series       = {Proceedings of Machine Learning Research},
  volume       = {267},
  pages        = {81759--81773},
  year         = {2025},
}

@article{grosdos2025algebraicgeometryrationalneural,
  author       = {Grosdos, Alexandros and Robeva, Elina and Zubkov, Maksym},
  title        = {Algebraic Geometry of Rational Neural Networks},
  journal      = {arXiv:2509.11088},
  year         = {2025},
  doi          = {10.48550/arXiv.2509.11088},
}

@article{seigal2017mixtures,
	title={Mixtures and products in two graphical models},
	author={Anna Seigal and Guido Mont\'ufar},
	journal={Journal of Algebraic Statistics},
	volume={9},
	number ={1},
	doi={10.18409/jas.v9i1.90},
	pages={1--20},
	year={2018},	
}

@article{kohn2024,
author = {Kohn, Kathl{\'e}n and Mont{\'u}far, Guido and Shahverdi, Vahid and Trager, Matthew},
title = {Function Space and Critical Points of Linear Convolutional Networks},
journal = {SIAM Journal on Applied Algebra and Geometry},
volume = {8},
number = {2},
pages = {333-362},
year = {2024},
doi = {10.1137/23M1565504}
}

@inproceedings{Trager2019PureAS,
  title={Pure and Spurious Critical Points: a Geometric Study of Linear Networks},
  author={Matthew Trager and Kathl{\'e}n Kohn and Joan Bruna},
  booktitle={International Conference on Machine Learning},
  year={2020}
}

@article{doi:10.1137/16M1077489,
author = {Mont{\'u}far, Guido and Morton, Jason},
title = {Dimension of Marginals of {K}ronecker Product Models},
journal = {SIAM Journal on Applied Algebra and Geometry},
volume = {1},
number = {1},
pages = {126-151},
year = {2017},
doi = {10.1137/16M1077489},
}

@incollection{cueto2010geometry,
  author    = {Cueto, Maria Angelica and Morton, Jason and Sturmfels, Bernd},
  title     = {Geometry of the Restricted {B}oltzmann Machine},
  booktitle = {Algebraic Methods in Statistics and Probability II},
  series    = {Contemporary Mathematics},
  volume    = {516},
  pages     = {135--153},
  publisher = {American Mathematical Society},
  year      = {2010},
  address = {Providence, RI},
}

@article{GARCIA2005331,
title = {Algebraic geometry of {B}ayesian networks},
journal = {Journal of Symbolic Computation},
volume = {39},
number = {3},
pages = {331-355},
year = {2005},
note = {Special issue on the occasion of MEGA 2003},
issn = {0747-7171},
doi = {10.1016/j.jsc.2004.11.007},
author = {Luis David Garcia and Michael Stillman and Bernd Sturmfels}
}

@InProceedings{pmlr-v80-zhang18i,
  title = 	 {Tropical Geometry of Deep Neural Networks},
  author =       {Zhang, Liwen and Naitzat, Gregory and Lim, Lek-Heng},
  booktitle = 	 {Proceedings of the 35th International Conference on Machine Learning},
  pages = 	 {5824--5832},
  year = 	 {2018},
  editor = 	 {Dy, Jennifer and Krause, Andreas},
  volume = 	 {80},
  series = 	 {Proceedings of Machine Learning Research},
  month = 	 {10--15 Jul},
  publisher =    {PMLR},
  address = {Stockholmsmässan, Stockholm, Sweden},
}

@Article{math6120314,
AUTHOR = {Bernardi, Alessandra and Carlini, Enrico and Catalisano, Maria Virginia and Gimigliano, Alessandro and Oneto, Alessandro},
TITLE = {The Hitchhiker Guide to: Secant Varieties and Tensor Decomposition},
JOURNAL = {Mathematics},
VOLUME = {6},
YEAR = {2018},
NUMBER = {12},
ARTICLE-NUMBER = {314},
ISSN = {2227-7390},
DOI = {10.3390/math6120314}
}


\appendix\crefalias{section}{appendix}

\section{An explicit single-column Lie algebra flattening example}
\label{app:single-column-example}

We give here the explicit computation underlying the single-column Lie
algebra flattening invariants in \Cref{ex:type2-relation}. For
$d=a=3$ and $d'=t=1$, we write out the single-column coefficient
parametrization, construct a linear syzygy among the first partial
derivatives, and pass to the corresponding Lie algebra flattening
$M_{\mathrm{Lie}}(f)$. We then record the \texttt{Macaulay2}
computation showing that maximal minors of this flattening generate the
defining ideal in this case.

\begin{example}[$d=3$, $d'=t = 1$] We illustrate the single-column Lie algebra flattening construction
in the smallest nontrivial case by passing from the explicit coefficient parametrization to a linear syzygy matrix, and then to the Lie algebra flattening. 

\medskip
{\noindent \underline{Parametrization and ambient coordinates.}}
Since $d'=t=1$, we suppress the output and column indices and write $y_{k_1k_2k_3}$ for the scaled coefficient of the monomial type $x_{k_1}x_{k_2}x_{k_3}$, where $k_1\le k_2\le k_3$. In this case, $x=(x_1,x_2,x_3)^\top$, $v=[v_1,v_2,v_3]$, and $A=(a_{ij})\in\mathbb{R}^{3\times 3}$. The parametrization is
\begin{equation*}
\scalebox{0.9}{$
\begin{aligned}
\mu: (A, V) \mapsto \Big(&\underbrace{a_{11}v_1}_{y_{111}},\;\underbrace{a_{22}v_2}_{y_{222}},\;\underbrace{a_{33}v_3}_{y_{333}},\;\underbrace{\frac{1}{3}(a_{12}v_1 + a_{21}v_1 + a_{11}v_2)}_{y_{112}},\;\underbrace{\frac{1}{3}(a_{23}v_2 + a_{32}v_2 + a_{22}v_3)}_{y_{223}},\;\underbrace{\frac{1}{3}(a_{33}v_1 + a_{13}v_3 + a_{31}v_3)}_{y_{133}},\\
    &
    \underbrace{\frac{1}{3}(a_{22}v_1 + a_{12}v_2 + a_{21}v_2)}_{y_{122}},\; \underbrace{\frac{1}{3}(a_{33}v_2 + a_{23}v_3 + a_{32}v_3)}_{y_{233}},\;
    \underbrace{\frac{1}{3}(a_{13}v_1 + a_{31}v_1 + a_{11}v_3)}_{y_{113}},\\
    &
    \underbrace{\frac{1}{6}(a_{23}v_1 + a_{32}v_1 + a_{13}v_2 + a_{31}v_2 + a_{12}v_3 + a_{21}v_3)}_{y_{123}}\Big).
\end{aligned}
$}
\end{equation*}

\medskip
\noindent{\underline{Factorization and a linear syzygy.}} The implicit ideal $J$ of $\mathcal{V} = \overline{\mathrm{im}(\mu)}$ has codimension 2 and is generated by 35 generators of degree 8. Since $t=1$, all variables come from a single column $X$ and hence the lightning self-attention map takes the simplified form $x\mapsto x^\top Ax(vx)$. The resulting polynomial factors into a linear and a quadratic form, a geometric property that its coefficients must reflect. We can write $f(x)=x^\top Ax(vx)=\ell(x)q(x)$ for some linear form $\ell$ and quadratic form $q$. Taking the partial derivatives with respect to each variable yields:
\begin{align*}
    f_{x_1} = \ell_{x_1} q + \ell q_{x_1},\qquad
    f_{x_2} = \ell_{x_2} q + \ell q_{x_2},\qquad
    f_{x_3} = \ell_{x_3} q + \ell q_{x_3}.
\end{align*}
Thus each first partial derivative is expressed in terms of the same two factors $\ell$ and $q$. To begin eliminating the common quadratic term, we compare two of these equations. After relabeling coordinates if necessary, we may assume $\ell_{x_1}\neq 0$; for instance,
\begin{align*}
    \ell_{x_2} f_{x_1} &= \ell_{x_2}\ell_{x_1} q + \ell_{x_2}\ell q_{x_1}, \qquad
    \ell_{x_1} f_{x_2} = \ell_{x_1}\ell_{x_2} q + \ell_{x_1}\ell q_{x_2} . 
\end{align*}
\noindent Subtracting the second equation from the first exactly cancels the $q$ term: 
\begin{equation*}
    \ell_{x_2} f_{x_1} - \ell_{x_1} f_{x_2} = \ell (\ell_{x_2} q_{x_1} - \ell_{x_1} q_{x_2}) . 
\end{equation*}
\noindent Notice that $q$ is a quadratic form, so its partial derivatives $q_{x_1}$ and $q_{x_2}$ are linear forms. Let $L_{12}(x) = \ell_{x_2} q_{x_1} - \ell_{x_1} q_{x_2}$, which is also a linear form. 
Thus, we have the relation: 
\begin{equation*}
    \ell_{x_2} f_{x_1} - \ell_{x_1} f_{x_2} = \ell L_{12}. 
\end{equation*}
\noindent Following the same logic for another pair of partial derivatives, we obtain:
\begin{align*}
    \ell_{x_3} f_{x_1} - \ell_{x_1} f_{x_3} &= \ell L_{13} . 
\end{align*}
\noindent We can then eliminate the common linear factor $\ell$ by cross-multiplying the 
two relations by $L_{13}$ and $L_{12}$ respectively:
\begin{equation*}
    L_{13}(\ell_{x_2} f_{x_1} - \ell_{x_1} f_{x_2}) = L_{13} \ell L_{12} = L_{12}(\ell_{x_3} f_{x_1} - \ell_{x_1} f_{x_3}) . 
\end{equation*}
\noindent Rearranging this equation yields a syzygy among the partial derivatives of $f$:
\begin{equation*}
    (L_{13}\ell_{x_2} - L_{12}\ell_{x_3})f_{x_1} - (L_{13}\ell_{x_1})f_{x_2} + (L_{12}\ell_{x_1})f_{x_3} = 0 . 
\end{equation*}

\noindent Because the $L_{ij}$ are linear forms in $x$ and the $\ell_{x_i}$ are constants, the coefficients in parentheses are entirely linear forms in $x$. This demonstrates that the partial derivatives $f_{x_i}$ are linearly dependent over the ring of polynomials, specifically exhibiting a syzygy with linear coefficients.

\medskip
\noindent{\underline{The linear syzygy matrix.}} Hence, to explicitly compute this relation, we search for the coefficients of three general linear forms $\alpha, \beta, \gamma$ such that $\alpha f_{x_1} + \beta f_{x_2} + \gamma f_{x_3} = 0$. We parameterize these forms as:
\begin{align*}
    \alpha(x) &= \alpha_1 x_1 + \alpha_2 x_2 + \alpha_3 x_3, \\
    \beta(x) &= \beta_1 x_1 + \beta_2 x_2 + \beta_3 x_3, \\
    \gamma(x) &= \gamma_1 x_1 + \gamma_2 x_2 + \gamma_3 x_3.
\end{align*}

Substituting these parameterized expressions back into the equation $\alpha f_{x_1} + \beta f_{x_2} + \gamma f_{x_3} = 0$ multiplies our unknown linear forms by the quadratic forms $f_{x_i}$. Expanding these products yields a single homogeneous cubic polynomial in the variables $x_1, x_2, x_3$. For this relation to hold universally, the resulting cubic polynomial must identically vanish, meaning the aggregated coefficient associated with each of the $10$ standard cubic monomials ($x_1^3, x_1^2x_2, \dots, x_3^3$) must be exactly zero.

By gathering the terms for each of these $10$ monomials, we obtain a system of $10$ linear equations in our $9$ unknown coefficients ($\alpha_i, \beta_i, \gamma_i$). The coefficient matrix of this linear system is precisely the $10 \times 9$ linear syzygy matrix $M$, where the rows correspond to the cubic monomials and the columns represent the unknowns. To make this construction explicit, we introduce variables $a,b,c,\dots,j$ for the scaled coefficient coordinates of the self-attention cubic $f(x)$, namely:
\begin{align*}
    a &= y_{111},\; b = y_{222},\; c = y_{333},\; d = y_{112},\; e = y_{223},\\
    f &= y_{133},\; 
    g = y_{122},\; 
    h = y_{233},\; 
    i = y_{113},\; 
    j = y_{123}.
\end{align*}
\noindent Expanding the equation $\alpha f_{x_1} + \beta f_{x_2} + \gamma f_{x_3} = 0$ and extracting the linear system $M \mathbf{v} = 0$, where $\mathbf{v} = (\alpha_1, \alpha_2, \alpha_3, \beta_1, \beta_2, \beta_3, \gamma_1, \gamma_2, \gamma_3)^\top$, yields the following explicit matrix:

$$
M = \begin{array}{c|ccccccccc}
& \alpha_1 & \alpha_2 & \alpha_3 & \beta_1 & \beta_2 & \beta_3 & \gamma_1 & \gamma_2 & \gamma_3 \\
\hline
x_1^3     & a  & 0 & 0 & d  & 0  & 0 & i  & 0 & 0 \\
x_1^2x_2  & 2d & a & 0 & 2g & d  & 0 & 2j & i & 0 \\
x_1^2x_3  & 2i & 0 & a & 2j & 0  & d & 2f & 0 & i \\
x_1x_2^2  & g  & 2d& 0 & b  & 2g & 0 & e  & 2j& 0 \\
x_1x_2x_3 & 2j & 2i& 2d& 2e & 2j & 2g& 2h & 2f& 2j \\
x_1x_3^2  & f  & 0 & 2i& h  & 0  & 2j& c  & 0 & 2f \\
x_2^3     & 0  & g & 0 & 0  & b  & 0 & 0  & e & 0 \\
x_2^2x_3  & 0  & 2j& g & 0  & 2e & b & 0  & 2h& e \\
x_2x_3^2  & 0  & f & 2j& 0  & h  & 2e& 0  & c & 2h \\
x_3^3     & 0  & 0 & f & 0  & 0  & h & 0  & 0 & c
\end{array}
$$
For a general factorization $f=\ell q$, the construction above gives a non-trivial linear syzygy among its partial derivatives. This guarantees that there exists at least one set of linear forms $\alpha, \beta, \gamma$ (not all identically zero) such that $\alpha f_{x_1} + \beta f_{x_2} + \gamma f_{x_3} = 0$. Translating this back to our matrix formulation, this guarantee means there is a non-zero solution vector $\mathbf{v}$ that satisfies the homogeneous linear system $M \mathbf{v} = 0$. Hence, the kernel of $M$ is non-trivial. Consequently, the columns of $M$ must be linearly dependent. Since $M$ is a $10 \times 9$ matrix, its maximum possible column rank is $9$. However, because its columns are linearly dependent, all ten maximal $9 \times 9$ minors must vanish. 

\medskip
\noindent  {\underline{Passing from $\mathfrak{gl}_3$ to $\mathfrak{sl}_3$.}} However, the matrix $M$ represents the infinitesimal $\mathfrak{gl}_3$-action on cubic forms via first-order differential operators. Because $f(x)$ is homogeneous of degree~$3$, the scalar direction acts by the Euler operator. Applying the Euler operator $E = x_1 \partial_{x_1} + x_2 \partial_{x_2} + x_3 \partial_{x_3}$ simply scales the polynomial: $E(f) = 3f$. This corresponds to the scalar matrix $I \in \mathfrak{gl}_3$. In terms of our parametrization, the trace is $\alpha_1+\beta_2+\gamma_3$, which is the sum of the diagonal coefficients of the associated linear vector field. To isolate the non-trivial geometric invariants of the self-attention map, we remove this one-dimensional scaling symmetry and restrict our attention to the $8$-dimensional special linear Lie algebra $\mathfrak{sl}_3$, consisting of trace-zero operators.

\medskip
\noindent{\underline{The Lie algebra flattening matrix.}} We can systematically transform our $10 \times 9$ matrix $M$ into a $10 \times 8$ matrix by changing our basis to the $8$ standard generators of $\mathfrak{sl}_3$. We retain the $6$ off-diagonal elements ($E_{ij} = x_i \partial_{x_j}$ for $i \neq j$). We replace the $3$ diagonal elements with the $2$ trace-zero diagonal generators $H_1 = x_1 \partial_{x_1} - x_3 \partial_{x_3}$ and $H_2 = x_2 \partial_{x_2} - x_3 \partial_{x_3}$. Changing basis and restricting to the trace-zero subspace yields the $10 \times 8$ matrix $M_{\mathrm{Lie}}$ (whose columns correspond to $H_1, H_2, E_{12}, E_{13}, E_{21}, E_{23}, E_{31}, E_{32}$): 
$$
M_{\mathrm{Lie}} = \begin{array}{c|cccccccc}
& H_1 & H_2 & E_{12} & E_{13} & E_{21} & E_{23} & E_{31} & E_{32} \\
\hline
x_1^3     & a  & 0  & d  & i  & 0  & 0  & 0  & 0 \\
x_1^2x_2  & 2d & d  & 2g & 2j & a  & i  & 0  & 0 \\
x_1^2x_3  & i  & -i & 2j & 2f & 0  & 0  & a  & d \\
x_1x_2^2  & g  & 2g & b  & e  & 2d & 2j & 0  & 0 \\
x_1x_2x_3 & 0  & 0  & 2e & 2h & 2i & 2f & 2d & 2g \\
x_1x_3^2  & -f & -2f& h  & c  & 0  & 0  & 2i & 2j \\
x_2^3     & 0  & b  & 0  & 0  & g  & e  & 0  & 0 \\
x_2^2x_3  & -e & e  & 0  & 0  & 2j & 2h & g  & b \\
x_2x_3^2  & -2h& -h & 0  & 0  & f  & c  & 2j & 2e \\
x_3^3     & -c & -c & 0  & 0  & 0  & 0  & f  & h
\end{array}
$$

\medskip
\noindent {\underline{Trace-zero syzygy and vanishing minors.}} The non-trivial linear syzygy we established earlier survives this restriction to the trace-zero subspace. To see this, we can compute the trace of the corresponding differential operator $D = \alpha \partial_{x_1} + \beta \partial_{x_2} + \gamma \partial_{x_3}$. The trace of $D$ as an element of $\mathfrak{gl}_3$ is given by its divergence: $\operatorname{Tr}(D) = \frac{\partial \alpha}{\partial x_1} + \frac{\partial \beta}{\partial x_2} + \frac{\partial \gamma}{\partial x_3}$. Recall our explicit coefficients for the syzygy:
\begin{align*}
    \alpha &= L_{13}\ell_{x_2} - L_{12}\ell_{x_3}, \quad\quad\quad   
    \beta = -L_{13}\ell_{x_1}, \quad\quad\quad  
    \gamma = L_{12}\ell_{x_1}
\end{align*}

\noindent Expanding $\alpha$ using the definitions $L_{12} = \ell_{x_2}q_{x_1} - \ell_{x_1}q_{x_2}$ and $L_{13} = \ell_{x_3}q_{x_1} - \ell_{x_1}q_{x_3}$, we find:
\begin{equation*}
    \alpha = (\ell_{x_3}q_{x_1} - \ell_{x_1}q_{x_3})\ell_{x_2} - (\ell_{x_2}q_{x_1} - \ell_{x_1}q_{x_2})\ell_{x_3} = \ell_{x_1}(\ell_{x_3}q_{x_2} - \ell_{x_2}q_{x_3}) . 
\end{equation*}

\noindent Applying the same expansion to $\beta$ and $\gamma$ yields a highly symmetric set of coefficients: 
\begin{align*}
    \alpha &= \ell_{x_1}(\ell_{x_3}q_{x_2} - \ell_{x_2}q_{x_3}), \quad \quad 
    \beta = \ell_{x_1}(\ell_{x_1}q_{x_3} - \ell_{x_3}q_{x_1}), \quad \quad 
    \gamma = \ell_{x_1}(\ell_{x_2}q_{x_1} - \ell_{x_1}q_{x_2})
\end{align*}

\noindent Because $\ell$ is linear and $q$ is quadratic, the derivatives $\ell_{x_i}$ and $q_{x_i x_j}$ are constants. Taking the partial derivatives of these coefficients gives:

\[
\scalebox{1}{$
\begin{aligned}
\frac{\partial \alpha}{\partial x_1}
&= \ell_{x_1}(\ell_{x_3}q_{x_1 x_2} - \ell_{x_2}q_{x_1 x_3}),
\quad
\frac{\partial \beta}{\partial x_2}
= \ell_{x_1}(\ell_{x_1}q_{x_2 x_3} - \ell_{x_3}q_{x_1 x_2}),
\quad
\frac{\partial \gamma}{\partial x_3}
= \ell_{x_1}(\ell_{x_2}q_{x_1 x_3} - \ell_{x_1}q_{x_2 x_3}).
\end{aligned}
$}
\]

\noindent Summing these three terms, all mixed partial derivatives cancel out,~giving~$\operatorname{Tr}(D) = 0$. Because the constructed syzygy has trace zero, the operator $D$ naturally resides in the $8$-dimensional special linear Lie algebra $\mathfrak{sl}_3$. Therefore, quotienting out the $1$-dimensional scaling symmetry does not annihilate our solution vector, and the matrix $M_{\mathrm{Lie}}$ must drop rank. So, $M_{\mathrm{Lie}}$ is a $10\times 8$ matrix for which all $\binom{10}{8} = 45$ maximal $8\times 8$ minors must vanish. 

A \texttt{Macaulay2} \cite{M2} computation shows that $J$ is fully generated by 35 degree-8 such minors; the remaining maximal minors lie in the ideal they generate. Thus, in this example, the Lie algebra flattening invariants generate
the full defining ideal $J$ and hence characterize the attention variety
ideal-theoretically. The cross-column invariant families do not arise
when $t=1$, while \Cref{prop:rank-bounds} gives no additional
determinantal constraints since $2a+1=7>d=3$.
\end{example}

\section{Cross-row relations}
\label{app:cross-row}

In this article we mainly study \emph{rowwise} invariants, obtained after reducing to the case $d'=1$. 
For general $d'$, however, the attention variety also satisfies polynomial relations coupling different output rows. 
The reason is that all output rows depend on the same attention matrix $A=K^\top Q$, while the dependence on the value matrix $V$ is linear in the chosen row. 

This is already visible in the coefficient formulas of \Cref{prop:sums-monomials}. For each fixed monomial label, the corresponding scaled coefficient is a linear form in the entries of the relevant row of $V$, with coefficients determined by $A$ alone. As a result, any collection of coefficients taken across output rows factors through the common matrix~$V$.

\begin{proposition}[Cross-row determinantal relations]
Fix an output column $j\in[t]$, and let $S$ be any finite collection of ambient coordinates among the variables $y^{[i]}_j(\mathcal K)$ and $y^{[i]}_{n,j}(\mathcal A,b)$. Let $C_S(W)\in \mathbb R^{d'\times |S|}$ be the matrix whose $(i,s)$-entry is the scaled coefficient indexed by the coordinate $s\in S$ in output row $i$. Then there exists a matrix $M_S(A)\in \mathbb R^{d\times |S|}$, depending only on the attention matrix $A$, such that $C_S(W)=V\,M_S(A)$. Consequently, $\rank C_S(W)\le d$, and all $(d+1)\times(d+1)$ minors of $C_S(W)$ vanish on the attention variety.
\end{proposition}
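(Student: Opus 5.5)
The plan is to read off the factorization directly from the coefficient formulas of \Cref{prop:sums-monomials}, passing to scaled coefficients via (\ref{eq:scaled-coefficients}). Each ambient coordinate $s\in S$ is either single-column, indexed by $\mathcal K\in\operatorname{Mult}_3([d])$, or cross-column, indexed by $(\mathcal A,b)$. In either case the scaled coefficient in output row $i$ has the form $\sum_{k=1}^d v_{ik}\,g_{s,k}(A)$, where the $g_{s,k}(A)$ are linear in the entries of $A$ and do not depend on $i$.

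Concretely, for a single-column coordinate we set
\[
(M_S(A))_{k,s}=\frac{1}{|\operatorname{Perm}(\mathcal K)|}\sum_{\substack{(p_1,p_2,p_3)\in\operatorname{Perm}(\mathcal K)\\ p_1=k}} a_{p_2p_3},
\]
and for a cross-column coordinate we set
\[
(M_S(A))_{k,s}=\frac{1}{|\operatorname{Perm}(\mathcal A)|}\sum_{\substack{(p_1,p_2)\in\operatorname{Perm}(\mathcal A)\\ p_1=k}} a_{p_2b},
\]
with empty sums equal to zero. We then group the sums in \Cref{prop:sums-monomials} according to the value $p_1=k$ of the index carried by $v_{ip_1}$. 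This gives $y^{[i]}_s(W)=\sum_k v_{ik}(M_S(A))_{k,s}$, which is exactly $C_S(W)=V\,M_S(A)$. The key observation is that $M_S(A)$ depends only on $A$ and on the label $s$, never on the row $i$. This is the content of \Cref{rmk:reduction-d'=1}.

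The rank bound then follows because $V\in\mathbb R^{d'\times d}$ forces $\rank C_S(W)\le\rank V\le d$. So every $(d+1)\times(d+1)$ minor of the formal matrix of ambient variables $C_S(y)$ vanishes on $\operatorname{im}(\mu)$. Since these minors are polynomials, they also vanish on the Zariski closure $\mathcal V$. The statement is nonvacuous only when $d'\ge d+1$ and $|S|\ge d+1$; otherwise no such minors exist, and the claim holds trivially.

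The only step needing care is checking that the scaling factors and the regrouping by $p_1$ are consistent when $\mathcal K$ or $\mathcal A$ has repeated entries. Since the normalization $1/|\operatorname{Perm}|$ is independent of $i$, it simply folds into $M_S(A)$. A possible subtlety is whether the fixed column $j$ matters; by the sequence-copy relations it does not, so the argument applies verbatim for any $j$, and $S$ could even mix columns.
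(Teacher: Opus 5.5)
Your proposal is correct and takes essentially the same approach as the paper: both read off from \Cref{prop:sums-monomials} that each scaled coefficient in row $i$ has the form $\sum_{p} v_{ip}\,m_{p,s}(A)$ with $m_{p,s}$ independent of $i$, collect these into $M_S(A)$, and deduce the rank bound from $C_S(W)=V\,M_S(A)$. Your version is only more explicit: it writes out the entries of $M_S(A)$ by grouping on $p_1$, and it spells out the passage from $\operatorname{im}(\mu)$ to its Zariski closure, which the paper leaves implicit.
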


\begin{proof}
By \Cref{prop:sums-monomials}, every unscaled coefficient is a sum of terms of the form $a_{\bullet\bullet}v_{ip}$. After dividing by the appropriate orbit size, the same remains true for the scaled coefficients. Thus, for each chosen coordinate $s\in S$, the corresponding entry in output row $i$ has the form
\[
\sum_{p=1}^d v_{ip}\,m_{p,s}(A),
\]
where the coefficients $m_{p,s}(A)$ depend only on $A$. Collecting these coefficients over all $s\in S$ gives a matrix $M_S(A)\in\mathbb R^{d\times |S|}$ such that
$
C_S(W)=V\,M_S(A).
$
The rank bound and the vanishing of all $(d+1)\times(d+1)$ minors follow immediately.
\end{proof}

These equations give a basic family of cross-row invariants. We record them here for completeness; a systematic study of their interaction with the rowwise invariant families developed in the main body of the paper is left for future~work.

\end{document}